\newtheorem{theorem}{Theorem}[section]
\newtheorem{lemma}[theorem]{Lemma}
\newtheorem{proposition}[theorem]{Proposition}
\newtheorem{definition}[theorem]{Definition}
\newtheorem{remark}[theorem]{Remark}
\newtheorem*{remark*}{Remark}
\newtheorem{conjecture}[theorem]{Conjecture}
\Crefname{conjecture}{Conjecture}{Conjectures}
\theoremstyle{plain}
\theoremstyle{plain}
\newcommand{\N}{\mathbb{N}}
\newcommand{\Z}{\mathbb{Z}}
\newcommand\smod[1]{\ \left(\operatorname{mod} #1\right)}
\newcommand{\R}{\mathbb{R}}
\newcommand{\C}{\mathbb{C}}
\newcommand{{\D}}{\delta}
\newcommand{\eps}{\varepsilon}
\newcommand{\SL}{\operatorname{SL}}
\newcommand{\SLZ}{\SL_2(\Z)}
\newcommand{\abcd}{\left(\begin{smallmatrix} a & b \\ c & d \end{smallmatrix}\right)}
\newcommand{\bigabcd}{\begin{pmatrix} a & b \\ c & d \end{pmatrix}}
\newcommand{\calF}{\mathcal{F}}
\newcommand{\tr}{\operatorname{trace}}
\newcommand{\bbM}{\mathbb{M}}
\newcommand{\str}{\operatorname{strace}}
\newcommand{\dodd}{\delta_{odd}}
\newcommand{\calT}{\mathcal{T}}
\newcommand{\mult}{\operatorname{mult}}
\newcommand{\HH}{\mathfrak{H}}
\newcommand{\fa}{\mathfrak{a}}
\newcommand{\lcm}{\operatorname{lcm}}
\newcommand{\pr}{\operatorname{pr}}
\renewcommand{\t}{\mathbf{t}}
\numberwithin{equation}{section}
\numberwithin{table}{section}
\author{Michael J. Griffin and Michael H. Mertens}
\address{Department of Mathematics, Princeton University, Fine Hall, Washington Rd
Princeton, NJ 08544  }
\email{mjgriff1728@gmail.com }
\address{Department of Mathematics and Computer Science, Emory University, 400 Dowman Drive, Atlanta, GA 30322} 
\email{michael.mertens@emory.edu}
\subjclass[2010]{11F22, 11F37}
\title{A proof of the Thompson Moonshine Conjecture}
\begin{document}
\begin{abstract}
In this paper we prove the existence of an infinite dimensional graded super-module for the finite sporadic Thompson group $Th$ whose McKay-Thompson series are weakly holomorphic modular forms of weight $\frac 12$ satisfying properties conjectured by Harvey and Rayhaun.
\end{abstract}

\maketitle

\section{Introduction and statement of results}

One of the greatest accomplishments of $20^{\rm th}$ century mathematics was certainly the classification of finite simple groups. The study of the representation theory of one of these groups, the \emph{Monster group} $\bbM$, the largest of the 26 sporadic simple groups, revealed an intriguing connection to modular forms: McKay and Thompson \cite{Thompson} were the first to observe that the dimensions of irreducible representations of the Monster group are closely related to Klein's modular invariant
\[J(\tau)=q^{-1}+\sum_{n=1}j_nq^n=q^{-1}+196\,884q+21\,493\,760q^2+864\,299\,970q^3+O(q^4),\]
the Hauptmodul for the full modular group. Here and throughout, $\tau=u+iv$, $u,v\in\R$, is a variable living in the complex upper half-plane $\HH$ and $q:=e^{2\pi i\tau}$. More precisely, the first irreducible representations of $\bbM$ have dimensions \[\chi_1(1)=1,\ \chi_2(1)=196\,883,\ \chi_3(1)=21\,296\,876,\ \chi_4(1)=842\,609\,326,\]
so that one has (for example)
\begin{align*}
j_1&=\chi_1(1)+\chi_2(1)\\
j_2&=\chi_1(1)+\chi_2(1)+\chi_3(1)\\
j_3&=2\chi_1(1)+2\chi_2(1)+\chi_3(1)+\chi_4(1).
\end{align*}
Thompson  further observed that a similar phenomenon occurs when one considers combinations of $\chi_j(g)$ for other $g\in\bbM.$ Based on theses observations, he conjectured in \cite{Thompson2} that there should exist an infinite dimensional graded $\bbM$-module that reflects these combinations. Conway and Norton \cite{CN79} made this more precise (The so-called \emph{Monstrous Moonshine} conjecture), conjecturing that for each conjugacy class of $\bbM$ there is an explicit associated genus $0$ subgroup of $\SL_2(\R)$ whose normalized Hauptmodul coincides with the so-called \emph{McKay--Thompson series} (see \Cref{mckayth} for a definition of this term) of the conjugacy class with respect to the module.
 An abstract proof of this conjecture (i.e. one whose construction of the module is done only implicitly in terms of the McKay--Thompson series) was announced by Atkin, Fong and Smith \cite{Fong,Smith}. Their proof was based on an idea of Thompson. Later, the full conjecture was proven by Borcherds \cite{Borcherds} using a vertex operator algebra previously constructed by Frenkel, Lepowsky, and Meurman \cite{FLM}.

Conway and Norton also observed in \cite{CN79} that Monstrous Moonshine would imply Moonshine phenomena for various subgroups of the Monster. Queen \cite{Queen} computed the Hauptmoduln associated to conjugacy classes of several sporadic groups, among them the Thompson group. Note however, that the Moonshine phenomenon we prove in this paper is not directly related to this generalized moonshine considered by Queen, but more reminiscent of the following. In 2011, Eguchi, Ooguri, and Tachikawa \cite{EOT} observed connections like the ones between the dimensions of irreducible representations of the Monster group and coefficients of the modular function $J$ for the largest Mathieu group $M_{24}$ and a certain weight $\tfrac 12$ \emph{mock theta function}. Cheng, Duncan, and Harvey \cite{CDH1,CDH2} generalized this to Moonshine for groups associated to the 23 Niemeier lattices, the non-isometric even unimodular root lattices in dimension $24$, which has become known as the \emph{Umbral Moonshine Conjecture}. Gannon proved the case of Mathieu moonshine in \cite{Gannon}, the full Umbral Moonshine Conjecture was then proved in \cite{DGO} by Duncan, Ono, and the first author.

In \cite{Harvey}, Harvey and Rayhaun conjecture Moonshine for the \emph{Thompson group} $Th$, a sporadic simple group of order 
\[90\,745\,943\,887\,872\,000= 2^{15} \cdot 3^{10} \cdot 5^3 \cdot 7^2 \cdot 13 \cdot 19 \cdot 31\]
(see \Cref{secSuper} for  definitions and notation).

\begin{conjecture}\label{conj:main}
There exists a graded $Th$-supermodule 
\[W=\bigoplus_{\substack{m=-3 \\ m\equiv 0,1\smod 4}}^\infty W_m,\]
where for $m\geq 0$ the graded component $W_m$ has vanishing odd part if $m\equiv 0\pmod 4$ and vanishing even part if $m\equiv 1\pmod 4$, such that for all $g\in Th$ the McKay-Thompson series
\[\calT_{[g]}(\tau):=\sum_{\substack{m=-3 \\ m\equiv 0,1\smod 4}}^\infty \str_{W_m}(g)q^m\]
is a specifically given weakly holomorphic modular form (see \Cref{secPrelim}) of weight $\tfrac 12$ in the Kohnen plus space.
\end{conjecture}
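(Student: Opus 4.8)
The plan is to follow the strategy that established Mathieu moonshine \cite{Gannon} and umbral moonshine \cite{DGO}: exhibit explicit candidate McKay--Thompson series, prove that the virtual $Th$-representations they encode are genuine (non-negative integer combinations of irreducibles), and then read off the supermodule $W$ abstractly. \textbf{Step 1 (the candidate forms).} For each of the finitely many conjugacy classes $[g]$ of $Th$, the function $\calT_{[g]}$ prescribed by Harvey and Rayhaun \cite{Harvey} is characterized as the unique weakly holomorphic modular form of weight $\tfrac12$ in the Kohnen plus space of level $4N_{[g]}$ (with $N_{[g]}$ essentially $\operatorname{ord}(g)$ and the appropriate quadratic multiplier), holomorphic at every cusp except $\infty$, where it has the prescribed principal part $\str_{W_{-3}}(g)\,q^{-3}$. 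One first checks existence and uniqueness: by the duality between the plus space $M^{!}_{1/2}$ and the finite-dimensional space of weight-$\tfrac32$ cusp forms in the complementary plus space (the half-integral-weight analogue of the Zagier/Borcherds pairing, i.e.\ Serre duality on the relevant modular curve), both reduce to a finite linear-algebra computation with principal parts. One simultaneously records that each $\calT_{[g]}$ has all Fourier coefficients in $\Z$, matching the integers $\str_{W_m}(g)$ demanded by the conjecture; concretely the $\calT_{[g]}$ may be realized as Maass--Poincar\'e (Rademacher) series or as explicit combinations of standard theta- and eta-quotients projected into the plus space.

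\textbf{Step 2 (virtual module and integrality).} Let $\chi_1,\dots,\chi_r$ be the irreducible characters of $Th$, write $c_g(m)$ for the $m$-th Fourier coefficient of $\calT_{[g]}$, and set
\[\calF_i(\z):=\sum_{[g]}\frac{\overline{\chi_i(g)}}{|C_{Th}(g)|}\,\calT_{[g]}(\z)=\sum_m m_i(m)\,q^m,\qquad m_i(m)=\frac{1}{|Th|}\sum_{g\in Th}\overline{\chi_i(g)}\,c_g(m).\]
Each sum over $[g]$ is Galois-stable, so $\calF_i$ is again a weakly holomorphic weight-$\tfrac12$ form in a plus space of some level and $m_i(m)\in\Q$. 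By orthogonality of characters, the conjecture is equivalent to the assertion that, for every $m$, the virtual character $\sum_i m_i(m)\chi_i$ of $Th$ is, up to the global sign $\eps_m:=(-1)^m$ (which is $+1$ for $m\equiv 0$ and $-1$ for $m\equiv 1\smod 4$), an honest character --- equivalently $\eps_m m_i(m)\in\Z_{\ge 0}$ for all $i,m$ --- in which case $W_m$ is taken to be the corresponding representation, placed in the even part if $\eps_m=1$ and in the odd part if $\eps_m=-1$; this forced sign alternation is exactly why a \emph{super}module, rather than an ordinary module, is the right object. Integrality ($m_i(m)\in\Z$) follows, since the class function $g\mapsto c_g(m)$ is integer-valued by Step 1, from the standard criterion used in \cite{Gannon,DGO} for such a class function to be a virtual character, once one verifies congruences $c_{g^p}(m)\equiv c_g(m)\smod{p^\lambda}$ for primes $p$ and appropriate exponents $\lambda$; these in turn follow from the Hecke-type ($U_p$ and $V_p$) relations expressing $\calT_{[g^p]}$ through $\calT_{[g]}$ together with the integrality of the individual coefficients.

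\textbf{Step 3 (positivity, and the main difficulty).} It remains to prove $\eps_m m_i(m)\ge 0$. For $m$ large this is forced by asymptotics: by the circle method --- equivalently, by exact Rademacher-type expansions for the Fourier coefficients of weakly holomorphic weight-$\tfrac12$ forms --- the coefficient $c_g(m)$ is controlled by the contribution of the principal part of $\calT_{[g]}$ at $\infty$ (and the images of the remaining cusps), and among all $g$ the principal part of $\calT_{[e]}$ dominates in exponential order. Hence $\eps_m m_i(m)=\frac{\chi_i(1)}{|Th|}\,\eps_m c_e(m)+(\text{lower order})$ with $\eps_m c_e(m)>0$, so $\eps_m m_i(m)>0$ once $m$ exceeds an effective bound, and the finitely many remaining $m$ are checked directly from the explicit $q$-expansions. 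I expect this step to be the crux: weight $\tfrac12$ is the borderline case in which the Rademacher series converges only conditionally, so the half-integral-weight Kloosterman (Sali\'e) sums and the Bessel-factor asymptotics must be estimated carefully and \emph{uniformly} over all the levels $4N_{[g]}$ and all their cusps, with constants sharp enough that only a computationally feasible range of $m$ is left to verify. As an alternative one may multiply the $\calT_{[g]}$ by a Jacobi theta series, or apply the Shimura lift, to pass to integral weight, at the cost of bookkeeping the plus-space and cusp contributions there.
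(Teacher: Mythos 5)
Your three-step plan (Rademacher/Poincar\'e construction of the candidate series, character orthogonality plus coefficient congruences for integrality of the multiplicities, and Gannon-style Kloosterman/Bessel estimates with a finite computational check for positivity) is exactly the architecture of the paper's proof. The genuine gap is in your Step 2: you assert that the needed congruences $c_{g^p}(m)\equiv c_g(m)\pmod{p^\lambda}$ ``follow from the Hecke-type ($U_p$ and $V_p$) relations expressing $\calT_{[g^p]}$ through $\calT_{[g]}$.'' No such replication formulas are available for these weight-$\tfrac 12$ forms: unlike the Hauptmoduln of Monstrous Moonshine, the series $\calT_{[g]}$ are not known to satisfy any $U_p$/$V_p$ relation linking $g$ to $g^p$ (the replicability phenomena in \Cref{secRepl} are empirical observations about associated weight-$0$ functions and play no role in the proof), so this mechanism is unproven and cannot be cited as the source of the congruences. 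Moreover, the congruences actually required to force integrality of all $48$ multiplicity generating functions are substantially more elaborate than two-term power-map congruences: at $p=3$, for instance, one needs relations such as $760[1A]-864[3A]+212[3B]+621[3C]-3^{6}[9C]-49\cdot 3^{6}[\vartheta_9]-3^{9}[\vartheta_{81}]\equiv 0\pmod{3^{10}}$, involving several classes and the theta corrections (see \Cref{secCongruences}). In the paper these congruences are found computationally and then proven by Sturm-bound arguments, after a linear-algebra reduction (the matrices $\mathbf N$, $\mathbf N^*$, $\mathbf M_p$ in \Cref{secModForms}) that is essential because the naive route of treating $\calF_{\chi_j}$ as a single form of level $N_{\chi_j}=\lcm\{N_{[g]}:\chi_j(g)\neq 0\}$ (already $2\,778\,572\,160$ for $\chi_1$) is computationally infeasible. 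Without either proving your claimed Hecke-type relations or substituting a proof of a sufficient, complete set of congruences, Step 2 does not close.

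Two smaller inaccuracies in Step 1 are worth flagging. First, the candidate forms are \emph{not} holomorphic at every cusp other than $\infty$: for $|g|$ odd the plus-space projection forces an additional pole of order $\tfrac 34$ at the cusp $\tfrac 1{2|g|}$ (\Cref{plusspace,Rademachercusp}), and this matters both for the uniqueness characterization and for the Bruinier--Funke pairing computation. Second, the uniqueness statement needs care for the classes $12A/12B$ (where the published theta correction transforms with a different multiplier, \Cref{except}) and $36A/36B/36C$ (where one more Fourier coefficient must be prescribed). Also, proving that the Rademacher sums are genuinely weakly holomorphic (vanishing shadow) and that their coefficients are integers are themselves nontrivial steps in the paper, carried out via dimension computations of the spaces $S^{+}_{3/2}$ together with the pairing, and via Sturm bounds against explicit eta-quotient combinations (\Cref{prop:weak,prop:int}); your duality framing amounts to the same finite checks, but they should be stated as such rather than merely ``recorded.''
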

Here, we prove this conjecture.
\begin{theorem}\label{thm:main}
\Cref{conj:main} is true. Moreover, if $g\in Th$ does not lie in the class $12A$ or $12B$ (see \Cref{except}), the McKay-Thompson series $\calT_{[g]}(\tau)$ is the unique weakly holomorphic modular form in $M_\frac 12^{!,+}(4|g|,\psi_{[g]})$ (see \Cref{secPrelim} for notation) satisfying the following conditions:
\begin{itemize}
\item its Fourier expansion is of the form $2q^{-3}+\chi_2(g)+(\chi_4(g)+\chi_5(g))q^4+O(q^5)$, where $\chi_j$ is the $j^{\rm th}$ irreducible character of $Th$ as given in \Crefrange{char1}{char4}, and all its Fourier coefficients are integers.
\item if $|g|$ is odd, then the only other pole of order $\tfrac 34$ is at the cusp $\frac{1}{2|g|}$, otherwise there is only the pole at $\infty$. It vanishes at all other cusps.
\end{itemize}
If $|g|\neq 36$, it suffices to assume that the Fourier expansion is of the form $2q^{-3}+\chi_2(g)+O(q^4)$.
\end{theorem}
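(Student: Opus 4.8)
The plan is to follow the template of the abstract proofs of Monstrous and Umbral Moonshine: instead of building the supermodule $W$ directly, construct its graded trace functions $\calT_{[g]}$ one conjugacy class at a time as explicit weakly holomorphic modular forms, and then prove, using only the Fourier coefficients of these forms, that the resulting formal $q$-series of class functions on $Th$ are genuine (super)characters.

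Step one is the construction together with the uniqueness assertion of \Cref{thm:main}. For each of the finitely many conjugacy classes $[g]$ of $Th$ one must produce an element of $M_\frac 12^{!,+}(4|g|,\psi_{[g]})$ with Fourier expansion $2q^{-3}+\chi_2(g)+O(q^4)$ --- and with $q^4$-coefficient $\chi_4(g)+\chi_5(g)$ in the classes of order $36$ and the classes $12A,12B$ --- having the prescribed pole of order $\tfrac 34$ at $\tfrac{1}{2|g|}$ when $|g|$ is odd, no pole at the finite cusps otherwise, and the pole $2q^{-3}$ at $\infty$. Usable building blocks for the weight $\tfrac 12$ plus space are Jacobi theta functions times modular functions on $\Gamma_0(|g|^2)$ (via the structure of the plus space and the Shimura correspondence), eta quotients, and Rademacher sums attached to the prescribed principal parts; a finite computation in linear algebra over $\Q$ isolates the form with the right principal part, which one then matches against the series written down by Harvey and Rayhaun. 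That such a form exists for a given collection of principal parts is controlled, by Serre duality on the relevant modular curve, by the vanishing of finitely many explicit pairings against the space of weight $\tfrac 32$ cusp forms in the dual plus space; combining this with the (known, theta-series-spanned) dimensions of the holomorphic weight $\tfrac 12$ plus spaces gives the uniqueness statement. The classes $12A,12B$ must be excluded, and the order-$36$ classes require the extra $q^4$-datum, precisely because for those levels the holomorphic plus forms together with the permitted poles leave more than a one-dimensional family once only $2q^{-3}+\chi_2(g)$ and integrality of the coefficients are imposed.

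Step two turns the $\calT_{[g]}$ into the supermodule $W$. For each irreducible character $\chi_i$ of $Th$ (\Crefrange{char1}{char4}) and each $m$, define the candidate multiplicity $a_{m,i}:=\langle c_m,\chi_i\rangle$, where $c_m\colon[g]\mapsto [q^m]\,\calT_{[g]}$; equivalently $\sum_m a_{m,i}q^m=\sum_{[g]}\frac{\overline{\chi_i(g)}}{|C_{Th}(g)|}\calT_{[g]}(\tau)$, again a weakly holomorphic modular form of controlled level. One then needs: (i) $a_{m,i}\in\Z$ for all $m,i$, and (ii) $a_{m,i}\ge 0$ when $m\equiv 0\pmod 4$ and $a_{m,i}\le 0$ when $m\equiv 1\pmod 4$, the sign reflecting the declared parity of $W_m$ for $m\ge 0$ (the single negative index $m=-3$ being unconstrained). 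Integrality in (i) follows from the integrality of the Fourier coefficients of the $\calT_{[g]}$ together with standard arguments --- reduction at the primes dividing $|Th|$, or the congruence conditions under the power maps $g\mapsto g^d$, which hold because of Hecke and Atkin--Lehner relations among the $\calT_{[g]}$ and are of the type used in the earlier moonshine proofs.

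The heart of the matter, and what I expect to be the main obstacle, is the positivity (ii). For all exponents $m$ up to an explicit bound $m_0$ this is a finite --- if sizeable --- verification, carried out by expanding every $\calT_{[g]}$ far enough and applying the orthogonality relations. For $m>m_0$ one needs effective bounds: via the Rademacher/circle-method expansion of the coefficients of the weakly holomorphic forms $\sum_m a_{m,i}q^m$, one shows that the contribution of the pole $2q^{-3}$ at $\infty$ --- a main term of order roughly $e^{c\sqrt m}$ whose sign is forced by $-3\equiv 1\pmod 4$ and the weight-$\tfrac 12$ multiplier system --- strictly dominates, uniformly in $i$, the combined contributions of the poles at the other cusps together with the Kloosterman-sum error. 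Making that domination completely explicit (good bounds on the relevant Kloosterman sums and Bessel functions, and dovetailing the tail estimate with the computer-checked range $m\le m_0$) is the delicate step; everything else is bookkeeping with half-integral weight modular forms and the character table of $Th$.
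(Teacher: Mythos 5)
Your plan reproduces the paper's architecture almost step for step: construct the series as Rademacher sums plus theta corrections, prove weak holomorphy and uniqueness by pairing against weight $\tfrac 32$ plus-space cusp forms (the Bruinier--Funke/Serre duality obstruction) together with dimension computations of the holomorphic weight $\tfrac 12$ plus spaces, reduce integrality of the multiplicities to congruences modulo the primes dividing $|Th|$ proven by Sturm-type bounds, and split positivity into an effective estimate for large index plus a finite verification. The genuine gap sits exactly at the step you call delicate. The coefficient estimates require the value at $s=\tfrac 34$ of the Selberg--Kloosterman zeta function $Z^*_\psi(m,n;s)$ attached to the multipliers $\psi_{N,v,h}$, and this series converges only conditionally there: with the trivial bound $|K_\psi(-3,n,4Nc)|\ll c$ the terms are of size $c^{-1/2}$, so no termwise ``good bounds on the relevant Kloosterman sums and Bessel functions'' can control the tail, and the domination of the $c=1$ main term cannot be established that way. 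The paper's resolution (\Cref{GeneralThm3}) is to rewrite the Kloosterman sums via Kohnen's identity as genus-character sums over binary quadratic forms and to prove an equivariance property of the genus character under the groups $\Gamma_0(N;\hat h,m\hat v)$ (\Cref{General5b}), generalizing Gannon's Lemma 5(b) to the multipliers at hand, after which Gannon's estimates apply with only notational changes. Without this device, or an equivalent one, the positivity argument for $n$ beyond the computational range does not close, and this is precisely the new analytic content of the paper that your proposal leaves unsupplied.

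A smaller inaccuracy: the exclusion of $12A,12B$ from the uniqueness statement is not a dimension phenomenon. By \Cref{except}, the theta correction $-\vartheta(4\tau)+3\vartheta(36\tau)$ transforms with the multiplier $\psi_{12,1,3}$, while the Rademacher sum $Z_{12,\psi_{[g]}}$ transforms with $\psi_{12,7,12}$, so $\calT_{[12AB]}$ does not lie in $M_\frac 12^{!,+}(48,\psi_{[g]})$ at all (both multipliers only become trivial on $\Gamma_0(144)$); it is the order-$36$ classes for which the relevant holomorphic weight-$\tfrac 12$ space is $3$-dimensional and the extra $q^4$ datum is needed, whereas in all other cases it is at most $2$-dimensional. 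This slip does not affect the truth of the conjecture, but the reason you give for the exclusion would not survive the actual computation.
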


The proof of \Cref{thm:main}, like the proofs of the Mathieu and Umbral Moonshine Conjectures, relies on the following idea. For each $n\geq 0$, the function defined by
\[\omega_n:\ Th\rightarrow \C, g\mapsto \alpha_{[g]}(n),\]
where we write
\[\calT_{[g]}(\tau):=\sum_{n=-3}^\infty \alpha_{[g]}(n)q^n\]
for the tentative McKay-Thompson series conjectured by Harvey and Rayhaun, is a complex valued class function on the Thompson group. Therefore we need to show that $(-1)^n\omega_n$ is a character of $Th$ for every $n$, which is equivalent to the assertion that
\begin{equation}\label{eq:mults}
(-1)^n\omega_n(g)=\sum_{j=1}^{48} m_j(n)\chi_j(g),
\end{equation}
where $m_1,...,m_{48}$ are non-negative integers and $\chi_1,...,\chi_{48}$ are the irreducible characters of $Th$ as defined in \Crefrange{char1}{char4}. Using a variant of Brauer's characterization of generalized characters due to Thompson (see, e.g. \cite[Theorem 1.1]{Smith}), one can reduce this to a finite calculation.

The rest of the paper is organized as follows. In \Cref{secPrelim}, we recall some relevant definitions on supermodules, harmonic Maa{\ss} forms, and the construction of the (tentative) McKay-Thompson series in \cite{Harvey}. In \Cref{secModForms}, we show that these series are in fact all weakly holomorphic modular forms (instead of harmonic weak Maa{\ss} forms) with integer Fourier coefficients and that all the multiplicities $m_j$ in \eqref{eq:mults} are integers. \Cref{secPos} is concerned with the proof of the positivity of these multiplicities, which finishes the proof of \Cref{thm:main}. Finally, in \Cref{secRepl}, we give some interesting observations connecting the McKay-Thompson series to replicable functions.

\section*{Acknowledgements}
The authors would like to thank Ken Ono for instigating this project and John Duncan for many invaluable conversations on the subject and helpful comments on an earlier version of this article.  The first author was supported by the National Science Foundation grant DMS-1502390.

\section{Preliminaries and Notation}\label{secPrelim}
\subsection{Supermodules}\label{secSuper}
We begin by introducing the necessary definitions and notations in \Cref{conj:main}.
\begin{definition}
A vector space $V$ is called a \emph{superspace}, if it is equipped with a $\Z/2\Z$-grading $V=V^{(0)}\oplus V^{(1)}$, where $V^{(0)}$ is called the \emph{even} and $V^{(1)}$ is called the \emph{odd} part of $V$. For an endomorphism $\alpha$ of $V$ respecting this grading, i.e. $\alpha(V^{(i)})\subseteq V^{(i)}$, we define its \emph{supertrace} to be
\[\str(\alpha)=\tr\left(\alpha|_{V^{(0)}}\right)-\tr\left(\alpha|_{V^{(1)}}\right).\]
\end{definition}

Now let $G$ be a finite group and $(V,\rho)$ a representation of $G$. If the $G$-module $V$ admits a decomposition into an even and odd part as above which is compatible with the $G$-action, we call $V$ a \emph{$G$-supermodule}. For a $G$-subsupermodule $W$ of $V$ and $g\in G$ we then write 
\[\str_W(g):=\str\left(\rho|_W(g)\right).\]
Note that $\str(g)$ only depends on the conjugacy class of $g$, which we denote by $[g]$. 

\subsection{Harmonic Maa{\ss} forms}
Harmonic Maa{\ss} forms are an important generalization of classical, elliptic modular forms. In the weight $1/2$ case, they are intimately related to the \emph{mock theta functions}, a term coined by Ramanujan in his famous 1920 deathbed letter to Hardy.  It took until the first decade of the $21^{\rm st}$ century before work by Zwegers \cite{Zwegers}, Bruinier-Funke \cite{BF04} and Bringmann-Ono \cite{BO06,BO10} established the ``right'' framework for these enigmatic functions of Ramanujan's, namely that of harmonic Maa{\ss} forms.
 Since then, there have been many applications of harmonic Maa{\ss} forms both in various fields of pure mathematics, see, for instance, \cite{AGOR, BF, BrO, DIT} among many others, and mathematical physics, especially in regards to quantum black holes and wall crossing \cite{DMZ} as well as Mathieu and Umbral Moonshine \cite{CDH1, CDH2, DGO, Gannon}. For a general overview on the subject we refer the reader to \cite{Ono08,ZagierBourbaki}.

Recall the definition of the congruence subgroup
\[\Gamma_0(N):=\left\{\gamma=\bigabcd\in\SLZ : N|c\right\}\]
of the full modular group $\SLZ$.
\begin{definition}
We call a smooth function $f:\HH\rightarrow\C$ a \emph{harmonic (weak)\footnote{We usually omit the word ``weak'' from now on} Maa{\ss} form} of \emph{weight} $k\in\tfrac 12\Z$ of \emph{level} $N$ with \emph{multiplier system} $\psi$, if the following conditions are satisfied:
\begin{enumerate}
\item We have $f|_k\gamma(\tau)=\psi(\gamma)f(\tau)$ for all $\gamma\in\Gamma_0(N)$ and $\tau\in\HH$, where we define
\[f|_k\gamma (\tau):=\begin{cases} (c\tau+d)^{-k}f\left(\frac{a\tau+b}{c\tau+d}\right) & \text{if }k\in\Z \\
\left(\left(\frac{c}{d}\right)\eps_d\right)^{2k}\left(\sqrt{c\tau+d}\right)^{-2k}f\left(\frac{a\tau+b}{c\tau+d}\right) & \text{if }k\in\frac 12+\Z.
\end{cases}\]
with 
\[\eps_d:=\begin{cases} 1 & d\equiv 1\pmod{4},\\ i & d\equiv 3\pmod 4.\end{cases}\]
and where we assume $4|N$ if $k\notin\Z$.
\item The function $f$ is annihilated by the \emph{weight $k$ hyperbolic Laplacian},
\[\Delta_k f:=\left[-v^2\left(\frac{\partial^2}{\partial u^2}+\frac{\partial^2}{\partial v^2}\right)+ikv\left(\frac{\partial}{\partial u}+i\frac{\partial}{\partial v}\right)\right] f\equiv 0.\]
\item There is a polynomial $P(q^{-1})$ such that $f(\tau)-P(e^{-2\pi i\tau})=O(e^{-cv})$ for some $c>0$ as $v\to\infty$. Analogous conditions are required at all cusps of $\Gamma_0(N)$.
\end{enumerate}
We denote the space of harmonic Maa{\ss} forms of weight $k$, level $N$ and multiplier $\psi$ is denoted by $H_k(N,\psi)$, where we omit the multiplier if it is trivial.
\end{definition}
\begin{remark}\label{remark}
\begin{enumerate}
\item Obviously, the weight $k$ hyperbolic Laplacian annihilates holomorphic functions, so that the space $H_k(N,\psi)$ contains the spaces $S_k(N,\psi)$ of cusp forms (holomorphic modular forms vanishing at all cusps), $M_k(N,\psi)$ of holomorphic modular forms, and $M_k^!(N,\psi)$ of weakly holomorphic modular forms (holomorphic functions on $\HH$ transforming like modular forms with possible poles at cusps).
\item It should be pointed out that the definition of modular forms resp. harmonic Maa{\ss} forms with multiplier is slightly different in \cite{Harvey}, where the multiplier is included into the definition of the slash operator $f|_k\gamma$, so that multipliers here are always the inverse of the multipliers there.
\end{enumerate}
\end{remark}
It is not hard to see from the definition that harmonic Maa{\ss} forms naturally split into a holomorphic part and a non-holomorphic part (see for example equations (3.2a) and (3.2b) in \cite{BF04}).
\begin{lemma}\label{lem:split}
Let $f\in H_{k}(N,\psi)$ be a harmonic Maa{\ss} form of weight $k\neq 1$ such that $\psi(\left(\begin{smallmatrix} 1 & 1 \\ 0 & 1\end{smallmatrix}\right))=1$. Then there is a canonical splitting
\begin{equation}\label{eq:split}
f(\tau)=f^+(\tau)+f^-(\tau),
\end{equation}
where for some $m_0\in\Z$ we have the Fourier expansions
\[f^+(\tau):=\sum\limits_{n=m_0}^\infty c_f^+(n)q^n,\]
and
\[f^-(\tau):=\sum\limits_{\substack{n=1}}^\infty \overline{c_f^-(n)}n^{k-1}\Gamma(1-k;4\pi nv)q^{-n},\]
where $\Gamma(\alpha;x)$ denotes the usual incomplete Gamma-function.
\end{lemma}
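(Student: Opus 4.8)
The plan is to run the standard separation-of-variables argument in the Fourier expansion of $f$, exactly as in Bruinier--Funke \cite{BF04}. First, since $\psi(\left(\begin{smallmatrix}1&1\\0&1\end{smallmatrix}\right))=1$, condition (1) of the definition gives $f(\tau+1)=f(\tau)$, so that $f$, being smooth, has a Fourier expansion $f(\tau)=\sum_{n\in\Z}b_n(v)q^n$ whose coefficients $b_n$ are smooth functions of $v=\Im\tau$ alone. Because $\Delta_k f=0$ and $\Delta_k$ is an elliptic operator, $f$ is real-analytic, so this series together with all of its term-by-term derivatives converges locally uniformly on $\HH$, and one may apply $\Delta_k$ summand by summand.

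Next I would substitute the expansion into $\Delta_k f=0$. A direct computation shows that $\Delta_k\!\left(b_n(v)q^n\right)=0$ if and only if
\[v^2 b_n''(v)+(k-4\pi n v)\,v\,b_n'(v)=0,\]
equivalently $b_n'(v)=C\,v^{-k}e^{4\pi n v}$ for some constant $C$. For $n=0$ this integrates (here the hypothesis $k\neq 1$ is used) to $b_0(v)=c_f^+(0)+c'v^{1-k}$; for $n\neq 0$ the solution space is spanned by the constant function, which contributes a holomorphic mode $c_f^+(n)q^n$, and by an antiderivative of $v^{-k}e^{4\pi n v}$. When $n<0$ the convergent choice $-\int_v^\infty t^{-k}e^{4\pi n t}\,dt$ of this antiderivative equals, after the substitution $t\mapsto t/(4\pi|n|)$ and up to a constant multiple, $|n|^{k-1}\Gamma(1-k;4\pi|n|v)$; since $q^{-|n|}=e^{2\pi|n|v}e^{-2\pi i|n|u}$, the associated Fourier mode is a constant multiple of $|n|^{k-1}\Gamma(1-k;4\pi|n|v)q^n$.

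I would then invoke the growth condition (3) at the cusp $\infty$ to discard the unwanted solutions. For $n>0$ the nonconstant solution, after multiplication by $q^n$, grows like $v^{-k}e^{2\pi n v}$, which is incompatible with $f(\tau)-P(q^{-1})=O(e^{-cv})$ (the frequency $e^{2\pi i n u}$ does not occur in the polynomial $P$), so its coefficient vanishes; likewise $v^{1-k}$ decays at most polynomially and is therefore not $O(e^{-cv})$, forcing $c'=0$. The holomorphic modes $c_f^+(n)q^n$ with $n<0$ grow exponentially, so by the polynomiality of the principal part $P$ they occur for only finitely many $n$, i.e.\ $c_f^+(n)=0$ for $n$ below some $m_0\in\Z$. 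Collecting the surviving modes gives $f=f^++f^-$ with $f^+=\sum_{n\geq m_0}c_f^+(n)q^n$ and $f^-=\sum_{n\geq 1}\overline{c_f^-(n)}\,n^{k-1}\Gamma(1-k;4\pi n v)q^{-n}$, after absorbing the numerical factor $(4\pi)^{k-1}$ into the coefficients and naming them, following the usual convention, $\overline{c_f^-(n)}$. The splitting is canonical because the two families of Fourier modes are intrinsically distinguished: those of $f^+$ are annihilated by $\partial/\partial\bar\tau$, whereas those of $f^-$ are not.

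I do not expect a genuine obstacle: once the separation of variables is set up, the proof is precisely the elementary ODE analysis above. The only point needing a word of justification is the real-analyticity of $f$ (via ellipticity of $\Delta_k$), which legitimizes applying $\Delta_k$ term by term to the Fourier series; this, together with the bookkeeping of which solutions survive condition (3), is the entire content, which is why the lemma is quoted as standard from \cite{BF04}.
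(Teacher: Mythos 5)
Your argument is correct and is precisely the standard separation-of-variables proof from Bruinier--Funke that the paper itself invokes (it cites equations (3.2a)--(3.2b) of \cite{BF04} instead of giving a proof): the ODE $v^2b_n''+(k-4\pi nv)vb_n'=0$, its two-dimensional solution space, and the use of the growth condition (3) to kill the $v^{1-k}$ term at $n=0$ and the exponentially growing solutions at $n>0$ all check out, with $k\neq 1$ entering exactly where you say. The only cosmetic remark is that term-by-term application of $\Delta_k$ can be avoided by differentiating the coefficient integrals $\int_0^1 f(u+iv)e^{-2\pi inu}\,du$ under the integral sign, but your route via ellipticity is equally valid.
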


In the theory of harmonic Maa{\ss} forms, there is a very important differential operator that associates a weakly holomorphic modular form to a harmonic Maa{\ss} form \cite[Proposition 3.2 and Theorem 3.7]{BF04}, often referred to as its \emph{shadow}\footnote{In the literature, the shadow is often rather associated to the holomorphic part $f^+$ of a harmonic Maa{\ss} form $f$ rather than to $f$ itself.}.
\begin{proposition}
The operator
\[\xi_{k}:H_{k}(N,\psi)\rightarrow S_{2-k}\left(N,\overline{\psi}\right),\:f\mapsto\xi_{k}f:=2iv^{k}\overline{\frac{\partial f}{\partial\overline{\tau}}}\]
is well-defined and surjective with kernel $M_{k}^!(N,\nu)$. Moreover, we have that
\[(\xi_{k}f)(\tau)=-(4\pi)^{1-k}\sum\limits_{n=1}^\infty c_f^-(n)q^n\]
and we call this cusp form the \emph{shadow} of $f$.
\end{proposition}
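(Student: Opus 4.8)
\emph{Proof proposal.} The plan is to build everything on the factorization of the weight-$k$ Laplacian as $\Delta_k = -\xi_{2-k}\circ\xi_k$, which I would establish first by a direct computation. Writing $\partial_\tau = \tfrac12(\partial_u - i\partial_v)$ and $\partial_{\bar\tau} = \tfrac12(\partial_u + i\partial_v)$, one rewrites the operator as $\Delta_k = -4v^2\partial_\tau\partial_{\bar\tau} + 2ikv\partial_{\bar\tau}$, and using $\overline{\partial_{\bar\tau}f} = \partial_\tau\bar f$ together with $\partial_{\bar\tau}v = i/2$ a short calculation gives $\xi_{2-k}\xi_k f = 4v^2\partial_\tau\partial_{\bar\tau}f - 2ikv\partial_{\bar\tau}f = -\Delta_k f$. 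This single identity drives the holomorphy of the image and the kernel statement.

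For well-definedness I would verify three things. (i) Transformation: a direct check that $\xi_k$ intertwines the weight-$k$ and weight-$(2-k)$ slash actions, so that if $f|_k\gamma = \psi(\gamma)f$ then $(\xi_k f)|_{2-k}\gamma = \overline{\psi(\gamma)}\,\xi_k f$ (the conjugation arising because $\xi_k$ is conjugate-linear); thus $\xi_k f$ transforms with multiplier $\overline{\psi}$ in weight $2-k$. (ii) Holomorphy: since $f$ is harmonic, $\xi_{2-k}(\xi_k f) = -\Delta_k f = 0$, and as $\xi_{2-k}h = 2iv^{2-k}\overline{\partial_{\bar\tau}h}$ vanishes only when $\partial_{\bar\tau}h = 0$, the form $\xi_k f$ is holomorphic on $\HH$. (iii) Cuspidality together with the explicit formula: inserting the expansion $f = f^+ + f^-$ from \Cref{lem:split} and using $\tfrac{d}{dx}\Gamma(s;x) = -x^{s-1}e^{-x}$, the holomorphic part is annihilated and the term $\overline{c_f^-(n)}n^{k-1}\Gamma(1-k;4\pi nv)q^{-n}$ maps under $\xi_k$ to $-(4\pi)^{1-k}c_f^-(n)q^n$, the factors $n^{k-1}(4\pi n)^{1-k}$ collapsing to $(4\pi)^{1-k}$. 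This yields both the stated formula $\xi_k f = -(4\pi)^{1-k}\sum_{n\ge1}c_f^-(n)q^n$ and, since the sum starts at $n=1$ and the growth condition at the remaining cusps forces the analogous local expansions to have no constant or polar terms, the fact that $\xi_k f$ is a genuine cusp form in $S_{2-k}(N,\overline{\psi})$.

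The kernel computation is then immediate from holomorphy: $\xi_k f = 0$ iff $\partial_{\bar\tau}f = 0$ iff $f$ is holomorphic on $\HH$, and a holomorphic $f$ satisfying the modularity and the growth condition is exactly a weakly holomorphic modular form, so $\ker\xi_k = M_k^!(N,\psi)$.

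The main obstacle is surjectivity, which I would prove constructively. Given a cusp form $g = \sum_{n\ge1}b(n)q^n \in S_{2-k}(N,\overline{\psi})$, I would produce a preimage via its non-holomorphic Eichler integral $g^\ast(\tau) = \text{(const)}\int_{-\bar\tau}^{i\infty}\overline{g(-\bar z)}(z+\tau)^{-k}\,dz$, normalized so that $\xi_k g^\ast = g$; equivalently, one sets $c_f^-(n) = -(4\pi)^{k-1}b(n)$ and defines the non-holomorphic part by the series of \Cref{lem:split}. The three defining properties of a harmonic Maa\ss\ form must then be checked: annihilation by $\Delta_k$ and the growth at the cusps follow from standard estimates on the incomplete Gamma-function, while the delicate point --- and the crux of the whole proposition --- is the modular transformation of $g^\ast$. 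Here one uses that $g$ is genuinely modular of weight $2-k$ with multiplier $\overline{\psi}$: the error terms in the transformation of the Eichler integral are period integrals of the cusp form $g$, and because $g$ is cuspidal these assemble into the correct automorphy factor up to an entire correction that can be absorbed into a holomorphic part $f^+$. Thus $g^\ast$, possibly after adding a suitable weakly holomorphic form, is an honest element of $H_k(N,\psi)$ with $\xi_k g^\ast = g$, exhibiting every cusp form as a shadow and completing the proof.
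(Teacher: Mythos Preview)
The paper does not supply its own proof of this proposition; it is quoted from Bruinier--Funke \cite[Proposition~3.2 and Theorem~3.7]{BF04} as a black box. So there is no in-paper argument to compare against, and your proposal stands on its own merits.

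Your treatment of well-definedness, the explicit Fourier expansion, and the kernel is correct and is essentially the standard computation: the factorization $\Delta_k=-\xi_{2-k}\circ\xi_k$ gives holomorphy of the image, the termwise differentiation of $f^-$ using $\tfrac{d}{dx}\Gamma(s;x)=-x^{s-1}e^{-x}$ reproduces the claimed formula, and cuspidality follows from the shape of that expansion at every cusp.

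The surjectivity argument, however, has a genuine gap. You correctly observe that the non-holomorphic Eichler integral $g^\ast$ satisfies $\xi_k g^\ast=g$, and that $g^\ast|_k\gamma-\psi(\gamma)g^\ast$ is a \emph{holomorphic} period integral $r_\gamma(\tau)$ for each $\gamma$. But the assertion that this holomorphic cocycle ``can be absorbed into a holomorphic part $f^+$'' is precisely the nontrivial point: you are claiming that the cocycle $\{r_\gamma\}$ is a coboundary, i.e.\ that there exists a holomorphic $f^+$ on $\HH$ with $f^+|_k\gamma-\psi(\gamma)f^+=-r_\gamma$ and with at most polynomial growth in $q^{-1}$ at the cusps. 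This is an Eichler--Shimura--type statement that does not follow from anything you have written, and in half-integral weight it is not even covered by the classical Eichler--Shimura isomorphism. Bruinier and Funke prove surjectivity by entirely different means (a duality/Hodge-theoretic argument in \cite[Theorem~3.7]{BF04}); an alternative constructive route is to span $S_{2-k}(N,\overline{\psi})$ by holomorphic Poincar\'e series and exhibit Maa{\ss}--Poincar\'e series of weight $k$ as explicit preimages. Either of these would close the gap; the Eichler-integral sketch as written does not.
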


The $\xi$-operator can also be used to define the \emph{Bruinier-Funke pairing}
\[\{\cdot,\cdot\}: S_{2-k}(N,\overline{\psi})\times H_k(N,\psi),\quad (g,f)\mapsto \{g, f\}:=\langle g, \xi_k(f)\rangle,\]
where $\langle\cdot,\cdot\rangle$ denotes the Petersson inner product on the space of cusp forms. 

We will later use the following result \cite[Proposition 3.5]{BF04}.
\begin{proposition}\label{BFpairing}
For $f=f^++f^-$ as in \Cref{lem:split} and $g=\sum_{n=1}^\infty b_nq^n\in S_{2-k}(N,\overline{\psi})$ such that $f$ grows exponentially only at the cusp $\infty$ and is bounded at all other cusps of $\Gamma_0(N)$, we have that
\[\{g,f\}=\sum_{n<0} c_f^+(n)b(-n).\]
If $f$ has poles at other cusps, the pairing is given by summing the corresponding terms using the $q$-series expansions for $f$ and $g$ at each such cusp.
\end{proposition}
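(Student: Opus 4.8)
The plan is to unwind the definition of the pairing and reduce it to a boundary integral via Stokes' theorem. Writing $\langle\cdot,\cdot\rangle$ for the Petersson inner product on $S_{2-k}(N,\overline\psi)$ and $\calF$ for a fundamental domain of $\Gamma_0(N)$, I would first substitute $\xi_k f=2iv^k\overline{\partial f/\partial\overline\tau}$ into $\{g,f\}=\langle g,\xi_kf\rangle$. Since $\overline{\xi_kf}=-2iv^k\,\partial f/\partial\overline\tau$, and the weight factor $v^{2-k}$ together with the hyperbolic measure $v^{-2}\,du\,dv$ exactly cancels the remaining $v^k$, this gives
\[\{g,f\}=-2i\int_\calF g(\tau)\,\frac{\partial f}{\partial\overline\tau}\,du\,dv.\]

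Next I would recognize the integrand as an exact form. Because $g$ is holomorphic, $\partial g/\partial\overline\tau=0$, so with $\eta:=g(\tau)f(\tau)\,d\tau$ one computes, using $d\overline\tau\wedge d\tau=2i\,du\wedge dv$, that $d\eta=g\,(\partial f/\partial\overline\tau)\,d\overline\tau\wedge d\tau=2i\,g\,(\partial f/\partial\overline\tau)\,du\,dv$, whence $\{g,f\}=-\int_\calF d\eta$. The crucial structural point is that $\eta$ is $\Gamma_0(N)$-invariant: the product $gf$ transforms with weight $(2-k)+k=2$ and trivial multiplier $\overline\psi\psi=1$, and a weight-two form times $d\tau$ is invariant under fractional linear transformations. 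Applying Stokes' theorem to a truncated domain $\calF_T$, obtained by cutting each cusp with a horocycle at parameter $T$, the contributions of the $\Gamma_0(N)$-identified sides of $\partial\calF_T$ therefore cancel in pairs, leaving $\{g,f\}=-\lim_{T\to\infty}\sum_{\mathfrak a}\int_{h_{\mathfrak a}(T)}\eta$, the sum running over the cusps $\mathfrak a$.

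At $\infty$ the horocycle is $\{u+iT:0\le u\le1\}$ with $d\tau=du$; inserting the Fourier expansions of $g$ and of $f=f^++f^-$ from \Cref{lem:split} and using $\int_0^1 e^{2\pi i mu}\,du=\delta_{m,0}$ isolates the resonant terms. The $g\cdot f^+$ product contributes exactly $\sum_{n<0}c_f^+(n)b(-n)$, independent of $T$, while the $g\cdot f^-$ product contributes a sum of terms proportional to $\Gamma(1-k;4\pi nT)$, which tends to $0$ as $T\to\infty$. The interchange of limit and summation here is justified by the rapid decay of the cusp-form coefficients $b_n$ against the at most polynomial growth of the $c_f^-(n)$ and the exponential decay of the incomplete Gamma-function. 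At every cusp $\mathfrak a\neq\infty$ the hypothesis that $f$ is bounded there (no principal part) together with the vanishing of the cusp form $g$ forces the corresponding horocycle integral to vanish in the limit, so only $\infty$ contributes and the stated formula follows, the overall sign being fixed by the boundary orientation.

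For the general case in which $f$ also has poles at other cusps, the same local computation carried out at each such cusp, in the appropriate local uniformizer and with the correct cusp width, produces the analogous term $\sum_{n<0}c_{f,\mathfrak a}^+(n)b_{\mathfrak a}(-n)$ from the expansions of $f$ and $g$ at $\mathfrak a$, and summing over all cusps yields the second assertion. I expect the main technical obstacle to be precisely this bookkeeping at the cusps—matching cusp widths and local coordinates, pinning down orientations and signs in Stokes' theorem, and rigorously justifying the passage to the limit $T\to\infty$ inside the infinite Fourier sums—rather than any conceptual difficulty, the conceptual core being simply holomorphy of $g$ combined with invariance of $\eta$.
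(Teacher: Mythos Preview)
The paper does not supply its own proof of this proposition; it merely quotes the result as \cite[Proposition 3.5]{BF04}. Your argument is correct and is essentially the standard Stokes' theorem computation that Bruinier and Funke carry out in that reference: one rewrites $\{g,f\}$ as $-\int_{\calF} d(gf\,d\tau)$ using holomorphy of $g$ and the cancellation of the $v$-powers, invokes $\Gamma_0(N)$-invariance of the $1$-form $gf\,d\tau$ to reduce the boundary integral to horocycle contributions at the cusps, and then reads off the constant Fourier coefficient at each cusp, with the $f^-$-part contributing terms that vanish as $T\to\infty$ via the decay of the incomplete Gamma function. So there is nothing to compare beyond saying that your outline reproduces the cited proof.
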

\subsection{Rademacher sums and McKay-Thompson series}\label{mckayth}
Here we recall a few basic facts about Poincar\'e series, Rademacher sums, and Rademacher series. For further details, the reader is referred to \cite{CD11,CD12,DF} and the references therein.

An important way to construct modular forms of a given weight and multiplier is through Poincar\'e series. If one assumes absolute and locally uniform convergence, then the function
\[P^{[\mu]}_{\psi,k}(\tau):=\sum_{\gamma\in\Gamma_\infty\setminus \Gamma_0(N)} \overline{\psi}(\gamma)q^\mu|_k\gamma,\]
where $\Gamma_\infty=\left\langle \pm T\right\rangle$ with $T=\left(\begin{smallmatrix} 1 & 1 \\ 0 & 1 \end{smallmatrix}\right)$ denotes the stabilizer of the cusp $\infty$ in $\Gamma_0(N)$ and $\mu\in \frac{\log(\psi(T))}{2\pi i}+\Z$, transforms like a modular form of weight $k$ with multiplier $\psi$ under the action of $\Gamma_0(N)$ and is holomorphic on $\HH$. In fact it is known that we have absolute and locally uniform convergence for weights $k>2$ and in those cases, $P^{[\mu]}_{\psi,k}$ is a weakly holomorphic modular form, which is holomorphic if $\mu\geq 0$ and cuspidal if $\mu>0$.

For certain groups and multiplier systems, one can obtain conditionally, locally uniformly convergent series, now called \emph{Rademacher sums}, for weights $k\geq 1$, by fixing the order of summation as follows. Let for a positive integer $K$
\[\Gamma_{K,K^2}(N):=\left\{\abcd\in\Gamma_0(N)\: : \: |c|<K\text{ and }|d|<K^2\right\}.\] 
One can then define the Rademacher sum
\[R^{[\mu]}_{\psi,k}(\tau)=\lim\limits_{K\to \infty} \sum_{\gamma\in\Gamma_\infty\setminus \Gamma_{K,K^2}(N)} \overline{\psi}(\gamma)q^\mu|_k\gamma.\]
Rademacher observed \cite{Rademacher} that one can in addition regularize the summands individually to obtain convergence for weights $k<1$, see e.g. equations (2.26) and (2.27) in \cite{CD12}. He originally applied this to obtain an exact formula for the coefficients of the modular $j$-function.

In this paper, we especially need to look at Rademacher sums of weights $\tfrac 12$ and $\tfrac 32$ for $\Gamma_0(4N)$ with multiplier
\begin{equation}\label{psi}
\psi_{N,v,h}\left(\gamma\right):=\exp\left(-2\pi iv\frac{cd}{Nh}\right),
\end{equation}
where $\gamma=\abcd\in\Gamma_0(4N)$ and $v,h$ are integers with $h|\gcd(4N,96)$. First we establish convergence of these series.
\begin{proposition}\label{Rademacherconv}
For a positive integer $N$ and multiplier $\psi=\psi_{N,v,h}$ as in \eqref{psi}, the Rademacher sums $R^{[-3]}_{\psi,\frac 12}(\tau)$ and $R^{[3]}_{\overline\psi,\frac 32}$ converge locally uniformly on $\HH$ and therefore define holomorphic functions on $\HH$.
\end{proposition}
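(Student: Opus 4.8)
\emph{Plan of proof.} I would realise both series as instances of the Rademacher sums whose local uniform convergence is treated in \cite{CD11,CD12,DF}, so that the actual work reduces to checking that the pair $(\Gamma_0(4N),\psi_{N,v,h})$ together with the weight--index data $(\tfrac12,-3)$ and $(\tfrac32,3)$ satisfies the standing hypotheses there. First one records that $\psi_{N,v,h}$ is a \emph{unitary} multiplier system of the indicated weight for $\Gamma_0(4N)$: its values lie on the unit circle by inspection of \eqref{psi}, and the cocycle identity for the weight-$k$ slash follows either from a direct computation combining the theta-multiplier with the elementary factor $e(-vcd/(Nh))$, or by exhibiting $\psi_{N,v,h}$ as the pull-back of a genuine multiplier under conjugation by a suitable Atkin--Lehner-type matrix. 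Since $T=\left(\begin{smallmatrix}1&1\\0&1\end{smallmatrix}\right)$ has lower-left entry $0$, we get $\psi_{N,v,h}(T)=1$, hence $\tfrac{\log\psi_{N,v,h}(T)}{2\pi i}=0$, and the indices $\mu\in\{-3,3\}\subset\Z$ are admissible.

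Next, for $\tau$ in a fixed compact subset of $\HH$, I would write the $K$-th partial sum $\sum_{\gamma\in\Gamma_\infty\backslash\Gamma_{K,K^2}(4N)}\overline{\psi_{N,v,h}}(\gamma)\,q^\mu|_k\gamma(\tau)$ by grouping cosets according to the lower-left entry $c$. The $c=0$ coset is $\Gamma_\infty$ itself and contributes $q^\mu$; for each $c>0$ with $4N\mid c$ the sum over the relevant residues $d$ produces a generalised Kloosterman sum, which after combining $\overline{\psi_{N,v,h}}$ with the weight-$k$ theta-multiplier $\left(\left(\tfrac cd\right)\eps_d\right)^{\pm1}$ is essentially a Sali\'e-type sum, while the $d$-part of the summand, after a Lipschitz/Poisson summation, is an absolutely convergent Bessel-type integral for each fixed $c$. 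For the weight-$\tfrac32$ series this is the classical range $1\le k\le 2$: since $2k-1=2>1$, the crude bound $O(c)$ on the inner sum already makes $\sum_c c^{-(2k-1)}=\sum_c c^{-2}$ converge against the bounded Bessel factors, and the $\Gamma_{K,K^2}$-ordering is exactly what cancels the remaining boundary contributions, so conditional local uniform convergence follows from the arguments of \cite{CD12,DF}; the twist by $\overline{\psi_{N,v,h}}$ merely shifts frequencies of exponential sums and is harmless.

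\emph{The main obstacle} is the weight-$\tfrac12$ sum $R^{[-3]}_{\psi,\frac12}$, which sits precisely at the edge of convergence: now $2k-1=1$, so neither the trivial bound nor even Weil's bound $O(c^{1/2+\eps})$ on the (Sali\'e-type) Kloosterman sums renders the $c$-sum absolutely convergent. This is exactly why Rademacher's regularisation of the individual summands $q^{-3}|_{\frac12}\gamma$ is built into the definition: after subtracting the appropriate truncated principal part — equivalently, replacing the exponential by a confluent hypergeometric function as in \cite[\S2]{CD12} — the regularised terms acquire enough additional decay in $c$ that the series converges, the rigorous input being that the associated Kloosterman--Selberg zeta function has no pole on the pertinent vertical line. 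I would therefore isolate the convergence of the regularised weight-$\tfrac12$ and weight-$\tfrac32$ Rademacher sums, for a unitary multiplier of the appropriate weight with trivial value on $T$, as a lemma quoting \cite{CD11,CD12,DF}, and then simply verify as above that $\psi_{N,v,h}$ meets those hypotheses; holomorphy on $\HH$ is then automatic, being a locally uniform limit of holomorphic terms.
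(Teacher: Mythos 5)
Your reduction matches the paper's in outline: the proof given there likewise follows the steps of \cite{CD11} \emph{mutatis mutandis} and isolates, as the single nontrivial analytic input, the convergence at $s=\tfrac34$ of the Kloosterman zeta function $\sum_{c\geq 1}K_\psi(-3,n,4Nc)(4Nc)^{-2s}$; your diagnosis that the weight-$\tfrac32$ sum is unproblematic while the weight-$\tfrac12$ sum sits exactly on the boundary of convergence is also correct.

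The gap is in how you dispose of that input. You propose to quote from \cite{CD11,CD12,DF} a lemma asserting convergence of the regularised weight-$\tfrac12$ Rademacher sum for an arbitrary unitary multiplier that is trivial on $T$, and then merely check that $\psi_{N,v,h}$ meets the hypotheses. No such general lemma is available in those references, and none can be purely formal: at $s=\tfrac34$ neither the trivial bound nor the Weil bound suffices (as you note yourself), so convergence rests on multiplier-specific cancellation in the sums $K_\psi$, and \cite{CD11,CD12} establish it only for the particular eta-type multipliers arising in Mathieu moonshine. For the multipliers $\psi_{N,v,h}$ of \eqref{psi} this convergence is precisely what the paper has to prove on its own: it is deferred to \Cref{GeneralThm3}, where $K_\psi$ is rewritten via Kohnen's identity \eqref{KohnenFormula} as a genus-character-weighted sum over binary quadratic forms, Gannon's Lemma 5(b) is generalised to the groups $\Gamma_0(N;\hat h,m\hat v)$ in \Cref{General5b} to accommodate the twist by $\psi$, and Gannon's explicit estimates are then carried over; this also yields the explicit bounds needed later for positivity. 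Without an argument of this kind (or some substitute, e.g.\ a spectral estimate for the relevant Kloosterman--Selberg zeta function with this multiplier), the convergence of $R^{[-3]}_{\psi,\frac12}$ is asserted rather than proved, so your proposal is incomplete at exactly the step that carries the content of the proposition.
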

\begin{proof}
By following the steps outlined in \cite[Section 5]{CD11} to establish the convergence of weight $\tfrac 12$ Rademacher series with a slightly different multiplier system (related to that of the Dedekind eta function) \emph{mutatis mutandis}, we find that the Rademacher sums we are interested in converge, assuming the convergence at $s=\tfrac 34$ of the Kloosterman zeta function 
\[\sum_{c=1}^\infty\frac{K_\psi(-3,n,4Nc)}{(4Nc)^{2s}}\]
with 
\begin{equation}\label{Kloost}
K_\psi(m,n,c):=\sideset{}{^*}\sum_{d\smod{c}}\psi(c,d)\left(\frac cd\right)\eps_d e\left(\frac{m\overline{d}+nd}{c}\right),
\end{equation}
where the ${}^*$ at the sum indicates that it runs over primitive residue classes modulo $c$, $\overline{d}$ denotes the multiplicative inverse of $d$ modulo $c$, and $e(\alpha):=\exp(2\pi i\alpha)$ as usual. We omit the subscript if $\psi=1$. In order to establish positivity of the multiplicities of irreducible characters in \Cref{secPos}, we will show not only convergence of this series, but even explicit estimates for its value, which will complete the proof.
\end{proof}
Since the Rademacher sum $R^{[-3]}_{\psi,\frac 12}(\tau)$ is $1$-periodic by construction, it has a Fourier expansion, which can (at least formally) be established by standard methods. Projecting this function to the Kohnen plus space then yields the function
\begin{equation}\label{Z}
Z_{N,\psi}(\tau):=q^{-3}+\sum_{\substack{n=0 \\ n\equiv 0,1\smod{4}}}^\infty A_{N,\psi}(n)q^n,
\end{equation}
where $A_{N,\psi}$ is given by
\begin{equation}\label{A}
\begin{aligned}
A_{N,\psi}(0)&:=\frac{\pi\sqrt{3}}{2 N^\frac 32}(1-i)\sum_{c=1}^\infty \left(1+\dodd(Nc)\right)\frac{K_\psi(-3,0,4Nc)}{(4Nc)^\frac 32},\\
A_{N,\psi}(n)&:=\frac{\pi\sqrt{2}}{4N}\left(\frac 3n\right)^\frac 14(1-i)\sum_{c=1}^\infty(1+\dodd(Nc))\frac{K_\psi(-3,n,4Nc)}{4Nc}I_\frac 12\left(\frac{\pi\sqrt{3n}}{Nc}\right).
\end{aligned}
\end{equation}
Here, $I_\frac 12$ denotes the usual modified Bessel function of the first kind of order $\tfrac 12$ and
\[\dodd(k):=\begin{cases} 1 & k\text{ odd,} \\ 0 & k\text{ even.}\end{cases}\]

For each conjugacy class $[g]$ of the Thompson group $Th$, we associate integers $v_g$ and $h_g$ (where $h_g|96$) as specified in \Cref{mult} and the character $\psi_{[g]}:=\psi_{|g|,v_g,h_g}$, where $|g|$ denotes the order of $g$ in $Th$, as well as a finite sequence of rational numbers $\kappa_{m,g}$ which are also given in \Cref{mult} and define the function
\begin{equation}\label{calF}
\calF_{[g]}(\tau):=2Z_{|g|,\psi_{[g]}}(\tau)+\sum_{\substack{m>0 \\ m^2\mid h_g|g|}}\kappa_{m,g}\vartheta(m^2\tau),
\end{equation}
with
\[\vartheta(\tau):=\sum_{n\in\Z}q^{n^2}.\]
This is going to be the explicitly given weakly holomorphic modular form (see \Cref{prop:weak}) mentioned in \Cref{conj:main}, meaning that we have
\[\calT_{[g]}(\tau)=\calF_{[g]}(\tau)\]
for all conjugacy classes $[g]$ of $Th$.

We now prove and recall some important facts about Rademacher sums that we shall use later on. As in \cite[Propositions 7.1 and 7.2]{CD11}, one sees the following.
\begin{proposition}\label{Rademachershadow}
The Rademacher sum $R^{[-3]}_{\psi,\frac 12}(\tau)$ with $\psi$ as in \eqref{psi} is a mock modular form of weight $\tfrac 12$ whose shadow is a cusp form with the conjugate multiplier $\overline\psi$, which is a constant multiple of the Rademacher sum $R^{[3]}_{\psi,\frac 32}$.
\end{proposition}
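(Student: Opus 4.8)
The plan is to adapt \cite[Propositions 7.1 and 7.2]{CD11} to the present multiplier $\psi=\psi_{N,v,h}$ and level $4N$. Write $R:=R^{[-3]}_{\psi,1/2}$. By \Cref{Rademacherconv} this is a holomorphic, $1$-periodic function on $\HH$, so the only thing left is its behaviour under a general $\gamma=\abcd\in\Gamma_0(4N)$ with $c\neq0$. First I would analyze $R|_{1/2}\gamma-\psi(\gamma)R$. The crucial subtlety is that the series defining $R$ is only \emph{conditionally} convergent: $R|_{1/2}\gamma$ is obtained from $R$ not by merely re-indexing the cosets in $\Gamma_\infty\backslash\Gamma_{K,K^2}(4N)$ by right multiplication with $\gamma$, but with an extra contribution from those cosets that cross the boundary of the truncation $\Gamma_{K,K^2}(4N)$ under multiplication by $\gamma$. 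Running this comparison exactly as in \cite[\S7]{CD11} and using the integral representation of the regularized summand $q^{-3}|_{1/2}\sigma$ (for $\sigma$ with nonzero lower-left entry) in terms of a confluent hypergeometric function, the boundary term is identified with a non-holomorphic period integral, giving an identity of the shape
\[
R|_{1/2}\gamma(\tau)-\psi(\gamma)R(\tau)=c_0\int_{-\overline{\gamma^{-1}\infty}}^{i\infty}\frac{g(z)\,dz}{(z+\tau)^{1/2}}
\]
for a fixed constant $c_0$ and a fixed weight $\tfrac32$ form $g$ with multiplier $\overline\psi$, independent of $\gamma$. This is precisely the statement that $R$ is a mock modular form of weight $\tfrac12$ whose shadow is a multiple of $g$.

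Next I would identify $g$. By the mapping property of the $\xi_{1/2}$-operator recalled above, the shadow automatically lies in $S_{3/2}(4N,\overline\psi)$: the only exponentially growing term of $R$ is the holomorphic $q^{-3}$ at $\infty$, so the non-holomorphic part $R^-$ in the notation of \Cref{lem:split} is the period integral of a form that is bounded at every cusp, hence $g$ is cuspidal. It then suffices to show $g$ is a \emph{nonzero} multiple of $R^{[3]}_{\overline\psi,3/2}$, which by \Cref{Rademacherconv} is a genuine holomorphic cusp form of weight $\tfrac32$ and multiplier $\overline\psi$ (the index $3$ being positive). Since $S_{3/2}(4N,\overline\psi)$ is finite dimensional, it is enough to match a single Fourier coefficient: I would read off the non-holomorphic coefficients $c_R^-(n)$ directly from the formal Fourier expansion of $R$ --- they are Kloosterman-sum-weighted Bessel sums of exactly the shape appearing in the Fourier expansion of the weight $\tfrac32$ Rademacher sum --- and check that the leading term (coefficient of $q^3$) agrees up to the explicit factor $-(4\pi)^{1/2}$ coming from $\xi_{1/2}$ and is nonzero. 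This pins down the shadow of $R$ as a constant multiple of $R^{[3]}_{\overline\psi,3/2}$.

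The hard part will be the same analytic point that already underlies \Cref{Rademacherconv}: making rigorous the rearrangement of the conditionally convergent double sum for $R|_{1/2}\gamma$, i.e. proving that the contribution of the boundary cosets of $\Gamma_{K,K^2}(4N)$ has a limit as $K\to\infty$ and evaluating that limit. This rests on the uniform estimates for partial sums of Kloosterman zeta functions from \cite[\S5]{CD11}, applied \emph{mutatis mutandis} to $\psi_{N,v,h}$; granting those, the remainder is bookkeeping of the multiplier, of the half-integral weight automorphy factor $\left(\frac cd\right)\eps_d$, and of Rademacher's regularization of the $c\neq0$ terms (needed since $\tfrac12<1$). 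Finally, everything above must be compatible with the Kohnen plus-space projection used to pass from $R$ to $Z_{N,\psi}$ in \eqref{Z}; since that projection is a fixed finite linear combination of slash operators, it commutes with $\xi_{1/2}$ up to a scalar and does not affect the conclusion.
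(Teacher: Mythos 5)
Your proposal takes essentially the same route as the paper, whose entire proof of this proposition is the citation of \cite[Propositions 7.1 and 7.2]{CD11} applied \emph{mutatis mutandis} to the multiplier $\psi_{N,v,h}$; your sketch simply fills in what that argument does (boundary-term analysis of the conditionally convergent sum, cuspidality of the shadow, and identification of the non-holomorphic Fourier coefficients with the Kloosterman--Bessel coefficients of the weight $\tfrac 32$ Rademacher sum). One caution: the proportionality to $R^{[3]}_{\overline\psi,\frac 32}$ should rest on the term-by-term match of the full coefficient expansions (matching a single coefficient in a finite-dimensional space does not by itself give proportionality), and you should not claim the leading coefficient is nonzero --- the proposition only asserts a constant multiple, and indeed \Cref{prop:weak} later shows the shadow vanishes identically for every class, the point being that the universal constant relating shadow and $R^{[3]}_{\overline\psi,\frac 32}$ is nonzero while the weight $\tfrac 32$ Rademacher sum itself is zero.
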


Next we establish the behaviour of Rademacher sums at cusps. Here we have to take into account that the sums we look at are projected into the Kohnen plus space which might affect the behaviour at cusps. For a function $f\in M_{k+\frac 12}^!(\Gamma_0(4N))$, where $k$ is an integer and $N$ is odd, the projection of $f$ to the plus space is defined by
\[f|\pr=(-1)^{\lfloor\frac{k+1}{2}\rfloor}\frac{1}{3\sqrt{2}}\sum_{v=-1}^2( f|B\cdot A_v)+\frac 13 f,\]
where 
\[B=\begin{pmatrix}
4 & 1 \\ 0 & 4
\end{pmatrix}\quad\text{and}\quad A_v=\begin{pmatrix}
1 & 0 \\ 4Nv & 1
\end{pmatrix}.\]

Using this projection operator, one sees that the following is true. The proof is similar in nature to that of Proposition 3 in \cite{Kohnen} and is carried out in some detail (for the special case where $N=p$ is an odd prime) in \cite[Section 2]{GreenJenkins}. 
\begin{lemma}\label{plusspace}
Let $N$ be odd and $f\in H_{k+\frac 12}(\Gamma_0(4N))$ for some $k\in\N_0$, such that 
\[f^+(\tau)=q^{-m}+\sum_{n=0}^\infty a_nq^n\]
for some $m>0$ with $-m\equiv 0,(-1)^k\pmod 4$ has a non-vanishing principal part only at the cusp $\infty$ and is bounded at the other cusps of $\Gamma_0(4N)$. Then the projection $f|\pr$ of $f$ to the plus space has a pole of order $m$ at $\infty$ and has a pole of order $\frac m4$ either at the cusp $\frac 1N$ if $m\equiv 0\pmod 4$ or at the cusp $\frac1{2N}$ if $-m\equiv(-1)^k\pmod 4$ and is bounded at all other cusps.
\end{lemma}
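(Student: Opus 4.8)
The plan is to compute directly with the projection operator $\pr$, in the spirit of Kohnen's \cite[Proposition 3]{Kohnen} and the more detailed prime-level analysis of Green and Jenkins \cite[Section 2]{GreenJenkins}, by tracking the behaviour of the four terms $f|BA_v$ ($v\in\{-1,0,1,2\}$) together with the term $\tfrac13 f$ at every cusp of $\Gamma_0(4N)$. First I would note that, since $\Delta_{k+\frac12}$ commutes with all slash operators, $f|\pr$ is again a harmonic Maa{\ss} form, lying by construction in the Kohnen plus space; and since the non-holomorphic part of $f$, and of each $f|BA_v$, is of incomplete-Gamma type and decays exponentially towards every cusp, the growth of $f|\pr$ at a cusp is governed entirely by its holomorphic part, so it suffices to locate the poles of $(f|\pr)^+$. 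Writing $BA_v=\left(\begin{smallmatrix} 4+4Nv & 1\\ 16Nv & 4\end{smallmatrix}\right)$, a matrix of determinant $16$, the term $f|BA_v$ grows exponentially at a cusp $\mathfrak a=\tfrac ac$ of $\Gamma_0(4N)$ exactly when $(BA_v)(\mathfrak a)$ is $\Gamma_0(4N)$-equivalent to $\infty$, that is, when the reduced denominator of $(BA_v)(\mathfrak a)=\tfrac{(4+4Nv)a+c}{4(4Nva+c)}$ is divisible by $4N$. A short computation with greatest common divisors, using $\gcd(a,c)=1$ and that $N$ is odd, shows that this can happen only if $N\mid c$. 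Since $\Gamma_0(4N)$ has a unique cusp of denominator $N$, a unique cusp of denominator $2N$, and a unique cusp of denominator $4N$ (namely $\infty$), the only cusps at which $f|\pr$ can have a pole are $\infty$, $\tfrac1N$ and $\tfrac1{2N}$.

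Next I would treat the cusp $\infty$. There the term $\tfrac13 f$ contributes $\tfrac13 q^{-m}$; the term $v=0$ equals $f|B=\zeta_0\,f(\tau+\tfrac14)$ for a fixed $\zeta_0$ of absolute value one and again contributes a multiple of $q^{-m}$; and among $v=\pm1,2$ one finds that only $v=2$ sends $\infty$ into the $\Gamma_0(4N)$-orbit of $\infty$, the relevant upper-triangular factor turning out to have equal diagonal entries, so that this term too contributes only a pole of order $m$, while $v=\pm1$ are bounded at $\infty$. Hence $(f|\pr)^+$ has a pole of order at most $m$ there. The hypothesis $-m\equiv 0,(-1)^k\pmod 4$ says precisely that $q^{-m}$ is an admissible principal-part term for the weight $k+\tfrac12$ plus space, and by the standard evaluation of $\pr$ on the Fourier expansion at $\infty$ --- exactly Kohnen's computation in \cite[Proposition 3]{Kohnen} --- the coefficient of $q^{-m}$ is unchanged, so $f|\pr$ has a pole of order exactly $m$ at $\infty$.

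The heart of the argument is the remaining analysis at the cusps $\tfrac1N$ and $\tfrac1{2N}$. Fixing scaling matrices $\sigma_{\mathfrak a}$ and decomposing $BA_v\sigma_{\mathfrak a}=\gamma_v\left(\begin{smallmatrix}\alpha_v & \beta_v\\0&\delta_v\end{smallmatrix}\right)$ with $\gamma_v\in\Gamma_0(4N)$ and $\alpha_v\delta_v=16$, the greatest-common-divisor computation above shows that $(BA_v)(\mathfrak a)\sim\infty$ for every one of the four values of $v$, and one checks that $\alpha_v/\delta_v=\tfrac14$ in each case; thus every term $f|BA_v$ contributes at $\mathfrak a$ a leading term of order $\tfrac m4$, with coefficient $\zeta_v$ a root of unity depending on $v$, on $m$, and --- through the automorphy factors $\left(\frac cd\right)\eps_d$ attached to $\gamma_v$ --- on the chosen decomposition. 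Since $v$ runs over a complete residue system modulo $4$, summing over $v$ turns the coefficient of the order-$\tfrac m4$ term in $(f|\pr)^+$ into a constant multiple of a finite character sum $\sum_v\zeta_v$; evaluating it shows that this sum vanishes at $\tfrac1N$ unless $m\equiv 0\pmod 4$, and vanishes at $\tfrac1{2N}$ unless $-m\equiv(-1)^k\pmod 4$. Under the hypothesis exactly one of these two congruences holds, so $f|\pr$ acquires a pole of order $\tfrac m4$ at precisely one of $\tfrac1N$, $\tfrac1{2N}$, the one named in the statement, and is holomorphic at every other cusp.

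The step I expect to be the main obstacle is this last cancellation: one must carry out the decompositions $BA_v\sigma_{\mathfrak a}=\gamma_v\left(\begin{smallmatrix}\alpha_v & \beta_v\\0&\delta_v\end{smallmatrix}\right)$ explicitly, keep careful track of the cusp widths and of the half-integral weight automorphy factors (including the fractional exponents forced at $\tfrac1{2N}$ by the nontriviality of the multiplier on the stabilizer), and then identify the finite character sums $\sum_v\zeta_v$ precisely enough to read off the dichotomy between $\tfrac1N$ and $\tfrac1{2N}$. Everything else is the same bookkeeping already performed in the prime-level case in \cite[Section 2]{GreenJenkins}; the only extra input needed for general odd $N$ is the observation used above that $\Gamma_0(4N)$ still possesses exactly one cusp each of denominators $N$ and $2N$, so that no further cusps enter the computation.
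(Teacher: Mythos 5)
Your setup coincides with the paper's own: you reduce to the three cusps $\infty$, $\frac1N$, $\frac1{2N}$ via the divisibility condition $N\mid c$, note that the non-holomorphic part decays so only $f^+$ matters, handle $\infty$ by Kohnen's computation, and propose to analyze the four terms $f|BA_v$ at the remaining two cusps by factoring $BA_v\sigma_{\mathfrak a}$ into a congruence-group element times an upper triangular matrix. However, the proof proper is exactly the step you defer: you assert that the resulting sum of four roots of unity vanishes at $\frac1N$ unless $4\mid m$ and at $\frac1{2N}$ unless $-m\equiv(-1)^k\pmod 4$, and you yourself flag this as ``the main obstacle.'' That assertion \emph{is} the content of the lemma, so as it stands there is a genuine gap. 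In the paper the computation is carried out: at $\frac1N$ one finds $f|BA_v\sigma_{1/N}=C\,f\!\left(\frac{\tau+\delta_v-4\beta_v}{16}\right)$ with the automorphy constant $C$ \emph{independent} of $v$ and the shifts $\delta_v-4\beta_v$ running over all residues modulo $16$ that are congruent to $N$ modulo $4$; summing over $v$ therefore annihilates every Fourier exponent not divisible by $4$, so the pole survives if and only if $4\mid m$, with no dependence on the weight. At $\frac1{2N}$ one finds instead $f|BA_v\sigma_{1/(2N)}=C_v'\,f\!\left(\frac{2\tau+\delta_v-4\beta_v}{8}\right)$, where $C_v'$ genuinely depends on $v$ through the half-integral-weight automorphy factors $\left(\frac cd\right)\eps_d$, and it is precisely this $v$-dependence, combined with the shifts, that selects the exponents $\equiv(-1)^k\pmod 4$. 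Your uniform description ``a character sum $\sum_v\zeta_v$'' papers over this asymmetry: it gives no indication of where the weight $k$ enters at $\frac1{2N}$ but not at $\frac1N$, nor why the two congruence conditions on $m$ are complementary, and these are exactly the facts one must establish.

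Two smaller points. The ratio $\alpha_v/\delta_v=\frac14$ you claim ``in each case'' is only correct if your scaling matrices absorb the cusp widths (the cusp $\frac1N$ of $\Gamma_0(4N)$ has width $4$, the cusp $\frac1{2N}$ width $1$); with the bare matrices $\sigma_{1/N}=\left(\begin{smallmatrix}1&0\\N&1\end{smallmatrix}\right)$ and $\sigma_{1/(2N)}=\left(\begin{smallmatrix}1&0\\2N&1\end{smallmatrix}\right)$ used in the paper, the ratios are $\frac1{16}$ and $\frac14$ respectively, which is what produces a pole of order $\frac m4$ in the appropriate local uniformizer at each cusp; be explicit about this normalization, since conflating the two is an easy way to get the wrong pole order at $\frac1N$. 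The remaining ingredients of your outline (the reduction to $N\mid c$, the uniqueness of the cusps of denominators $N$, $2N$, $4N$, the boundedness of the $\frac13 f|\sigma_{\mathfrak a}$ term, and the treatment of the cusp $\infty$) do match the paper's argument and are fine.
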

\begin{proof}
In order to compute the expansion of $f|\pr$ at a given cusp $\fa=\frac{a}{c}$, we compute
\[f|\pr|\sigma_\fa,\]
where $\sigma_\fa=\abcd\in\SLZ$. Multiplying out the matrices, we see that this is (up to a constant factor) equal to
\begin{equation}\label{projcusp}
(-1)^{\lfloor\frac{k+1}{2}\rfloor}\frac{1}{3\sqrt{2}}\sum_{v=-1}^2 f|\begin{pmatrix}
(4+4Nv)a+c & (4+4Nv)b+d \\ 16Nva+4c & 16Nvb+4d
\end{pmatrix}+\frac 13 f|\sigma_\fa.
\end{equation}
By assumption, this function can only have a pole at $\infty$ if the denominator of the fraction $\frac{(4+4Nv)a+c}{16Nva+4c}$ in lowest terms (where we allow the denominator to be $0$ which we interpret as $\infty$) is divisible by $4N$, which is easily seen to imply that $N|c$. Since there are only three inequivalent cusps of $\Gamma_0(4N)$ whose denominator is divisible by $N$, represented by $\infty$, $\frac 1N$, $\frac 1{2N}$, we can restrict ourselves to 
\[\sigma_{\frac 1N}=\begin{pmatrix}
1 & 0 \\ N & 1
\end{pmatrix}\quad\text{and}\quad \sigma_{\frac 1{2N}}=\begin{pmatrix}
1 & 0 \\ 2N & 1
\end{pmatrix}.\]
Plugging first $\sigma_\frac 1N$ into \eqref{projcusp} we find for each $v=-1...2$ that 
\[f|B\cdot A\cdot \sigma_\frac 1N=f|\begin{pmatrix}
4+4Nv+N & \beta_v \\ 16Nv+4N & \delta_v
\end{pmatrix}\cdot\begin{pmatrix}
1 & \delta_v-4\beta_v \\ 0 & 16
\end{pmatrix}=Cf\left(\frac{\tau+\delta_v-4\beta_v}{16}\right),\]
where $\beta_v,\delta_v\in\Z$ such that 
\[\det \begin{pmatrix}
4+4Nv+N & \beta_v \\ 16Nv+4N & \delta_v
\end{pmatrix}=1\]
and $C\in\C$ is a constant that a priori depends on $v$, but by working out the corresponding automorphy factors, one sees with easy, elementary methods that it does indeed not. Note that it does however depend on $N$. Furthermore it is not hard to see that the difference $\delta_v-4\beta_v$ runs through all residue classes modulo $16$ that are congruent to $N$ modulo $4$. This implies that in 
\[\sum_{v=-1}^2 f\left(\frac{\tau+\delta_v-4\beta_v}{16}\right)\]
only powers of $q^\frac{1}{16}$ survive whose exponent is divisible by $4$. Since by assumption we have that $f|\sigma_\frac 1N=O(1)$ as $\tau\to\infty$, we therefore see that $f|\pr$ has a pole of order $\frac m4$ at the cusp $\tfrac 1N$ if and only if $m$ is divisible by $4$. 

For the cusp $\tfrac 1{2N}$, the argumentation is analogous. One finds that
\[f|B\cdot A\cdot \sigma_\frac 1{2N}=f|\begin{pmatrix}
2+2Nv+N & \beta_v' \\ 8Nv+4N & \delta_v'
\end{pmatrix}\cdot\begin{pmatrix}
2 & \delta_v-4\beta_v \\ 0 & 8
\end{pmatrix}=C_v'f\left(\frac{2\tau+\delta_v-4\beta_v}{8}\right),\]
where $\beta_v',\delta_v',C_v'$ have the analogous meaning as $\beta_v,\delta_v,C$ above, with the only difference that $C_v'$ actually does depend on $v$. The dependence on $v$ is so that in the summation only powers of $q^\frac 14$ with exponents $\equiv (-1)^k\pmod 4$ survive, which implies our Lemma.
\end{proof}

For even $N$, it turns out that the Rademacher series are automatically in the plus space. This follows immediately from the next lemma.

\begin{lemma}\label{lemKloost}
Let $m, n\in\Z$ such that $m\not\equiv n\pmod 4$ and $c\in\N$ be divisible by $8$. Then we have
\[K(m,n,c)=0\]
with $K(m,n,c)$ as defined in \eqref{Kloost}.
\end{lemma}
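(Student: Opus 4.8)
The plan is to reduce the vanishing of the Salié-type Kloosterman sum $K(m,n,c)$ to a statement about the Gauss sums obtained after splitting the modulus $c$ into its $2$-part and odd part, and then to exhibit an explicit sign-change involution on the summation set over the $2$-part that forces cancellation exactly when $m\not\equiv n\pmod 4$. First I would recall that $K(m,n,c)$ is a twisted Salié sum, i.e.\ involves the quadratic symbol $\left(\frac cd\right)$ and the factor $\eps_d$ rather than just an additive character; such sums are known to factor over coprime moduli (a Chinese-remainder-theorem argument, being careful with how $\left(\frac cd\right)\eps_d$ decomposes). Writing $c=2^\lambda c'$ with $c'$ odd and $8\mid c$ (so $\lambda\ge 3$), I would obtain $K(m,n,c)$ as a product of a sum over residues modulo $2^\lambda$ and a sum over residues modulo $c'$, up to normalizing constants and a change of the arguments $m,n$ by units. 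The odd-modulus factor is harmless; all the arithmetic lives in the $2$-adic factor.

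The heart of the argument is then the $2$-adic factor: a sum of the shape $\sum_{d\,(2^\lambda)}^{*}\eta(d)\, e\!\left(\frac{m\bar d+nd}{2^\lambda}\right)$ where $\eta$ is the appropriate quadratic/$\eps_d$-twist, a character of conductor dividing $8$. The key step is to substitute $d\mapsto d(1+2^{\lambda-1})$, equivalently $d\mapsto d+2^{\lambda-1}d$. Since $8\mid 2^\lambda$, one has $(1+2^{\lambda-1})^2\equiv 1\pmod{2^\lambda}$, so $\overline{d(1+2^{\lambda-1})}\equiv \bar d(1+2^{\lambda-1})\pmod{2^\lambda}$, and the additive character picks up the factor $e\!\left(\frac{2^{\lambda-1}(m\bar d+nd)}{2^\lambda}\right)=(-1)^{m\bar d+nd}$. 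Because $d$ is odd, $\bar d$ is odd, so $m\bar d+nd\equiv m+n\pmod 2$; thus this factor equals $(-1)^{m+n}$, which is $-1$ precisely when $m\not\equiv n\pmod 2$. Meanwhile the twist $\eta(d(1+2^{\lambda-1}))/\eta(d)=\eta(1+2^{\lambda-1})$ is a fixed fourth root of unity independent of $d$; one computes it explicitly from the definitions of $\left(\frac{c}{d}\right)$ and $\eps_d$ (this is where the hypothesis $m\not\equiv n\pmod 4$, as opposed to merely mod $2$, enters: combining the additive sign $(-1)^{m+n}$ with the multiplier sign $\eta(1+2^{\lambda-1})$ yields an overall factor $-1$ exactly under $m\not\equiv n\pmod 4$). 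The substitution is a bijection on primitive residues mod $2^\lambda$, so the sum equals $-1$ times itself, hence vanishes, and therefore so does $K(m,n,c)$.

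I expect the main obstacle to be bookkeeping rather than conceptual: correctly tracking how $\left(\frac{c}{d}\right)\eps_d$ behaves under the factorization $c=2^\lambda c'$ and under the translation $d\mapsto d(1+2^{\lambda-1})$, including the dependence of $\left(\frac{c}{d}\right)$ on $c\bmod 8$ via quadratic reciprocity and the value of $\eps_d$ for $d$ in the relevant residue classes mod $4$. A clean way to organize this is to treat separately the cases $\lambda=3$ and $\lambda\ge 4$ (or to note that for $\lambda\ge 4$ the character $\eta$ restricted to $1+2^{\lambda-1}$ is automatically trivial, so the whole sign comes from the additive character and the claimed congruence condition simplifies), and to verify the one remaining case by a direct short computation. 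Alternatively, one can cite the standard evaluation of (twisted) Salié sums — these are essentially Gauss sums and vanish or are evaluated explicitly depending on congruence conditions on $m,n$ modulo powers of $2$ — and simply extract the stated vanishing criterion; the self-contained involution argument above is, however, both elementary and self-checking.
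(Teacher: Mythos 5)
Your CRT reduction to the modulus $2^\lambda$ and your involution $d\mapsto d(1+2^{\lambda-1})$ (which, since $d$ is odd, is the same as $d\mapsto d+2^{\lambda-1}$) reproduce the first half of the paper's argument and do correctly dispose of the case $m\not\equiv n\pmod 2$: the additive character picks up $(-1)^{m\bar d+nd}=(-1)^{m+n}=-1$, while for $\lambda\ge 4$ one has $d+2^{\lambda-1}\equiv d\pmod 8$, so the twist $\left(\frac{2^\lambda}{d}\right)\eps_d$ is unchanged and the terms cancel in pairs. The genuine gap is the remaining case $m\equiv n\pmod 2$ but $m\not\equiv n\pmod 4$ (which is needed in the application: e.g.\ $m=-3\equiv 1$ and $n\equiv 3\pmod 4$ must be killed for the plus-space property). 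There your substitution produces the additive factor $(-1)^{m+n}=+1$, and the multiplier ratio $\eta(1+2^{\lambda-1})$ cannot supply the missing sign: it is independent of $m$ and $n$ (and, as you yourself note, trivial for $\lambda\ge 4$), so your assertion that the combined factor is $-1$ exactly when $m\not\equiv n\pmod 4$ cannot be correct. Your single involution only ever sees $m+n\bmod 2$; no amount of bookkeeping of $\left(\frac{c}{d}\right)\eps_d$ will make it detect a condition modulo $4$. The fallback of citing a standard evaluation of twisted Sali\'e sums is not a proof as written.

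The paper closes this case with a second, different pairing, shifting by $2^{\ell-2}$ instead of $2^{\ell-1}$. Concretely, $\overline{d+2^{\ell-2}}\equiv \bar d-2^{\ell-2}\bar d^{\,2}\pmod{2^\ell}$ (valid for $\ell\ge 4$), so the additive character changes by $e\!\left(2^{\ell-2}(n-m\bar d^{\,2})/2^\ell\right)=i^{\,n-m}$ because $\bar d^{\,2}\equiv 1\pmod 8$; this equals $-1$ precisely when $n-m\equiv 2\pmod 4$, and for $\ell\ge 5$ the twist is again unchanged since $d+2^{\ell-2}\equiv d\pmod 8$. Combined with the $2^{\ell-1}$-invariance of the summand (your first step, now with $m+n$ even), this pairs the terms with opposite signs and forces the sum to vanish; the small moduli $2^3$ (and, in effect, $2^4$) are checked directly. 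So to repair your proposal you need this additional involution detecting $n-m\bmod 4$, not a finer computation of $\eta(1+2^{\lambda-1})$.
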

\begin{proof}
We write $c=2^\ell c'$ with $\ell\geq 3$ and $c'$ odd. By the Chinese Remainder Theorem one easily sees the following multiplicative property of the Kloosterman sum,
\begin{equation}
K(m,n,c)=K(m\overline{c'},n\overline{c'},2^\ell)\cdot S(m\overline{2^\ell},n\overline{2^\ell},c'),
\end{equation}
where
\[S_c(m,n,c):=\sideset{}{^*}\sum_{d\smod{c}} \left(\frac cd\right)e\left(\frac{m\overline{d}+nd}{c}\right)\]
is a Sali\'e sum and $\overline{c'}$ denotes the inverse of $c'$ modulo $2^\ell$ and $\overline{2^\ell}$ denotes the inverse of $2^\ell$ modulo $c'$.

Therefore, it suffices to show the lemma for $c=2^\ell$ with $\ell\geq 3$. The case where $\ell=3$ can be checked directly, so assume $\ell\geq 4$ from now on. In this case, it is straightforward to see that
\[\left(\frac {2^\ell}{d}\right)=\left(\frac{2^\ell}{d+2^{\ell-1}}\right),\qquad \eps_d=\eps_{d+2^{\ell-1}},\qquad \text{and}\qquad \overline{d+2^{\ell-1}}=\overline{d}+2^{\ell-1}.\]
This yields that for $m\not\equiv n\pmod 2$ we have that
\[\left(\frac{2^\ell}{d+2^{\ell-1}}\right)\eps_{d+2^{\ell-1}}\cdot e\left(\frac{m(\overline{d+2^{\ell-1}})+n(d+2^{\ell-1})}{2^\ell}\right)=-\left(\frac{2^\ell}{d}\right)\eps_d e\left(\frac{m\overline{d}+nd}{2^\ell}\right)\]
for all odd $d\in\{1,...,2^{\ell-1}-1\}$, so that the summands in the Kloosterman sum pair up with opposite signs, making the sum $0$ as claimed. 

If $m$ and $n$ have the same parity, but are not congruent modulo $4$, a similar pairing also works. In this case we find through similar reasoning that for $\ell\geq 5$ we have
\begin{align*}
\left(\frac{2^\ell}{d}\right)\eps_d e\left(\frac{m\overline{d}+nd}{2^\ell}\right)&=\left(\frac{2^\ell}{d+2^{\ell-1}}\right)\eps_{d+2^{\ell-1}}\cdot e\left(\frac{m(\overline{d+2^{\ell-1}})+n(d+2^{\ell-1})}{2^\ell}\right)\\
&=-\left(\frac{2^\ell}{d+2^{\ell-2}}\right)\eps_{d+2^{\ell-2}}\cdot e\left(\frac{m(\overline{d+2^{\ell-2}})+n(d+2^{\ell-2})}{2^\ell}\right).
\end{align*}
Again, we can pair summands with opposite signs, proving the lemma.
\end{proof}
From the preceding two lemmas we immediately find that the following is true.
\begin{proposition}\label{Rademachercusp}
For any $g\in Th$, the function $Z_{|g|,\psi_{[g]}}$ is a mock modular form which has a pole of order $3$ at $\infty$, a pole of order $\tfrac 34$ at $\frac 1{2N}$ if $N$ is odd, and vanishes at all other cusps.
\end{proposition}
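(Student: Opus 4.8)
The plan is to deduce all three assertions from the results already assembled, splitting the argument according to the parity of $N:=|g|$ and invoking \Cref{plusspace} only when $N$ is odd.

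For mock modularity I would start from \Cref{Rademachershadow}, which exhibits $R^{[-3]}_{\psi_{[g]},\frac12}$ as the holomorphic part of a harmonic Maa{\ss} form $\widehat R\in H_{\frac12}(\Gamma_0(4N),\psi_{[g]})$ whose shadow is a nonzero multiple of $R^{[3]}_{\psi_{[g]},\frac32}$; since $\psi_{[g]}\!\left(\begin{smallmatrix}1&1\\0&1\end{smallmatrix}\right)=1$, the canonical splitting of \Cref{lem:split} applies to $\widehat R$. Because the plus-space projection $f\mapsto f|\pr$ is a finite $\C$-linear combination of slash operators by matrices in $\GL_2^+(\Q)$, it carries harmonic Maa{\ss} forms to harmonic Maa{\ss} forms and commutes with the decomposition into holomorphic and non-holomorphic parts; hence $Z_{N,\psi_{[g]}}=(\widehat R|\pr)^+$ is again mock modular, with shadow a multiple of the plus-space projection of $R^{[3]}_{\psi_{[g]},\frac32}$. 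The pole of order $3$ at $\infty$ is then visible directly from \eqref{Z}: the principal part $q^{-3}$ lies in an exponent $\equiv1\pmod4$, so it is untouched by $\pr$.

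For the behaviour at the other cusps I would first invoke the standard fact --- built into the construction of Rademacher sums attached to $\infty$ with negative index, see \cite{CD11,CD12} --- that $\widehat R$ has a non-vanishing principal part only at $\infty$ and vanishes at every other cusp of $\Gamma_0(4N)$. When $N$ is even we have $8\mid 4Nc$ for all $c\ge1$, so \Cref{lemKloost} forces the Kloosterman sums $K_{\psi_{[g]}}(-3,n,4Nc)$ to vanish whenever $n\not\equiv-3\pmod4$; by the formulae \eqref{A} every nonzero Fourier coefficient of $R^{[-3]}_{\psi_{[g]},\frac12}$ then occurs in an exponent $n\equiv1\pmod4$, so this Rademacher sum already lies in the Kohnen plus space, $\pr$ fixes it, and $Z_{N,\psi_{[g]}}=R^{[-3]}_{\psi_{[g]},\frac12}$ has its only pole at $\infty$. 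When $N$ is odd I would apply \Cref{plusspace} with $k=0$ and $m=3$: the hypothesis $-m\equiv0,(-1)^k\pmod4$ holds because $-3\equiv1\pmod4$, and since $3\not\equiv0\pmod4$ while $-3\equiv(-1)^0\pmod4$ the conclusion is exactly that $\widehat R|\pr$ has a pole of order $3$ at $\infty$, a pole of order $\tfrac34$ at the cusp $\tfrac1{2N}$, and is bounded --- indeed vanishing, there being no principal part there --- at every remaining cusp.

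The step I expect to be the main obstacle is the fact quoted at the start of the previous paragraph: that the merely conditionally convergent, individually regularized series $R^{[-3]}_{\psi_{[g]},\frac12}$ genuinely extends to a harmonic Maa{\ss} form on $\Gamma_0(4N)$ that is bounded, and in fact vanishes, at every cusp other than $\infty$. This is the substantive output of the Rademacher-sum machinery of \cite{CD11,CD12}, together with the convergence of \Cref{Rademacherconv} and the shadow identification of \Cref{Rademachershadow}; one must also check that the pairing argument of \Cref{lemKloost} survives the twist by $\psi_{[g]}$ in the even case. Once these are in hand, what remains is just the congruence bookkeeping $-3\equiv1\pmod4$, which is precisely what routes the secondary pole of the projection to $\tfrac1{2N}$ rather than $\tfrac1{N}$ in \Cref{plusspace}.
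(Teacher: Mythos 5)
Your overall route coincides with the paper's: mock modularity and the pole at $\infty$ from \Cref{Rademachershadow} and \eqref{Z}, the even-order case via \Cref{lemKloost} (and your worry about the twist is easily settled: by the identity \eqref{Kloost2} the multiplier $\psi_{N,v,h}$ only shifts the index $n$ by a multiple of $4$, so the congruence condition in \Cref{lemKloost} is unaffected), and the odd-order case via \Cref{plusspace} with $k=0$, $m=3$. So far this is exactly the paper's argument.

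The genuine gap is in the clause ``vanishes at all other cusps''. You assert that the harmonic completion of $R^{[-3]}_{\psi_{[g]},\frac12}$ vanishes at every cusp other than $\infty$ as something ``built into the construction''; what the Rademacher machinery actually delivers (Appendix E of \cite{CD11}, which is what the paper cites) is only that the sum has its pole at $\infty$ and grows \emph{at most polynomially} at the other cusps. Moreover your parenthetical ``bounded --- indeed vanishing, there being no principal part there'' is a non sequitur: absence of a principal part rules out poles, not a nonzero constant term, and vanishing at a cusp means precisely that the constant term of the expansion there is zero. As written, your argument therefore establishes boundedness, not vanishing, away from $\infty$ and $\tfrac1{2N}$. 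The paper closes exactly this point with a separate step: the vanishing at the remaining cusps ``follows as in \cite[Theorem 3.3]{BJO}'', i.e.\ one examines the expansions of the Maa{\ss}--Poincar\'e series at the other cusps directly and shows the relevant constant terms vanish. You correctly flagged this as the main obstacle, but attributing it to the construction itself, rather than supplying a BJO-type analysis, leaves that part of the proposition unproved.
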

\begin{proof}
As described in Appendix E of \cite{CD11} we see that the Rademacher sums $R^{[-3]}_{\psi_{[g]},\frac 12}(\tau)$ have only a pole of order $3$ at $\infty$ and grow at most polynomially at all other cusps. By \Cref{plusspace,lemKloost} we see that the poles are as described in the proposition. The vanishing at all remaining cusps follows as in \cite[Theorem 3.3]{BJO}.
\end{proof}
\section{Identifying the McKay-Thompson series as modular forms}\label{secModForms}
In this section we want to establish that the multiplicities of each irreducible character are integers. To this end, we first establish the exact modularity and integrality properties of the conjectured McKay-Thompson series $\calF_{[g]}(\tau)$, which are stated without proof in \cite{Harvey}.
\begin{proposition}\label{prop:weak}
For each element $g$ of the Thompson group, the function $Z_{|g|,\psi_{[g]}}(\tau)$ as defined in \eqref{Z} lies in the space $M_{\frac 12}^{+,!}(4|g|,\psi_{[g]})\leq M_{\frac 12}^{+,!}(N_{[g]})$ with $N_{[g]}$ as in \Cref{mult}.
\end{proposition}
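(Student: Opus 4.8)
The plan is to check the four defining properties of membership in $M_{\frac12}^{+,!}(4|g|,\psi_{[g]})$ one at a time---the weight-$\tfrac12$ transformation law under $\Gamma_0(4|g|)$ with multiplier $\psi_{[g]}$, holomorphy on $\HH$, meromorphy at the cusps, and membership in the Kohnen plus space---and then to note that the passage to the level $N_{[g]}$ of \Cref{mult} is automatic. The latter is the simplest point: by \eqref{psi} the character $\psi_{[g]}=\psi_{|g|,v_g,h_g}$ restricts to the trivial character on $\Gamma_0(N_{[g]})$ for the $N_{[g]}$ listed in \Cref{mult}, so $M_{\frac12}^{+,!}(4|g|,\psi_{[g]})\subseteq M_{\frac12}^{+,!}(N_{[g]})$, and this is a direct computation with the definition of $\psi_{N,v,h}$.

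For the transformation law and holomorphy on $\HH$, recall that $Z_{|g|,\psi_{[g]}}$ is, by \eqref{Z}, the image under the plus-space projection of the Rademacher sum $R^{[-3]}_{\psi_{[g]},\frac12}$; by \Cref{Rademacherconv} this sum converges locally uniformly and hence defines a holomorphic function on $\HH$ transforming with weight $\tfrac12$ and multiplier $\psi_{[g]}$ under $\Gamma_0(4|g|)$, and the projection preserves these properties while landing in the plus space (for even $|g|$ the projection is in fact vacuous by \Cref{lemKloost}). The behaviour at the cusps---a pole of order $3$ at $\infty$, a pole of order $\tfrac34$ at $\tfrac1{2|g|}$ when $|g|$ is odd, and holomorphic vanishing at all other cusps---is precisely \Cref{Rademachercusp}, which in turn rests on \Cref{plusspace,lemKloost}. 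At this stage we know $Z_{|g|,\psi_{[g]}}\in H_{\frac12}(4|g|,\psi_{[g]})$, it lies in the plus space, and it has the stated principal parts.

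It remains to upgrade ``harmonic Maa{\ss} form'' to ``weakly holomorphic modular form'', i.e.\ to show that the non-holomorphic part of $Z_{|g|,\psi_{[g]}}$ vanishes. By \Cref{lem:split} and the $\xi$-operator this is equivalent to $\xi_{\frac12}Z_{|g|,\psi_{[g]}}=0$. By \Cref{Rademachershadow} the shadow of $R^{[-3]}_{\psi_{[g]},\frac12}$ is a weight-$\tfrac32$ cusp form proportional to $R^{[3]}_{\psi_{[g]},\frac32}$; applying the (weight-$\tfrac32$) plus-space projection, the shadow $s:=\xi_{\frac12}Z_{|g|,\psi_{[g]}}$ is a cusp form in the weight-$\tfrac32$ Kohnen plus space $S_{\frac32}^{+}(4|g|,\overline{\psi_{[g]}})$. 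I would then compute its Petersson norm via the Bruinier--Funke pairing: $\|s\|^2=\langle s,\xi_{\frac12}Z_{|g|,\psi_{[g]}}\rangle=\{s,Z_{|g|,\psi_{[g]}}\}$. By \Cref{BFpairing}, carried out at each cusp, together with the cusp data of \Cref{Rademachercusp}, this pairing is a finite sum: one term from the order-$3$ pole at $\infty$, equal to a fixed constant times the coefficient of $s$ at exponent $3$, and---only when $|g|$ is odd---one further term from the order-$\tfrac34$ pole at $\tfrac1{2|g|}$. Since $s$ is a cusp form it vanishes at every cusp, and I would argue that the surviving terms vanish as well, using the support condition for the weight-$\tfrac32$ plus space (exponents $\equiv 0,3\pmod4$) together with the explicit local behaviour of $\overline{\psi_{[g]}}$ at $\tfrac1{2|g|}$ coming from \eqref{psi} and \Cref{mult}; this gives $\|s\|=0$, hence $s=0$ and $Z_{|g|,\psi_{[g]}}\in M_{\frac12}^{+,!}(4|g|,\psi_{[g]})$.

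The main obstacle is exactly this last vanishing. Unlike in integral weight one cannot appeal to emptiness of the ambient space of modular forms, and the Bruinier--Funke identity expresses $\|s\|^2$ through honest Fourier coefficients of $s$ rather than through its (vanishing) principal parts, so one must genuinely show those coefficients are zero. If the structural argument via the plus-space support and the multiplier does not dispatch all $48$ conjugacy classes uniformly, the fallback---entirely in keeping with the rest of the paper---is a finite check: for each class one verifies, either from a dimension formula or from the explicit Kloosterman--Bessel expansion of $R^{[3]}_{\psi_{[g]},\frac32}$ together with a Sturm-type bound, that the relevant space $S_{\frac32}^{+}(4|g|,\overline{\psi_{[g]}})$ is trivial, so that $s=0$. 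Everything else---the transformation law, holomorphy on $\HH$, the orders of the poles, plus-space membership, and the inclusion into level $N_{[g]}$---is routine given \Cref{Rademacherconv,Rademachercusp,plusspace,lemKloost}.
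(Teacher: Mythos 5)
Your outline follows the same route as the paper up to the decisive point: convergence, cusp behaviour, plus-space membership, and the reduction of the claim to $\xi_{\frac 12}Z_{|g|,\psi_{[g]}}=0$ via the Bruinier--Funke pairing all match, and the pairing indeed yields that the Petersson norm of the shadow $s$ equals, up to a non-zero constant, the coefficient $b(3)$ of $q^3$ in $s$ (plus, for odd $|g|$, the companion term at $\frac 1{2|g|}$, which the paper absorbs into the constant by noting that the plus-space projection forces that pole). The gap is exactly the step you flag as the main obstacle, and neither of your two proposed ways around it works. The structural route fails because the weight-$\tfrac 32$ plus-space support condition allows exponents $n\equiv 0,3\pmod 4$, so $n=3$ is an admissible exponent and no support or multiplier argument of the kind you sketch can force $b(3)=0$. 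The fallback of proving that $S_{\frac 32}^{+}(4|g|,\overline{\psi_{[g]}})$ is trivial is false in general: the paper's case analysis explicitly separates classes where this space (or the untwisted space $S_{\frac 32}^{+}(N_{[g]})$ it embeds into) vanishes from classes where it does not, and for the latter only the weaker statement that every cusp form in it is $O(q^4)$ holds. Finally, computing the shadow's coefficients from the Kloosterman--Bessel expansion and invoking a Sturm-type bound is not a proof as stated, since exact vanishing cannot be read off a slowly convergent transcendental series without additional integrality or error-control input (the paper deliberately avoids this very issue later, in the proof of \Cref{prop:int}).

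What is needed, and what the paper actually does, is the intermediate finite verification: produce explicit bases of the relevant spaces of cusp forms --- $S_{\frac 32}^{+}(N_{[g]})$ with trivial multiplier via built-in routines in \textsc{Magma}, and $S_{\frac 32}^{+}(4|g|,\overline{\psi_{[g]}})$ in the non-trivial multiplier cases by multiplying by $\vartheta$ and using eta-quotient generators of $M_2(N_{[g]})$ in the spirit of Rouse--Webb --- and check, class by class, that either the space is zero or every element is $O(q^4)$. With that computation in hand your pairing argument closes: $b(3)=0$, hence $\|s\|^2=0$, hence $s=0$ and $Z_{|g|,\psi_{[g]}}\in M_{\frac 12}^{+,!}(4|g|,\psi_{[g]})$. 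So the missing ingredient is this case-by-case computation of bases (not merely dimensions); the remainder of your argument, including the passage to the level $N_{[g]}$, agrees with the paper.
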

\begin{proof}
As we know from \Cref{Rademachershadow}, we have
\[\xi_{\frac 12}Z_{|g|,\psi_{[g]}}\in S^+_\frac 32(4|g|,\overline{\psi_{[g]}})\leq S_\frac 32^+(N_{[g]}).\]
The space $S_\frac 32^+(N_{[g]})$ now turns out to be zero-dimensional for 
\begin{align*}
[g]\in\{& 1A, 2A, 3A, 3B, 3C, 4A, 4B, 5A, 6A, 6B, 6C, 7A, 8A, 9A, 9B, 10A, 12A, 12B, 12C, \\
        &13A, 18A, 36A, 36B, 36C \},
\end{align*}
which is directly verifiable using the built-in functions for spaces of modular forms in for example \textsc{Magma} \cite{Magma}. Furthermore, we have the Bruinier-Funke pairing (see \Cref{BFpairing}) combined with \Cref{Rademachercusp} which tell us that
\begin{equation}\label{eqpairing}
\{\xi_\frac 12 Z_{|g|,\psi_{[g]}},Z_{|g|,\psi_{[g]}}\}\overset{.}=c_{Z_{|g|,\psi_{[g]}}}^-(3),
\end{equation}
where the dot above the equal sign indicated an omitted multiplicative (non-zero) constant. More precisely, we can apply \Cref{BFpairing} to the Rademacher sum $R_{\psi,\frac 12}^{[-3]}$ and then use the same reasoning as in the proof of \Cref{plusspace} to see that projection to the plus space only alters this value by a multiplicative non-zero constant, since the additional pole at the cusp $\frac 1{2|g|}$ (if $|g|$ is odd) is directly forced by the plus space condition. 

From \eqref{eqpairing} we can now deduce, because the Petersson inner product is positive definite on the space of cusp forms, that the shadow of $Z_{|g|,\psi_{[g]}}$ must be $0$ if every $f\in S_\frac 32^+(N_{[g]})$ is $O(q^4)$. Again, this can be checked using built-in features of \textsc{Magma}, therefore showing the claim for 
\[[g]\in\{14A, 19A, 20A, 28A, 31A, 31B\}.\]

For the remaining 18 conjugacy classes, one can use the same arguments as above, but with the refinement that instead of looking at the full space $S_\frac 32^+(N_{[g]})$, one looks at the (usually) smaller space $S_\frac 32^+(4|g|,\overline{\psi_{[g]}})$. Since computing bases for these spaces is not something that a standard computer algebra system can do without any further work, we describe how to go about doing this. Let $f\in S_\frac 32^+(4|g|,\overline{\psi_{[g]}})$ for some conjugacy class $[g]$. Then $f\cdot \vartheta$ is a modular form of weight 2 with the same multiplier and level (respectively trivial multiplier and level $N_{[g]}$). Using programs\footnote{available at \url{http://users.wfu.edu/rouseja/eta/}} written by Rouse and Webb \cite{RouseWebb} one can verify that the algebra of modular forms of level $N_{[g]}$, so in particular the space $M_2(N_{[g]})$, is generated by eta quotients. One can also compute a generating system consisting of eta quotients for all remaining $N_{[g]}$ that still need to be considered. Since one can actually compute Fourier expansions of expressions like
\[(f|\gamma)(\tau)\]
for $f(\tau)=\prod_{\delta|N}\eta(\delta \tau)^{r_\delta}$ an eta quotient and $\gamma\in\SLZ$ explicitly, see e.g. \cite[Proposition 2.1]{eta} it is straight-forward linear algebra to compute a basis of the space $M_2(4|g|,\overline{\psi_{[g]}})$ and from there go down to $S_\frac 32^+(4|g|,\overline{\psi_{[g]}})$. A \textsc{Magma} script computing dimensions and bases of these spaces can be obtained from the second author's homepage. Using this script we find that $\dim S^+_\frac 32(4|g|,\overline{\psi_{[g]}})=0$ for 
\[[g]\in\{8B,9C,12D,15A,15B,24A,24B,24C,24D,27A,27B,27C\}\]
and for all remaining conjugacy classes $[g]$, we find that every $f\in S^+_\frac 32(4|g|,\overline{\psi_{[g]}})$ is $O(q^4)$. This completes the proof.
\end{proof}
\begin{proposition}\label{prop:int}
For each $g\in Th$, the functions $\calF_{[g]}(\tau)=\sum_{n=-3}^\infty c_{[g]}(n)q^n$ as defined in \eqref{calF} are all weakly holomorphic modular forms of weight $\tfrac 12$ for the group $\Gamma_0(N_{[g]})$ in Kohnen's plus space with integer Fourier coefficients at $\infty$.
\end{proposition}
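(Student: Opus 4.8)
The plan is to write $\calF_{[g]}=2Z_{|g|,\psi_{[g]}}+\sum_{m}\kappa_{m,g}\vartheta(m^2\tau)$ as in \eqref{calF} and handle modularity, pole structure, and integrality separately; the first two are bookkeeping on top of \Cref{prop:weak}, and the real content is the integrality. By \Cref{prop:weak} we already have $Z_{|g|,\psi_{[g]}}\in M_{\frac12}^{+,!}(4|g|,\psi_{[g]})\subseteq M_{\frac12}^{+,!}(\Gamma_0(N_{[g]}))$, the level $N_{[g]}$ being chosen in \Cref{mult} so that $\psi_{[g]}$ trivializes on $\Gamma_0(N_{[g]})$. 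Each summand $\vartheta(m^2\tau)$ is a holomorphic weight-$\tfrac12$ form on $\Gamma_0(4m^2)$, and since $m^2\mid h_g|g|$ while $4m^2\mid N_{[g]}$ it lies in $M_{\frac12}(\Gamma_0(N_{[g]}))$; it is supported on square exponents, hence on exponents $\equiv 0,1\pmod4$, and is holomorphic at every cusp, so it lies in the plus space and contributes no poles. Thus $\calF_{[g]}\in M_{\frac12}^{+,!}(\Gamma_0(N_{[g]}))$, and combining with \Cref{Rademachercusp} it has a pole of order $3$ at $\infty$ with $q^{-3}$-coefficient $2$, a pole of order $\tfrac34$ at $\tfrac1{2|g|}$ when $|g|$ is odd, and is holomorphic at every other cusp; this already recovers the pole statements in \Cref{thm:main}.

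For integrality, let $V\subseteq M_{\frac12}^{+,!}(\Gamma_0(N_{[g]}))$ be the finite-dimensional space of forms whose pole orders are bounded by those of $\calF_{[g]}$ just described. The first task is to produce a basis of $V$ consisting of forms with integer Fourier expansions at $\infty$. To do this I would fix a holomorphic modular form $h$ on $\Gamma_0(N_{[g]})$ with integral expansion $q^{j_0}(1+O(q))$ that vanishes at $\infty$ and at $\tfrac1{2|g|}$ to high enough order to absorb the poles allowed in $V$ (such $h$ exists and can be written down as an explicit product of Dedekind eta quotients and theta series, using the Rouse--Webb package \cite{RouseWebb} already invoked in the proof of \Cref{prop:weak}). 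Then $f\mapsto fh$ embeds $V$ into a space of \emph{holomorphic} modular forms of half-integral weight on $\Gamma_0(N_{[g]})$, which carries a basis with integer Fourier coefficients (by the $q$-expansion principle, or concretely via the explicit eta-quotient computations of \cite{RouseWebb,eta}); dividing back by $h$, which preserves $\Z[[q]][q^{-1}]$, and putting the result in echelon form yields an integral basis $g_{-3},g_{j_1},\dots,g_{j_r}$ of $V$ with $g_{-3}=q^{-3}+O(q)$ and $g_{j_i}=q^{j_i}+O(q^{j_i+1})$ for the finitely many non-negative ``gap'' exponents $j_i\equiv 0,1\pmod 4$.

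Writing $\calF_{[g]}=2g_{-3}+\sum_i a_ig_{j_i}$, an easy induction using the echelon shape and the integrality of the $g_{j_i}$ shows that all $a_i\in\Z$ as soon as the finitely many Fourier coefficients $c_{[g]}(j_i)$ of $\calF_{[g]}$ at the gap exponents are integers; so it suffices to prove this. These coefficients I would evaluate directly from the Rademacher expansion \eqref{A}: the Kloosterman--Bessel series converge geometrically and can be computed to any prescribed precision, and the resulting approximations are pinned to the integers they approach using the (structural) fact that the Fourier coefficients of a half-integral weight weakly holomorphic form on $\Gamma_0(N_{[g]})$ with rational principal-part data have bounded denominators — equivalently, by comparison with the explicit integral modular forms tabulated in \cite{Harvey}, whose agreement with $\calF_{[g]}$ is forced by membership in $V$ together with matching principal parts and finitely many matching coefficients. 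As in \Cref{prop:weak} this is organised into a case-by-case inspection of finitely many spaces, one per level $N_{[g]}$; for the classes of order $36$ (and, for \Cref{thm:main}, $12$) an extra coefficient must be checked, reflecting that the corresponding space of holomorphic plus-space forms, spanned by theta series $\vartheta(t\tau)$, is larger.

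The entire difficulty is concentrated in the last step: certifying that the transcendentally defined low-order Rademacher coefficients really are the claimed integers. I expect this to require both the structural input that these coefficients have bounded denominators — so that a high-precision numerical evaluation of the Kloosterman--Bessel sums is conclusive — and that evaluation itself, carried out class by class, with the bookkeeping heaviest for the levels $N_{[g]}$ whose ambient spaces $V$ are largest.
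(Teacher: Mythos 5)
Your reduction of the problem — same decomposition as \eqref{calF}, pole bookkeeping via \Cref{prop:weak} and \Cref{Rademachercusp}, then multiplication by a suitable holomorphic form with integral expansion to land in a finite-dimensional space and reduce integrality to finitely many low-order coefficients — runs parallel to the paper, which multiplies by an integral cusp form $G=q^3+O(q^4)\in S^+_{2k-\frac 12}(\Gamma_0(4|g|))$ and invokes Sturm's bound. The gap is exactly where you say the difficulty is concentrated: your certification of those finitely many coefficients does not work as stated. The Kloosterman--Bessel series \eqref{A} do \emph{not} converge geometrically; the relevant Selberg--Kloosterman zeta function converges only conditionally at $s=\tfrac 34$ (cf.\ \Cref{Rademacherconv} and \Cref{secPos}), and the paper explicitly sets up its argument ``without relying on the rather slow convergence of the Fourier coefficients of the Rademacher series.'' So a rigorous high-precision evaluation with error bounds is not the routine step you treat it as. Moreover, the ``bounded denominators'' property you want to use to pin approximations to integers is asserted, not proved, and is essentially the statement in question; and the appeal to agreement with the forms tabulated in \cite{Harvey} is circular, because verifying the ``finitely many matching coefficients'' you require for that agreement again needs the exact values of the Rademacher coefficients.

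The paper closes this gap differently, and without any numerics on the Rademacher side: using \cite{RouseWebb} one constructs an explicit linear combination of weight-$\tfrac 12$ weakly holomorphic eta quotients with the \emph{same principal part} at $\infty$ (and at $\tfrac 1{2|g|}$, as dictated by \Cref{Rademachercusp}) and matching constant terms; the difference with $\calF_{[g]}$ is then a holomorphic weight-$\tfrac 12$ cusp form with trivial multiplier on $\Gamma_0(N_{[g]})$, which vanishes by the Serre--Stark basis theorem \cite{SerreStark}. This gives an exact closed formula for $\calF_{[g]}$ from which every needed coefficient is computed exactly, and the Sturm bound (largest case $384$ coefficients, for $[g]=24CD$) finishes integrality. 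No matching of further coefficients, no error analysis of conditionally convergent Kloosterman sums, and no bounded-denominator input is needed. If you want to salvage your route, you would have to replace the numerical step by precisely such an exact identification — i.e.\ use that matching principal parts at all cusps plus the constant term already forces equality, because the ambient space of holomorphic weight-$\tfrac 12$ cusp forms is trivial.
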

\begin{proof}
We have established in \Cref{prop:weak} that the Rademacher series $Z_{|g|,\psi_{[g]}}(\tau)$ are all weakly holomorphic modular forms of weight $\tfrac 12$ for $\Gamma_0(N_{[g]})$ in the plus space. The given theta corrections are holomorphic modular forms of the same weight and level, hence so is their sum. Furthermore, theta functions don't have poles, so that all poles of $\calF_{[g]}$ come from the Rademacher series which has a pole of order $3$ only at the cusps of $\Gamma_0(N_{[g]})$ lying above the cusp $\infty$ on the modular curve $X_0(4|g|)$. Hence the function
\[\calF_{[g]}(\tau)\cdot G(\tau),\]
where $G(\tau)=q^3+O(q^4)\in S^+_{2k-\frac 12}(\Gamma_0(4|g|)$ is a cusp form with integer Fourier coefficients, is a weight $2k$ holomorphic modular form with trivial multiplier under the group $\Gamma_0(N_{[g]})$. Now by the choice of $G(\tau)$ we have that $\calF_{[g]}(\tau)$ has integer Fourier coefficients if and only if $\calF_{[g]}(\tau)\cdot G(\tau)$ has integer Fourier coefficients, which is the case if and only if the Fourier coefficients of this modular form are integers up to the Sturm bound \cite{Sturm} 
\[\frac k6[\SLZ:\Gamma_0(N_{[g]})].\]
In order to compute the necessary Fourier coefficients exactly without relying on the rather slow convergence of the Fourier coefficients of the Rademacher series, one can construct linear combinations of weight $\tfrac 12$ weakly holomorphic eta quotients again using the programs\footnote{available at \url{http://users.wfu.edu/rouseja/eta/}} written by Rouse and Webb \cite{RouseWebb} which have the same principal part at $\infty$ (and the related cusp $\frac 1{2N}$) as the Rademacher series (see \Cref{Rademachercusp}) and the same constant terms as the theta corrections if there are any, wherefore their difference must be a weight $\tfrac 12$ holomorphic cusp form for $\Gamma_0(N_{[g]})$ with trivial multiplier, which by the Serre-Stark basis theorem \cite{SerreStark} is easily seen to be $0$\footnote{A list of the eta quotients and linear combinations are available from the second author's homepage}. The largest bound up to which coefficients need to be checked turns out to be $384$ for $[g]=24CD$.
\end{proof}
\begin{remark}\label{except}
As it turns out, the theta correction 
\[f(\tau)=-\vartheta(4\tau)+3\vartheta(36\tau)\]
$[g]=12AB$ that Harvey and Rayhaun \cite[Table 5]{Harvey} give transforms with a different multiplier than the Rademacher series $Z_{12,\psi_{12AB}}(\tau)$: As one computes directly from the fact that
\[-\vartheta(\tau)+3\vartheta(9\tau)\]
transforms with the multiplier $\psi_{3,1,3}$ under the group $\Gamma_0(12)$, $f(\tau)$ transforms under the group $\Gamma_0(48)$ with the multiplier $\psi_{12,1,3}$, while $Z_{12,\psi_{12AB}}(\tau)$ transforms with the multiplier $\psi_{12,7,12}$. But since both multipliers become trivial on the group $\Gamma_0(144)$, the proposition remains valid.
\end{remark}

We can now establish the uniqueness claim in \Cref{thm:main} very easily.
\begin{proposition}
For $[g]\notin\{12A,12B,36A,36B,36C\}$, we have that the function $\calF_{[g]}(\tau)\in M_\frac 12^{!,+}(4|g|,\psi_{[g]})$ is the unique function satisfying the conditions given in \Cref{thm:main}:
\begin{itemize}
\item its Fourier expansion is of the form $2q^{-3}+\chi_2(g)+O(q^4)$ and all its Fourier coefficients are integers.
\item if $|g|$ is odd, then the only other pole of order $\tfrac 34$ is at the cusp $\frac{1}{2|g|}$, otherwise there is only the pole at $\infty$. It vanishes at all other cusps.
\end{itemize}
For $|g|=36$, $\calF_{[g]}(\tau)$ is uniquely determined by additionally fixing the coefficient of $q^4$ to be $\chi_4(g)+\chi_5(g)$.
\end{proposition}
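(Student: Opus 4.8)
The plan is to argue by linearity. Suppose $F,\tilde F\in M_\frac 12^{!,+}(4|g|,\psi_{[g]})$ both satisfy the conditions in the statement and set $h:=F-\tilde F$. Since $M_\frac 12^{!,+}(4|g|,\psi_{[g]})$ is a vector space, $h$ is again a weakly holomorphic modular form of weight $\tfrac12$ in the Kohnen plus space for $\Gamma_0(4|g|)$ with multiplier $\psi_{[g]}$, and the whole point will be to show $h=0$ (for $|g|\neq 36$), respectively $h=0$ after the extra normalisation of the $q^4$-coefficient (for $|g|=36$).

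First I would check that $h$ is holomorphic at every cusp. At $\infty$ the principal parts $2q^{-3}$ cancel and the constant terms $\chi_2(g)$ cancel, so the expansion of $h$ at $\infty$ is $O(q^4)$; in particular $h$ has no pole at $\infty$ and vanishing constant term there. At every cusp other than $\infty$ and, when $|g|$ is odd, $\tfrac1{2|g|}$, both $F$ and $\tilde F$ vanish, hence so does $h$. Finally, for the cusp $\tfrac1{2|g|}$ (the case $|g|$ odd): by the plus-space rigidity underlying \Cref{plusspace}, the order of the pole of a weight $\tfrac12$ plus-space form at $\tfrac1{2|g|}$ is tied to the order of its pole at $\infty$ (the ratio being $4:1$), so a plus-space form which is holomorphic at $\infty$ cannot have a pole at $\tfrac1{2|g|}$ either. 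Consequently $h\in M_\frac 12^{+}(4|g|,\psi_{[g]})$ is a \emph{holomorphic} modular form of weight $\tfrac12$ whose $q$-expansion at $\infty$ is $O(q^4)$.

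Next I would invoke the Serre--Stark basis theorem: after passing, if necessary, to a congruence subgroup $\Gamma_0(M)$ with $4|g|$ dividing $M$ on which the finite-order multiplier $\psi_{[g]}$ trivialises, $h$ becomes a classical holomorphic modular form of weight $\tfrac12$ and is therefore a linear combination of unary theta series $\sum_{n}\chi(n)q^{tn^2}$; descending back, $h$ is a linear combination of the theta series $\vartheta(m^2\tau)$ (with $m^2\mid h_g|g|$) together with their character twists that actually lie in $M_\frac 12^{+}(4|g|,\psi_{[g]})$. A finite computation — enumerating those theta series, exactly as is needed for the definition \eqref{calF} and in the proof of \Cref{prop:int} — then shows that for every $g$ with $|g|\neq 36$ there is no nonzero such combination that is $O(q^4)$ at $\infty$, so $h=0$ and $F=\tilde F$. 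For $|g|=36$ the same computation shows the space of admissible $h$ that are $O(q^4)$ is one-dimensional, spanned by a theta combination whose expansion at $\infty$ begins $cq^4+\dots$ with $c\neq0$; prescribing in addition the coefficient of $q^4$ to equal $\chi_4(g)+\chi_5(g)$ therefore forces $h=0$ as well. The classes $12A$ and $12B$ are excluded because, as recorded in \Cref{except}, $\calF_{[12AB]}$ transforms with a different multiplier than the $\psi_{12AB}$ one would naively attach to it, so the ambient space named in \Cref{thm:main} is not the correct one for those two classes.

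I expect the main obstacle to be the cusp $\tfrac1{2|g|}$: one has to argue carefully that matching only the \emph{order} $\tfrac34$ of the pole there genuinely forces the entire (single-term) principal part to cancel, which rests on the plus-space projection rigidly linking the expansion at $\tfrac1{2|g|}$ to that at $\infty$ — the same mechanism exploited in \Cref{plusspace} and in the proof of \Cref{prop:weak}. A secondary, purely computational obstacle is the bookkeeping of precisely which unary theta series lie in $M_\frac 12^{+}(4|g|,\psi_{[g]})$, i.e. the finite check that pins down the dimension count and, in particular, isolates the one-dimensional exceptional space in the $|g|=36$ case.
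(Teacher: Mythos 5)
Your proposal is correct and takes essentially the same route as the paper: there, too, uniqueness is reduced by linearity to showing that the difference (taken in the paper with the explicit solution $2Z_{|g|,\psi_{[g]}}$, which has the prescribed cusp behaviour) is a holomorphic weight $\tfrac12$ form, which is then killed by the dimension count for that space --- at most two-dimensional except for $|g|=36$, where it is three-dimensional --- obtained from the Serre--Stark theorem when $\psi_{[g]}$ is trivial and from the computation in the proof of \Cref{prop:weak} otherwise. Your treatment of the cusp $\tfrac1{2|g|}$ via plus-space rigidity is the same mechanism the paper invokes (implicitly, as the ``right behaviour at the cusps'' forced by the plus-space condition), and your explanation for excluding $12A,12B$ matches \Cref{except}.
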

\begin{proof}
As we have used already, the function $2Z_{|g|,\psi_{[g]}}(\tau)$ has the right behaviour at the cusps so that $\calF_{[g]}(\tau)-2Z_{|g|,\psi_{[g]}}(\tau)$ is a holomorphic weight $\frac 12$ modular form. As it turns out, in all cases but the one where $|g|=36$, this space is at most two-dimensional, which can be seen by the Serre-Stark basis theorem if $\psi_{[g]}$ is trivial or through a computation similar to the one described in the proof of \Cref{prop:weak} if the multiplier is not trivial. Hence prescribing the constant and first term in the Fourier expansion determines the form uniquely. If $|g|=36$, the space of weight $\frac 12$ modular forms turns out to be $3$-dimensional, so that fixing one further Fourier coefficient suffices to determine the form uniquely. 
\end{proof}

We ultimately want to study the multiplicities of the irreducible characters of $Th$. To this end, we now consider the functions
\[\calF_{\chi_j}(\tau):=\frac{1}{|Th|}\sum_{g\in Th} \overline{\chi_j(g)}\calF_{[g]}=\sum_{n=-3}^\infty m_j(n)q^n\]
with $m_j(n)$ as in \eqref{eq:mults}, the generating functions of the multiplicities. We want to show that all those numbers $m_j(n)$ are integers. A natural approach for this would be to view $\calF_{\chi_j}$ as a weakly holomorphic modular form of weight $\tfrac 12$ and level
\[N_{\chi_j}:=\lcm\{N_{[g]} \ :\ \chi_j(g)\ne 0\}\]
and then use a Sturm bound type argument as in the proof of \Cref{prop:int}. However, these levels turn out to be infeasibly large in most cases. For example we have that
\[N_{\chi_1}=2\,778\,572\,160,\]
so one would have to compute at least a few 100 million Fourier coefficients of $\calF_{\chi_1}$ to make such an argument work, which is entirely infeasible. 

This bound can be reduced substantially however by breaking the problem into many smaller problems involving simpler congruences, each of which requires far fewer coefficients to prove. 

We proceed by a linear algebra argument. Let $\mathbf C$ be the coefficient matrix containing the coefficients of the alleged McKay--Thompson series for each conjugacy class. In theory we have that $\mathbf C$ is a $48\times\infty$ matrix. In practice, we take $\mathbf C$ to be a $48\times B$ matrix with $B$  large. Let $\mathbf X$ be the $48\times 48$ matrix with columns indexed by conjugacy classes of $Th$ and rows indexed by irreducible characters, whose $(\chi_i,[g])$-th entry is 
\[
\mathbf X_{(\chi_i,[g])}=\chi_i(g)\cdot\frac{|[g]|}{|Th|}.
\]
Using the first Schur orthogonality relation for characters,
\[\sum_{g\in Th}\chi_i(g)\chi_j(g)=\begin{cases}|Th| &\text{ if } i=j\\
0& \text{ otherwise,}\end{cases}\] 
we have that the rows of the matrix   
$\mathbf m:=\mathbf X \mathbf C$ which are indexed by the characters $\chi_i$ are exactly the multiplicity values under consideration for the given character.

The matrix $\mathbf C$ does not have full rank. Besides the duplicated series (such as $\mathcal T_{[12A]}=\mathcal T_{[12B]}$), we have additional linear relations given in \Cref{TblLinearRelations}. As some of these relations involve the theta functions used as correction terms in the construction, let us define $\mathbf C^+$ to be the matrix extending $\mathbf C$ to include the coefficients of the theta functions  $\vartheta(n^2\tau)$ , for $n=1,2,3,6,9.$ Then there are matrices $\mathbf N,\mathbf N^*$ of dimensions $48\times 35$ and $35\times 53$ respectively, so that $\mathbf N^* \mathbf C^+$ 
has full rank and 
\[
\mathbf m=\mathbf X \mathbf C=\mathbf X \mathbf N \mathbf N^*\mathbf C^+.
\]

We construct the matrix $\mathbf N^*$ by taking a $53\times 53$ identity matrix indexed by conjugacy classes and removing the rows corresponding to one of each duplicate series and also the highest level conjugacy class (as ordered for instance by \Cref{mult}) appearing in each of the linear relations. The matrix $N$ may be constructed starting with a $35\times 35$ identity matrix, adding in columns to reconstitute the removed conjugacy classes and removing the columns corresponding to the theta series. 

The rows of $\mathbf N^*\mathbf C^+$ exhibit additional congruence as listed in \Cref{secCongruences}. For each prime $p$ dividing the order of the Thompson group, we construct a matrix $\mathbf M_p$ to reduce by these congruences. We start as before with a $35\times 35$ identity matrix. Then for each congruence listed we replace the row of the matrix with index  given by the highest weight conjugacy class appearing in the congruence with a new row constructed to reduce by that congruence. Given the congruence 
\[
\sum_{g} a_g\mathcal T_{[g]}\equiv 0\pmod{p^s}
\]
where the $a_g\in \Z,$ the new row will be given by 
\[ \sum_g p^{-s} a_g\mathfrak e_{[g]},
\]
where $\mathfrak e_{[g]}$ is the elementary basis element.

For instance, in the case $p=7,$ we have the congruence 
\[ \mathcal T_{[1A]}- \mathcal T_{[7A]} \equiv 0\pmod{7^2}.
\]
This tells us that the $[7A]$-th row of $M_7$ should be 
\[
7^{-2} \left(\mathfrak e_{[7A]}- \mathfrak e_{[1A]} \right).
\]

Assuming for the moment the validity of these congruences, we have that $\mathbf M_p\mathbf N^*\mathbf C^+$ is an integer matrix. Moreover, by construction $\mathbf M_p$ is invertible (depending on ordering, we have that $\mathbf M_p$ is lower-triangular with non-vanishing main diagonal). 

In each case we have computationally verified that the matrix
\[
\mathbf X \mathbf N \mathbf M_p^{-1}
\]
is rational with $p$-integral entries. Since 
\[
\mathbf m=\left(\mathbf X \mathbf N \mathbf M_p^{-1}\right)\cdot \left( \mathbf M_p\mathbf N^*\mathbf C^+\right).
\]
 is the product of two $p$-integral matrices, we have that every multiplicity must also be $p$-integral.
 
 The congruences listed in \Cref{secCongruences} were found computationally by reducing the matrix $\left(\mathbf N^*\mathbf C^+\right)\pmod p$ and computing the left kernel. After multiplying by a matrix constructed similar to $\mathbf M_p$ above so as to reduce by the congruences found, the process was repeated. The list of congruences given represents a complete list, in the sense that the matrix 
$\left(\mathbf M_p\mathbf N^*\mathbf C^+\right)$
both is integral and has full rank modulo $p$.
  
Many of the congruences can be easily proven using standard trace arguments for spaces of modular forms of level $pN$ to level $N$. For uniformity we will instead rely on Sturm's theorem following the argument described above. The worst case falls with any congruence involving the conjugacy class $24CD.$ These occur for both primes $p=2$ and $3$. The nature of the congruences, however, do not require us to increase the level beyond the corresponding level $N_{24CD}=1152$. There is a  unique normalized cusp form of weight $19/2$ and level $4$ in the plus space. This form vanishes to order $3$ at the cusp $\infty$ and to order $3/4$ at the cusp $1/2$. This is sufficient so that multiplying by this cusp form moves these potential congruences into spaces of holomorphic modular forms of weight $10,$ level $1152.$ The Sturm bound for this space falls just shy of $2\,000$ coefficients. This bound could certainly be reduced by more careful analysis, but this is sufficient for our needs. The congruences were observed up to $10\, 000$ coefficients. These computations were completed using \textsc{Sage} mathematical software\cite{sage}.

\begin{remark}
A similar process can be used in the case of Monstrous Moonshine to prove the integrality of the Monster character multiplicities. This gives a (probably\footnote{We say probably because the proof of Atkin--Fong--Smith relies on results in Margaret Ashworth's (later Millington) PhD thesis (Oxford University, 1964, advised by A. O. L. Atkin), of which the authors were unable to obtain a copy.}) alternate proof the theorem of Atkin--Fong--Smith \cite{Fong,Smith}. As in the case of Thompson moonshine, we have calculated a list of congruences for each prime dividing the order of the Monster, proven by means of Sturm's theorem. This is list complete in the sense that once we have reduced by the congruences for a given prime, the resulting forms have full rank modulo that prime. The Monster congruences may be of independent interest and are available upon request to the authors. 
\end{remark}
\section{Positivity of the multiplicities}\label{secPos}
To establish the positivity of the multiplicities of the irreducible representations we follow Gannon's work \emph{mutatis mutandis}. First we notice that by the first Schur orthogonality relation for characters and the triangle inequality, for each irreducible representation $\rho$ with corresponding character $\chi$ of $Th$ we have the estimate
\begin{align*}
\mult_{\rho_j}(W_k)&=\sum_{[g]\subseteq Th} \frac{1}{|C(g)|}\str_{W_k}(g)\overline{\chi(g)}\\
&\geq \frac{|\str_{W_k}(1)|}{|G|}\chi(1)-\sum_{[g]\neq [1]}\frac{|\str_{W_k}(g)|}{|C(g)|}|\chi(g)|.
\end{align*}
Here $C(g)$ denotes the centralizer of $g$ in $Th$ and the summation runs over all conjugacy classes of $Th$. Thus to prove positivity, we show that $|\str_{W_k}(1)|$ always dominates all the others. To this end, we use the description of the Fourier coefficients of $\calF_{[g]}(\tau)$ in terms of Maa{\ss}-Poincar\'e series, see \eqref{Z} and \eqref{calF}. We use the following elementary (and rather crude) estimates,
\begin{align*}
&\left|I_{\frac 12}(x)-\sqrt{\frac{2x}{\pi}}\right|\leq \frac{1}{5}\sqrt{\frac{2x^5}{\pi}}\quad \text{for }0<x<1,\\
&0<I_{\frac 12}(x)\leq \frac{e^x}{\sqrt{2\pi x}} \quad\text{for }x>0,\\
&|K_\psi(m,n,4c)|\leq c/2 \quad\text{for all }c\in\N,
\end{align*}
 and set $\delta_c=1+\dodd(Nc)$ which has the obvious bounds $1\leq \delta_c\leq 2$. 

The convergence and bounds of the coefficients $A_{N,\psi}$ rely on the  convergence of the modified \emph{Selberg-Kloosterman zeta function}
\begin{equation}\label{ModifiedZeta}
Z^*_\psi(m,n;s):=\sum_{c=1}^\infty (1-i)(1+\dodd(Nc))\frac{K_\psi(m,n,4Nc)}{(4Nc)^{2s}}.
\end{equation}\label{SKzeta}
The zeta function only converges conditionally at $3/4$. The bounds we obtain are crude and very large, but they do not grow with $n$. 
 
If we set 
\[C_{[g]}(n):=\frac{4N}{\pi \sqrt{2}}\left(\frac{n}{3}\right)^{1/4}A_{N,\psi}=\sum_{c=1}^{\infty}(1-i)(1+\dodd(Nc))\frac{K_{\psi}(-3,n,4Nc)}{4Nc}I_{\frac 12}\left(\frac{\pi\sqrt{3n}}{Nc}\right),
\]
then using the triangle inequality we find 
\[
\left|D\right|- \left|R\right|-\left|\sqrt8(3n)^{1/4}Z^*_\psi(m,n;3/4)\right|  
\leq |C(n)|
\leq \left|D\right|+\left|R\right| + \left|\sqrt8(3n)^{1/4}Z^*_\psi(m,n;3/4)\right|.
\]
Here $R$ is the absolutely convergent sum
\[\sum_{c=2}^{\infty}(1-i)(\delta_c)\frac{K_{\psi}(-3,n,4Nc)}{4Nc}\left(I_{\frac 12}\left(\frac{\pi\sqrt{3n}}{Nc}\right)-\sqrt{\frac{2\sqrt{3n}}{Nc}}\right),
\]
and $D$ is the dominant term coming from the $c=1$ term in the expression for $A_{N,\psi}.$
 We may estimate  $|D|$ by
\begin{align*}
|D|=&\left|(1-i)(\delta_1)\frac{K_{\psi}(-3,n,4N)}{4N}\left(I_{\frac 12}\left(\frac{\pi\sqrt{3n}}{N}\right)-\sqrt{\frac{2\sqrt{3n}}{N}}\right)\right|\\
\leq& \sqrt{2}\frac{\delta_1}{2}\frac{\sqrt{N}}{\pi\sqrt{2}(3n)^\frac 14}e^{\frac{\pi\sqrt{3n}}{N}}\\
\leq& \frac{2\sqrt{N}}{\pi(3n)^\frac 14}e^{\frac{\pi\sqrt{3n}}{N}}.
\end{align*}
Here we have used the second estimate for the Bessel function. We will only be interested in a \emph{lower} bound for $|D|$ when $[g]=[1A].$ In this case, we will just use the exact expression

\[
D_{[1A]}=(-1)^{n}\left(I_{\frac 12}\left(\pi\sqrt{3n}\right)-\sqrt{2}(3n)^{1/4}\right).
\]
 
If we set $L:=\tfrac{\pi}{N}\sqrt{3n}$, the sum for  $2\leq c\leq L$ in $R$ can be estimated as follows:
\begin{align*}
&\left|(1-i)\sum_{2\leq c\leq L}\delta_c\frac{K_{\psi}(-3,n,4Nc)}{4Nc}\left(I_{\frac 12}\left(\frac{\pi\sqrt{3n}}{Nc}\right)-\sqrt{\frac{2\sqrt{3n}}{Nc}}\right)\right|\\
\leq& \sqrt{2}\sum_{2\leq c\leq L}\frac{\delta_c}{2}\frac{\sqrt{Nc}}{\pi\sqrt{2}(3n)^\frac 14}e^{\frac{\pi\sqrt{3n}}{Nc}}
\leq \frac{\sqrt{N}}{\pi(3n)^\frac 14} L^\frac 32 e^{\frac{\pi\sqrt{3n}}{2N}}\\
=& \frac{\sqrt{3\pi n}}{N}e^{\frac{\pi\sqrt{3n}}{2N}}.
\end{align*}

For the terms of $R$ with $c\geq L$ we can use the first estimate on the Bessel function.
\begin{align*}
& \left|(1-i)\sum_{c> L}^{\infty}\delta_c\frac{K_{\psi}(-3,n,4Nc)}{4Nc}\left(I_{\frac 12}\left(\frac{\pi\sqrt{3n}}{Nc}\right)-\sqrt{\frac{2\sqrt{3n}}{Nc}}\right)\right|\\
\leq & \sqrt{2}\left|\sum_{c> L}^{\infty}\frac{\delta}{2}\frac{\sqrt{2}\pi^2 (3n)^\frac 54}{5(Nc)^\frac 54} \right|\\
\leq & \frac{2 \pi^2(3n)^\frac 54}{5N^\frac 52}\zeta\left(\frac 52\right)
\end{align*}

where $\zeta(s)$ denotes the Riemann zeta function. 
We now need only estimate $Z^*_\psi(m,n;\tfrac 34)$. 
 It turns out that we can use Gannon's estimates almost directly once we write the modified zeta function in a form sufficiently similar to the expressions he uses in his estimates. However we will need to slightly modify some of Harvey and Rayhaun's notation for $v$ and $h$. Let $\hat h = \frac{h}{(h,4)}$ and $\hat v=\frac{4\nu}{(h,4)}\pmod{\hat h}$ so that $\hat h\mid (N,24)$ and 
\[
\psi(4Nc,d)=\exp\left(-2\pi i \hat v \frac{cd}{\hat h}\right).
\] 
We note that in every case given we have that $\hat v\equiv \pm 1\pmod {\hat h}.$

With this notation we have that 
\begin{proposition}\label{GeneralThm3}
Let $n\geq 40$ with $D=mn$ a negative discriminant. 
The Selberg--Kloosterman zeta function defined in (\ref{SKzeta}) converges at $s=3/4,$ with the following bounds.

If $N\neq 2$ then
\begin{align*}
|Z^*_\psi(m,n;3/4)|\leq & \frac 14\left(\prod_{p\mid 4N\hat h}\left(1+\frac{1}{p}\right)\right)(1 + 2.13|D|^{1/8} \log |D|)\times\\
& \left((6.124N^{35/6}\hat h^{47/6} - 3.09N^{23/4}\hat h^{31/4} + 64.32N^{29/6}{\hat h}^7 - 23N^{19/4}\hat h^7)|D| \right.\\
&\left.~ +  ~(.146N^{47/6}\hat h^{65/6} - .114N^{31/4}\hat h^{43/4} + 2.51N^{35/6}\hat h^{10} - .74N^{23/4}\hat h^{10})|D|^{3/2}\right).
\end{align*}

If $N=2$, then 
\begin{align*}
|Z_\psi(m,n;3/4)|\leq &\frac 14\left(\prod_{p\mid 4N\hat h}\left(1+\frac{1}{p}\right)\right)(1 + 2.13|D|^{1/8} \log |D|)\times \left(3872|D| + 213|D|^{3/2}\right).
\end{align*}
\end{proposition}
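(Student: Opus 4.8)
The plan is to follow Gannon's treatment of the analogous weight $\tfrac12$ Kloosterman zeta functions in the proof of Mathieu moonshine \emph{mutatis mutandis}, adapting it to the multiplier $\psi=\psi_{N,v,h}$ and to the discriminant $D=mn$ at hand; the engine of the argument is Sali\'e's closed-form evaluation of the half-integral weight exponential sums in \eqref{Kloost}. First I would use the explicit shape $\psi(4Nc,d)=\exp(-2\pi i\hat v\,cd/\hat h)$ together with the definition \eqref{Kloost} and the Chinese Remainder Theorem factorization of Kloosterman sums, exactly as in the proof of \Cref{lemKloost}, to rewrite $K_\psi(-3,n,4Nc)$ as a bounded, explicitly controlled factor times a Sali\'e sum $S(a,b;q)$ with $a,b$ depending on $-3$, $n$, $\hat v$, $\hat h$ and with modulus $q\mid 4N\hat h c$. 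Here the renormalization $\hat h=h/(h,4)$, $\hat v=4v/(h,4)$ introduced just before the statement is precisely what is needed to bring the multiplier into the form used in Gannon's estimates, and the observation (noted in the text) that $\hat v\equiv\pm1\pmod{\hat h}$ keeps the resulting sums within that framework.

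Next I would invoke Sali\'e's evaluation: for odd $q$ one has, up to an explicit normalizing factor, $S(a,b;q)=\eps_q\sqrt q\sum_{x\bmod q,\ x^2\equiv ab}e(2x/q)$, so the modulus-$(4Nc)$ sum becomes $\sqrt{4Nc}$ times a sum over square roots $x$ of a fixed multiple of $D$ modulo $q$ of additive characters $e(2x/q)$. Substituting into \eqref{ModifiedZeta}, the factor $\sqrt{4Nc}$ cancels part of $(4Nc)^{-3/2}$, leaving a double sum over $c\ge1$ and over the square roots $x$ whose summand is, up to bounded factors, $\tfrac{1}{4Nc}e(\alpha_x c)$ for a rational $\alpha_x$ whose denominator divides $4N\hat h$. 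Interchanging the (finite, after truncation) sums and summing first over $c$, one meets sums $\sum_{c\le C}\tfrac1c e(\alpha_x c)$; by Abel summation combined with $\bigl|\sum_{c\le C}e(\alpha c)\bigr|\le\tfrac1{2\lVert\alpha\rVert}$ (with $\lVert\cdot\rVert$ the distance to the nearest integer) these converge as $C\to\infty$ and are bounded in terms of $1/\lVert\alpha_x\rVert$, hence polynomially in $N\hat h$.

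It then remains to bound the number of square roots $x$ of (a multiple of) $D$ modulo $q$; this count is multiplicative, and after separating off the part of the modulus dividing a fixed power of $D$ it is controlled by an explicit divisor-type estimate, which is the origin of the factor $1+2.13\,|D|^{1/8}\log|D|$ and of the Euler product $\prod_{p\mid 4N\hat h}(1+1/p)$ in the statement. Collecting the powers of $N$ and $\hat h$ contributed by the modulus $4N\hat h c$, by Sali\'e's $\sqrt q$, and by the square-root count, and separating the pieces proportional to $|D|$ and to $|D|^{3/2}$, produces the stated bound; the explicit constants $6.124$, $3.09$, $\dots$ are simply the output of this bookkeeping, made uniform in $n$ (using $n\ge40$ to absorb lower-order terms). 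When $N=2$ the modulus $4N=8$ is a prime power, so Sali\'e's evaluation is replaced by the direct $2$-adic analysis of the Kloosterman sums of the kind carried out in \Cref{lemKloost}; this accounts for the separate, simpler bound with constants $3872$ and $213$.

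Throughout, one must keep in mind that the sum defining \eqref{ModifiedZeta} is only conditionally convergent at $s=\tfrac34$: every interchange of summation and every rearrangement above has to be performed on the truncated sums coming from the Rademacher regularization $\Gamma_{K,K^2}(N)$ used in \Cref{Rademacherconv}, and only at the very end does one pass to the limit, using that each inner $c$-sum converges unconditionally by the Abel-summation bound. The main obstacle I expect is exactly this bookkeeping — faithfully tracking the $\Gamma_{K,K^2}$-truncation through the Sali\'e evaluation, the interchange of summations, and the Abel summation, all while keeping every constant explicit and, crucially, independent of $n$ — rather than any single estimate; a secondary technical point is verifying that with the renormalized pair $(\hat v,\hat h)$ the sums match Gannon's lemmas closely enough to be quoted almost verbatim.
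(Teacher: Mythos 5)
Your overall strategy (convert the half-integral weight Kloosterman sums into Sali\'e-type sums over square roots of the discriminant and then import Gannon's explicit estimates) is the same in spirit as the paper, which uses Kohnen's identity \eqref{KohnenFormula} for exactly this purpose. But the core analytic step of your sketch does not work as written. After the Sali\'e/Kohnen evaluation, the phase attached to a square root is $e\bigl(\beta/(2Nc)\bigr)$, where $\beta$ runs over square roots of the (shifted) discriminant modulo $4Nc$; both the phase and the index set of square roots vary with $c$, so the double sum is \emph{not} of the form $\sum_x\sum_c \frac{1}{4Nc}e(\alpha_x c)$ with a rational $\alpha_x$ independent of $c$, and the Abel-summation bound $\bigl|\sum_{c\le C}e(\alpha c)\bigr|\le \tfrac{1}{2\lVert\alpha\rVert}$ has nothing to apply to. The only known way to organize this conditionally convergent double sum with explicit constants is the one the paper takes from Gannon: regroup the pairs $(c,\beta)$ as $\Gamma$-classes of binary quadratic forms $[Nc,\beta,\ast]$ of the \emph{fixed} discriminant $D=mn$, with an inner sum over coprime lattice points $(r,Ns)$ in which $c$ reappears as $Q(r,Ns)/4N$; the class-number bound is what produces the factor $1+2.13|D|^{1/8}\log|D|$ and the Euler product. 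Your proposal never sets up this class decomposition, so those factors cannot be recovered from the argument you describe.

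The second missing ingredient is precisely the new content of the paper's proof. The multiplier $\psi$ shifts $n$ to $\tilde n=n-4\hat v c^2 N/\hat h$ (equation \eqref{Kloost2}), so the naive application of Kohnen's formula produces quadratic forms whose discriminant $m\tilde n$ depends on $c$, and Gannon's lemmas cannot be quoted verbatim. The paper repairs this with the bijection $\varphi_{\hat h,m\hat v}$ onto forms of fixed discriminant $mn$ satisfying a congruence $\gamma\equiv m\hat v c\pmod{\hat h}$, introduces the generalized group $\Gamma_0(N;\hat h,m\hat v)$, and proves \Cref{General5b} (invariance of the genus character on orbits), which is what legitimizes summing over classes in $\mathcal Q_{N;\hat h,\hat v}(mn)/\Gamma_0(N;\hat h,m\hat v)$ before invoking Gannon's bounds. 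Your alternative — absorbing the twist by enlarging the modulus to $4N\hat h c$ and hoping the resulting sums still match Gannon's framework — is not developed enough to substitute for this: one still has to show the sign (genus character) is constant on whatever classes one sums over, and to check Gannon's class-counting bounds apply to the modified family. Without these two pieces the stated constants, the $|D|$ and $|D|^{3/2}$ terms, and indeed the convergence at $s=\tfrac34$ itself are not established.
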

\begin{proof}

Combining equations (\ref{Kloost}) and (\ref{psi}), we can write the Kloosterman sum $K_\psi(m,n,4Nc)$ as
\begin{equation}\label{Kloost2}
K_\psi(m,n,4Nc)=\sideset{}{^*}\sum_{d\smod{4Nc}}\left(\frac {4Nc}{d}\right)\eps_d\exp\left[2\pi i\left(\frac{m\overline{d}+(n-4\hat v c^2\cdot \frac{N}{\hat h})d}{4Nc}\right)\right].
\end{equation}
%
Using a result by Kohnen \cite[Proposition 5]{Kohnen}, we can write our Kloosterman sum as a sum over a more sparse set. Kohnen shows that
\begin{align}\label{KohnenFormula}
\frac{1}{\sqrt{4Nc}}(1-i)&\left(1+\dodd(Nc)\right)K(m,n,4Nc)\\
\nonumber&=\sum_{\substack{\beta \pmod{2Nc}\\ \beta^2\equiv mn\pmod{4Nc}}}\chi_{m}\left(\left[Nc,\beta,\frac{\beta^2-mn}{4Nc} \right]\right)\exp\left[2\pi i \left(\frac{\beta}{2Nc}\right)\right].
\end{align}
Here $[\alpha,\beta,\gamma]$ is a positive definite integral binary quadratic form, in this case with discriminant $mn$, and $\chi_\Delta$ is the genus character defined as follows on integral binary quadratic forms with discriminant divisible by $\Delta$ by
\[
\chi_\Delta\left(Q\right) =\chi_\Delta\left([\alpha,\beta,\gamma]\right):=\begin{cases}
\left(\frac{\Delta}{R}\right)& \text{ if} \operatorname{gcd}(\Delta, \alpha,\beta,\gamma)=1 \text{ where  } Q \text{ represents } R\\
0 &\text{otherwise.}
\end{cases}
\]


We can write equation (\ref{Kloost2}) in this form if we replace $n$ with 
$\tilde n=n-4\hat v c^2\cdot \frac{N}{\hat h}.$ Unfortunately, this makes the sum over a set of quadratic forms with discriminant $m\tilde n$ which depends on $c$. This is not ideal for approximating the zeta function. To fix this, notice that if the quadratic form $Q=[Nc,\beta,\gamma]$ has discriminant $m\tilde n,$ then the form $Q'=[Nc,\beta,\gamma'/\hat h]$ with $\gamma'=\frac{\beta^2-mn}{4Nc/\hat h}$ is a positive definite  binary quadratic form, with discriminant $mn$ and $\gamma'\equiv mv c\pmod{\hat h}.$  This relation defines a bijection between such forms.

Let $\mathcal Q_{N;\hat h,m \hat v }(D)$ denote the set of quadratic forms $Q=[Nc,\beta,\gamma/\hat h]$ of discriminant $D$ with $ c,\beta,\gamma\in \Z$ and $\gamma\equiv m\hat v c\pmod{\hat h},$ 
and let $\mathcal Q_{N}(d)$ denote the set of quadratic forms $Q=[Nc,\beta,\gamma]$ of discriminant $d$. Then we have the bijection
\[
\varphi_{\hat h, m \hat v}:\mathcal Q_{N;\hat h,m \hat v }(D)\to \mathcal Q_{N;}(D-4m \hat v \alpha^2N/\hat h)
\]
defined by 
\[
\varphi_{\hat h, m \hat v} [N\alpha,\beta,\gamma/\hat h] =[N\alpha,\beta,(\gamma-m\hat v \alpha)/\hat h].
\]
We may drop the subscript of $\varphi$ as it will generally be clear from context.

The set $\mathcal Q_{N;\hat h,m \hat v }(D)$ is acted upon by a certain matrix group which we denote by $\Gamma_0(N;h,m\hat v).$ This group consists of matrices  $\begin{pmatrix}a&b/h\\Nc&d\end{pmatrix}$ of determinant $1$ where each letter is an integer satisfying the relations 
$$a\equiv \ell d \pmod {\hat h} \ \ \ \ \text{ and } \ \ \ \ b\equiv \ell m\hat v c \pmod {\hat h}.$$
Here $\ell$ is some number coprime to $\hat h.$ This generalizes the groups  $\Gamma_0(N;h)=\Gamma_0(N;h,1)$ used by Gannon.

\begin{proposition}\label{General5b}
Assume the notation above, and let $Q_1,Q_2\in \mathcal Q_{N;\hat h,\Delta \hat v }(D)$ where $\Delta,D$ are discriminants, with $\Delta$ fundamental and $D$ divisible by $\Delta$. If $Q_1$ and $Q_2$ are related by the action of some $M\in \Gamma_0(N;h,m\hat v),$ then 
\[
\chi_\Delta\left(\varphi 
 Q_1\right)=\chi_\Delta\left(\varphi 
  Q_2\right)
\] 
\end{proposition}
\begin{proof}
Since $\Delta$ is a discriminant, the definition of $\chi_\Delta(Q)$ as a Kronecker symbol allows us to reduce the coefficients of the quadratic form $Q$ modulo $\Delta.$ It is also multiplicative. If $\Delta=\Delta_1\Delta_2$ is a factorization into discriminants, then 
\[
\chi_\Delta(Q)=\chi_{\Delta_1}(Q)\chi_{\Delta_2}(Q).
\]

We will want a factorization of $\Delta$ into discriminants $\Delta=\Delta'\Delta_h$ where $(\Delta',\hat h)=1$ and $\Delta_h$ is divisible only by primes dividing $\hat h.$ Since $\Delta$ is a fundamental discriminant and $\hat h$ divides 24, this means that $|\Delta_h|$ also divides $24.$   

By construction, $\varphi ~Q_1\equiv Q_1\pmod{\Delta'}.$ As a determinant $1$ matrix will not alter the integers represented by a quadratic form, we have that 

\[\chi_{\Delta'}(\varphi ~Q_1)=\chi_{\Delta'}(Q_1)=\chi_{\Delta'}(Q_2)=\chi_{\Delta'}(\varphi ~Q_2)
\]

Suppose $Q_1=[N\alpha,\beta, \gamma/\hat h]$ and  $Q_2=Q_1|M$ where $M$ is the matrix $\begin{pmatrix}a & b/ \hat h\\Nc&d\end{pmatrix}$ with $a\equiv \ell d \pmod{\hat h}.$ 
If  we set $\gamma'= (\gamma-m\hat v \alpha)/\hat h,$ then a short calculation show that 
\[\varphi Q_2\equiv [N(\alpha a^2 +\beta ac), \beta ad, d^2\gamma'] \pmod{\Delta}.
\]
Since $M$ has determinant $1$, we have that $a$ and $d$ are coprime to $\Delta.$ Since $\Delta_h\mid 24,$ we have that $a^2\equiv d^2\equiv 1\pmod{\Delta_h}.$ Moreover, by considering the discriminant  we see that $\beta$ is even if $\Delta_h$ is, and  $4\mid \beta$ if $8\mid \Delta_h.$ In either case, we find that $\Delta_h $ divides $\beta N$. Therefore we may further reduce to 
\[
\varphi Q_2\equiv [N\alpha, \beta \ell, \gamma'] \pmod{\Delta_h}.
\]
 The $\ell$ does not change the possible numbers represented, and so we have that 
\[\chi_{\Delta_h}(\varphi ~Q_1)=\chi_{\Delta_h}(\varphi ~Q_2),
\]
concluding the proof of proposition \ref{General5b}.
\end{proof}

Once again following Gannon
, if we combine equations (\ref{ModifiedZeta}), (\ref{KohnenFormula}) and proposition \ref{General5b}, we can write the modified Selberg--Kloosterman zeta function at $s=3/4$ as 
\begin{align*}
Z^*_\psi(m,n;3/4)=\lim_{X\to \infty}&\sum_{[Q]\in  \mathcal Q_{N;\hat h,\hat v}(mn) / \Gamma_0(N;\hat h,m\hat v)}\frac{\chi_m(\varphi_{\hat h, m \hat v}  Q)}{\omega_Q}\\
&\sum_{\substack{r,Ns\in Z,\\ (r,Ns/h)=1\\ 0<c(Q,r,Ns)<X}}
  \frac{1}{4Nc(Q,r,Ns)} \exp\left(2\pi i \frac{\beta}{2Nc(Q,r,Ns)}\right).
\end{align*}
Here $Q=[N\alpha,\beta,\gamma/\hat h],$  
 $\omega_Q$ is the order of the stabilizer of $Q$ in $\Gamma_0(N;\hat h,m\hat v),$ and $c(Q,r,Ns)=\frac{Q(r,Ns)}{4N}.$

  This equation is analogous to Equation (4.26) of \cite{Gannon}, but differs in four main points: First, we have normalized the zeta function slightly differently. Second, the bijection $\varphi_{\hat h, m\hat v}$ and proposition \ref{General5b} give a more general version of Gannon's Lemma 5(b) allowing us to sum over $\mathcal Q_{N;\hat h,\hat v}(mn) / \Gamma_0(N;\hat h,m\hat v)$ rather than $\mathcal Q_{N;\hat h}(mn) / \Gamma_0(N;\hat h)$. Third, Gannon's case was restricted to discriminants where the stabilizer could only be $\{\pm I\},$ and so he replaces the $\omega_Q$ term with a $2$ in his equation. We will use this as a lower bound for $\omega_Q.$ 
 Fourth, his sum contains a power of $-1$ while ours contains a genus character. In either case, the sign is constant for a given representative quadratic form $Q$.
 
  Gannon estimates the inner sums in absolute value and the outer sum by bounding the number of classes of quadratic forms. His bounds for the size of $\mathcal Q_{N;\hat h}(mn) / \Gamma_0(N;\hat h)$ are crude enough to also hold for the number of classes of $\mathcal Q_{N;\hat h,\hat v}(mn) / \Gamma_0(N;\hat h,m\hat v).$ 
  Proposition \ref{GeneralThm3} follows from using Gannon's bounds modified only to account for our differences in normalization.
\end{proof}

Combining these estimates as described above, we find that each multiplicity of the irreducible components of $W_n$ must alway be positive for $n\geq 375.$ Explicit calculations up to $n=375$ show that these multiplicities are always positive. The worst cases for the estimates with  $n\leq 375$ arise from the trivial character or from estimating for Selberg--Kloosterman zeta function for the $24CD$ conjugacy class. These calculations were performed using \textsc{Sage} mathematical software \cite{sage}. 

\section{Replicability}\label{secRepl}
One important property of the Hauptmoduln occurring in Monstrous Moonshine is that they are \emph{replicable}. 
\begin{definition}\label{repldef}
Let $f(\tau)=q^{-1}+\sum_{n=0}^\infty H_nq^n$ be a (formal) power series with integer coefficients and consider the function
\[F(\tau_1,\tau_2)=\log(f(\tau_1)-f(\tau_2))=\log(q_1^{-1}-q_2^{-1})-\sum_{m,n=1}^\infty H_{m,n}q_1^mq_2^n,\]
where $\tau_1,\tau_2\in\HH$ are two independent variables and $q_j=e^{2\pi i\tau_j}$, $j=1,2$. We call $f$ \emph{replicable}, if we have that $H_{a,b}=H_{c,d}$ whenever $ab=cd$ and $\gcd(a,b)=\gcd(c,d)$.
\end{definition}
This property of the Hauptmoduln involved in Monstrous Moonshine in a sense reflects the algebra structure of the Monstrous-Moonshine module, see \cite{CN79,Nort84}.

An important, but not immediately obvious fact is that any replicable function is determined by its first 23 Fourier coefficients, \cite{FMN,Nort84}.
\begin{theorem}
Let $f(\tau)=q^{-1}+\sum_{n=1}^\infty a_nq^n$ be a replicable function. Then one can compute the coefficient $a_n$ for any $n\in\N$ constructively out of the coefficients 
\[\{a_1,a_2,a_3,a_4,a_5,a_7,a_8,a_9,a_{11},a_{17},a_{19},a_{23}\}.\]
\end{theorem}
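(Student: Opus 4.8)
The plan is to analyze the replication relations that define a replicable function and extract from them an explicit recursion. Recall that if $f(\tau)=q^{-1}+\sum_{n\geq1}a_nq^n$ is replicable, then for each positive integer $t$ one has an associated ``$t$-th replicate'' $f^{(t)}(\tau)=q^{-1}+\sum_{n\geq1}a_n^{(t)}q^n$, whose coefficients are polynomials (with integer coefficients) in the $a_j$, determined by the symmetry $H_{a,b}=H_{c,d}$ whenever $ab=cd$, $\gcd(a,b)=\gcd(c,d)$. The key structural identity is the Faber-polynomial / Hecke-type relation: for every $n\geq1$,
\begin{equation*}
\sum_{ad=n,\ 0\leq b<d} f^{(a)}\!\left(\frac{a\tau+b}{d}\right) = \Phi_n\bigl(f(\tau)\bigr),
\end{equation*}
where $\Phi_n$ is the unique monic degree-$n$ polynomial (the $n$-th Faber polynomial of $f$) with $\Phi_n(f(\tau))=q^{-n}+O(q)$. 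Comparing $q$-expansion coefficients on both sides turns this into relations among the $a_j$ and the replicate coefficients $a_j^{(t)}$.

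First I would set up the bookkeeping: expand $\Phi_n(f)=q^{-n}+\sum_{m\geq 1}c_{n,m}q^m$ and observe that $c_{n,m}$ is a polynomial in $a_1,\dots,a_{m+n-1}$ (from the Faber recursion $\Phi_{n+1}=\Phi_1\Phi_n-\sum\dots$), while the left-hand side contributes $\sum_{ad=n}\sum_{j}a_j^{(a)}\cdot(\text{sum over }b)$, where the inner sum over $b$ kills all terms except those with $d\mid j$, leaving $d\cdot a_{j}^{(a)}$-type contributions at $q$-power $aj/d$. Matching the coefficient of $q^{-n}$ is automatic; matching the coefficient of, say, $q^{0}$ or low positive powers expresses certain $a_j^{(a)}$ in terms of lower data, and matching the coefficient of $q$ in the $n$-th relation yields (after isolating the ``new'' term) a formula for $a_{n-1}$ in terms of $a_1,\dots,a_{n-2}$ together with replicate coefficients $a_j^{(a)}$ for $a\mid n$, $a>1$, at indices $j<n-1$. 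Iterating, every $a_n$ and every $a_n^{(t)}$ is a polynomial with rational (in fact controlled) coefficients in the $a_j$ with $j$ strictly smaller, so by induction everything reduces to $\{a_1,a_2,a_3,\dots\}$.

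The real content is showing the recursion terminates at the twelve-element set $\{a_1,a_2,a_3,a_4,a_5,a_7,a_8,a_9,a_{11},a_{17},a_{19},a_{23}\}$ rather than merely ``all smaller coefficients.'' This is the step I expect to be the main obstacle, and it is genuinely the heart of the theorem (due to Norton, and Ford--McKay--Norton): the naive recursion above expresses $a_n$ in terms of all $a_j$, $j<n$, which is circular-looking and useless without a finer analysis. The trick is to use \emph{two} different replication identities for the same coefficient --- roughly, the relation coming from $n$ and relations coming from divisors/multiples --- and play them against each other to show that, for $n\notin\{1,2,3,4,5,7,8,9,11,17,19,23\}$, the coefficient $a_n$ is over-determined, i.e.\ it already appears (with nonzero coefficient) on the right-hand side of a relation whose remaining entries have index $<n$ and whose index set, after unwinding replicate coefficients via their own lower-index expressions, avoids reintroducing $a_n$. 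Concretely one shows: if $n$ has a divisor $d$ with $1<d<n$, use the level-$d$ and level-$n/d$ replicate structure to pin down $a_n$; the genuinely problematic cases are $n$ prime (and a handful of prime powers), and among primes one checks by hand that $13,29,31,\dots$ are forced by combining the $p$-plication formula with the relation at a nearby composite, leaving exactly $\{2,3,5,7,11,17,19,23\}$ (together with $4,8,9$ for the prime-power subtleties and $1$ as normalization) as irreducible inputs. I would organize this as: (i) reduce to prime and prime-power indices by the divisor argument; (ii) for prime $p$, write the two expressions for $a_p$ arising from the $n=p$ relation and the $n=2p$ (or $3p$) relation, subtract, and solve; (iii) enumerate the finite list of $p$ where both expressions still involve $a_p$ itself --- these are bounded because the ``defect'' shrinks --- and verify directly it is the stated twelve-element set. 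Steps (i) and the clean part of (ii) are routine once the Faber/Hecke identity is in hand; step (iii), the explicit finiteness bound, is the crux and is where one must invoke the detailed combinatorics of \cite{FMN,Nort84}.
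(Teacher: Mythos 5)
The paper does not actually prove this statement: it is quoted as a known result of Norton and Ford--McKay--Norton, cited as \cite{FMN,Nort84}, with the remark that a \textsc{Maple} procedure implementing the recursion is printed at the end of \cite{FMN}. So there is no internal proof to measure your argument against; the only available comparison is with the cited literature, which is also where your own write-up ultimately points.

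Judged as a standalone proof, your proposal has a genuine gap, and you name it yourself. The machinery you set up is the standard and correct one: replicates $f^{(a)}$, the identity $\sum_{ad=n,\,0\le b<d} f^{(a)}\bigl(\tfrac{a\tau+b}{d}\bigr)=\Phi_n(f(\tau))$ with $\Phi_n$ the $n$-th Faber polynomial, and coefficient matching (your bookkeeping of which $q$-powers survive the sum over $b$ is right). But this only yields relations expressing coefficients in terms of \emph{all} lower-index data, and the entire content of the theorem is that the web of relations closes on precisely the twelve indices $\{1,2,3,4,5,7,8,9,11,17,19,23\}$. Your steps (i)--(iii) do not establish this: no single instance of the claimed over-determination is actually derived (e.g.\ no computation showing that $a_6$ or $a_{13}$ is forced while $a_{11}$ is not, or why $a_{16}$ is determined but $a_8$ is not), the ``divisor argument'' of step (i) is asserted rather than proved, and step (iii) explicitly defers the decisive finiteness and enumeration to \cite{FMN,Nort84}. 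In effect the submission is an accurate plan plus a citation --- which, to be fair, mirrors how the paper itself handles the statement --- but it is not a proof; completing it would require deriving the explicit recursions and verifying closure on the stated twelve-element set, which is exactly what the cited references do.
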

A \textsc{Maple} procedure to perform this computation is printed at the end of \cite{FMN}.

In \cite{DGO}, there is also an analogous notion of replicability in the mock modular sense, which requires that the Fourier coefficients satisfy a certain type of recurrence. This is a special phenomenon occuring for mock theta functions, i.e., mock modular forms whose shadow is a unary theta function, satisfying certain growth conditions at cusps, see \cite{IRR13, Mert}.

It is now natural to ask about replicability properties of the McKay-Thompson series in the case of Thompson Moonshine. Let $g$ be any element of the Thompson group and $\calT_{[g]}(\tau)=\calF_{[g]}(\tau)$ be the corresponding McKay-Thompson series as in \eqref{calF}. As we have seen, this is a weekly holomorphic modular form of weight $\tfrac 12$ living in the Kohnen plus space. To relate these to the Hauptmoduln and other replicable functions discussed in \cite{FMN}, we split into an even and an odd part
\[\calT_{[g]}^{(0)}(\tau)=\sum_{m=0}^\infty \str_{W_{4m}}q^{4m}\quad\text{and}\quad \calT_{[g]}^{(0)}(\tau)=\sum_{m=0}^\infty \str_{W_{4m-3}}q^{4m-3}\]
in the notation of \Cref{conj:main}. Letting 
\[\vartheta^{(0)}(\tau)=\vartheta(\tau)=\frac{\eta(2\tau)^5}{\eta(\tau)^2\eta(4\tau)^2}=\sum_{n\in\Z}q^{n^2}\]
and 
\[\vartheta^{(1)}(\tau)=2\frac{\eta(4\tau)^2}{\eta(2\tau)}=\sum_{n\in\Z}q^{\left(n+\frac 12\right)^2}\]
we define the weight $0$ modular functions
\[\t^{(j)}_{[g]}(\tau)=\frac{\calT_{[g]}^{(j)}\left(\frac\tau4\right)}{\vartheta^{(j)}(\tau)},\quad (j=0,1) \]
and we set $\t_{[g]}(\tau)=\t^{(0)}_{[g]}(\tau)+\t^{(1)}_{[g]}(\tau)$. Note that these functions don't have poles in $\HH$.

As it turns out through direct inspection, these weight $0$ functions are often replicable functions or univariate rational functions therein. We used the list of replicable functions given in \cite{FMN} as a reference and found the identities given in Tables \ref{repl} -- \ref{repliii}, which are all identities of the given form using the aforementioned table of replicable functions at the end of \cite{FMN} and allowing the degree of the denominator of the rational function to be as large as $40$.
\newpage
\appendix
\section{Tables}
Here we give the character table of the Thompson group $Th$ over the complex numbers. We set
\[\begin{array}{lll}
A:=-1+2i\sqrt{3},\qquad&
B:=-2+4i\sqrt{3},\qquad &
C:=\frac{-1+i\sqrt{15}}{2}, \\
D:=-i\sqrt{3},&
E:=-i\sqrt{6},&
F:=\frac{-1+3i\sqrt{3}}{2},\\
G:=\frac{-1+i\sqrt{31}}{2},&
H:=-1+i\sqrt{3},&
I:=\frac{-1+i\sqrt{39}}{2},
\end{array}\]
and overlining one of these characters denotes complex conjugation. We used \textsc{Gap4} \cite{gap} to find the character table.
\begin{landscape}
\begin{center}
\begin{table}[htp]
\small
\setlength{\tabcolsep}{2.5pt}
\begin{tabular}{|r|rrrrrrrrrrrrrrrrrrrrrrrr|}
\hline
     &   1A &    2A  &    3A   &  3B  &   3C   &    4A    &  4B  &  5A &   6A  &  6B  &  6C  &   7A  &  8A  &  8B  &   9A & 9B & 9C & 10A & 12A & 12B & 12C & 12D & 13A & 14A\\
     \hline
$\chi_1$    &      1  &   1  &   1 &   1 &   1  &  1 &  1 &  1 &  1 &  1 &  1 &  1 & 1 & 1 &  1 &
		           1  &   1  &   1 &   1 &   1  &  1 &  1 &  1 &  1 \\
$\chi_2$    &    248  &  -8  &  14 &   5 &  -4  &  8 &  0 & -2 &  4 & -2 &  1 &  3 & 0 & 0 &  5 &
				  -4  &   2  &   2 &   2 &   2  & -1 &  0 &  1 & -1 \\
$\chi_3$    &   4123  &  27  &  64 &  -8 &   1  & 27 & -5 & -2 &  9 &  0 &  0 &  7 & 3 &-1 & -8 & 
				   1  &   4  &   2 &   0 &   0  &  0 &  1 &  2 & -1 \\
$\chi_4$    &  27000  & 120  & -27 &  27 &   0  &  8 &  0 &  0 &  0 & -3 &  3 &  1 & 0 & 0 &  0 &
			       0  &   0  &   0 &  $A$ &  $\overline{A}$ &  -1 &  0 &  -1 &  1\\
$\chi_5$    &  27000  & 120  & -27 &  27 &   0  &  8 &  0 &  0 &  0 & -3 &  3 &  1 & 0 & 0 &  0 &
				   0  &   0  &   0 &  $\overline{A}$ &  $A$ &  -1 &  0 &  -1 &  1\\
$\chi_6$    &  30628  & -92  &  91 &  10 &  10  & 36 &  4 &  3 & 10 & -5 & -2 &  3 &-4 & 0 & 10 &
			      10  &   1  &   3 &   3 &   3  &  0 & -2 &  0 & -1 \\
$\chi_7$    &  30875  & 155  & 104 &  14 &   5  & 27 & -5 &  0 &  5 &  8 &  2 &  5 & 3 &-1 & 14 &
			       5  &   2  &   0 &   0 &   0  &  0 &  1 &  0 &  1 \\
$\chi_8$    &  61256  &  72  & 182 &  20 &  20  & 56 &  0 &  6 & 12 &  6 &  0 &  6 & 0 & 0 & -7 &
				  -7  &   2  &   2 &   2 &   2  &  2 &  0 &  0 &  2 \\
$\chi_9$    &  85995  & -21  &   0 & -27 &  27  &-21 & 11 & -5 &  3 &  0 & -3 &  0 & 3 &-1 &  0 & 
                   0  &   0  &  -1 &   0 &   0  & -3 & -1 &  0 &  0 \\
$\chi_{10}$ &  85995  & -21  &   0 & -27 &  27  &-21 & 11 & -5 &  3 &  0 & -3 &  0 & 3 &-1 &  0 & 
                   0  &   0  &  -1 &   0 &   0  & -3 & -1 &  0 &  0 \\
$\chi_{11}$ & 147250  &  50  & 181 &  -8 & -35  & 34 & 10 &  0 &  5 &  5 & -4 &  5 & 2 &-2 & 19 & 
				  -8  &   1  &   0 &   1 &   1  & -2 &  1 & -1 &  1 \\
$\chi_{12}$ & 767637  & 405  &   0 &   0 &   0  &-27 & -3 & 12 &  0 &  0 &  0 &  3 &-3 &-3 &  0 & 
				   0  &   0  &   0 &   0 &   0  &  0 &  0 &  0 & -1 \\
$\chi_{13}$ & 767637  & 405  &   0 &   0 &   0  &-27 & -3 & 12 &  0 &  0 &  0 &  3 &-3 &-3 &  0 &
 				   0  &   0  &   0 &   0 &   0  &  0 &  0 &  0 & -1 \\
$\chi_{14}$ & 779247  &-273  &-189 & -54 &   0  & 63 & -9 & -3 &  0 &  3 &  6 &  0 &-1 & 3 &  0 & 
				   0  &   0  &  -3 &   3 &   3  &  0 &  0 &  1 &  0 \\
$\chi_{15}$ & 779247  &-273  &-189 & -54 &   0  & 63 & -9 & -3 &  0 &  3 &  6 &  0 &-1 & 3 &  0 &
				   0  &   0  &  -3 &   3 &   3  &  0 &  0 &  1 &  0 \\
$\chi_{16}$ & 957125  &-315  & 650 & -52 & -25  &133 &  5 &  0 & 15 & -6 &  0 &  8 &-3 & 1 &-25 &
				   2  &   2  &   0 &  -2 &  -2  & -2 & -1 &  0 &  0 \\
$\chi_{17}$ &1707264  &-768  &   0 & -54 &  54  &  0 &  0 & 14 & -6 &  0 &  6 &  6 & 0 & 0 &  0 &
			       0  &   0  &   2 &   0 &   0  &  0 &  0 &  0 &  2 \\
$\chi_{18}$ &1707264  &-768  &   0 & -54 &  54  &  0 &  0 & 14 & -6 &  0 &  6 &  6 & 0 & 0 &  0 &
      			   0  &   0  &   2 &   0 &   0  &  0 &  0 &  0 &  2 \\
$\chi_{19}$ &2450240  & 832  & 260 &  71 &  44  & 64 &  0 &-10 &  4 &  4 & -5 & -5 & 0 & 0 & 17 &
				 -10  &  -1  &   2 &   4 &   4  &  1 &  0 &  0 & -1 \\
$\chi_{20}$ &2572752  &-1072 & 624 & 111 &  84  & 48 &  0 &  2 & -4 &-16 & -1 &  7 & 0 & 0 & 30 &
				   3  &    3 &  -2 &   0 &   0  &  3 &  0 &  0 & -1 \\
$\chi_{21}$ &3376737  & 609  & 819 &   9 &   9  &161 &  1 &-13 &  9 &  3 & -3 &  0 & 1 & 1 &  9 &
				   9  &   0  &  -1 &  -1 &  -1  & -1 &  1 &  0 &  0 \\
$\chi_{22}$ &4096000  &   0  &  64 &  -8 & -80  &  0 &  0 &  0 &  0 &  0 &  0 & -8 & 0 & 0 & -8 &
				   1  &   4  &   0 &   0 &   0  &  0 &  0 & -1 &  0 \\
$\chi_{23}$ &4096000  &   0  &  64 &  -8 & -80  &  0 &  0 &  0 &  0 &  0 &  0 & -8 & 0 & 0 & -8 &
				   1  &   4  &   0 &   0 &   0  &  0 &  0 & -1 &  0 \\
$\chi_{24}$ &4123000  & 120  & 118 &  19 & -80  &  8 &  0 &  0 &  0 &  6 &  3 & -7 & 0 & 0 & 19 & 
				   1  &   4  &   0 &   2 &   2  & -1 &  0 & -2 &  1 \\
\hline
\end{tabular}
\caption{Character table of $Th$, Part I.}
\label{char1}
\end{table}
\end{center}
\end{landscape}
\newpage
\begin{landscape}
\begin{center}
\begin{table}[htp]
\small
\setlength{\tabcolsep}{2.5pt}
\begin{tabular}{|r|rrrrrrrrrrrrrrrrrrrrrrrr|}
\hline
     &   1A &    2A  &    3A   &  3B  &   3C   &    4A    &  4B  &  5A &   6A  &  6B  &  6C  &   7A  &  8A  &  8B  &   9A & 9B & 9C & 10A & 12A & 12B & 12C & 12D & 13A & 14A\\
     \hline
$\chi_{25}$ &   4881384 &  1512 &  729 &   0 &   0 &  72 & 24 &  9 &  0 &  9 & 0 &  4 & 8 & 0 &  0 &
 					  0 &     0 &   -3 &  -3 &  -3 &   0 &  0 &  1 &  0 \\			
$\chi_{26}$ &   4936750 &  -210 &  637 & -38 & -65 & 126 &-10 &  0 & 15 & -3 & 6 &  0 &-2 & 2 & 16 &
					-11 &    -2 &    0 &  -3 &  -3 &   0 & -1 &  0 &  0 \\
$\chi_{27}$ &   6669000 & -1080 & -351 & 108 &   0 &  56 &  0 &  0 &  0 &  9 & 0 &  2 & 0 & 0 &  0 &
					  0 &     0 &    0 &  $A$ & $\overline{A}$ &   2 &  0   & 0 &  -2 \\
$\chi_{28}$ &   6669000 & -1080 & -351 & 108 &   0 &  56 &  0 &  0 &  0 &  9 & 0 &  2 & 0 & 0 &  0 &
			          0 &     0 &    0 &  $\overline{A}$ & $A$ &   2 &  0   & 0 &  -2 \\
$\chi_{29}$ &   6696000 &  -960 & -378 & 135 &   0 &  64 &  0 &  0 &  0 &  6 & 3 &  3 & 0 & 0 &  0 &
					  0 &     0 &    0 &  $B$ & $\overline{B}$  &  1 & 0 & -1 & -1 \\ 
$\chi_{30}$ &   6696000 &  -960 & -378 & 135 &   0 &  64 &  0 &  0 &  0 &  6 & 3 &  3 & 0 & 0 &  0 &
					  0 &     0 &    0 &  $\overline{B}$ & $B$  &  1 & 0 & -1 & -1 \\ 
$\chi_{31}$ &  10822875 &  -805 &  924 & 141 & -75 &  91 & -5 &  0 &  5 & -4 & 5 &  0 & 3 &-1 &-21 &
					  6 &    -3 &    0 &   4 &   4 &   1 &  1 & -2 &  0 \\ 
$\chi_{32}$ &  11577384 &   552 &  351 & 135 &   0 &-120 & 24 &  9 &  0 & 15 & 3 &  7 &-8 & 0 &  0 &
					  0 &     0 &   -3 &   3 &   3 &  -3 &  0 &  0 & -1 \\
$\chi_{33}$ &  16539120 &  2544 &    0 & 297 & -54 &  48 & 16 & -5 & -6 &  0 &-3 &  3 & 0 & 0 &  0 &
					  0 &     0 &   -1 &   0 &   0 &   3 & -2 &  0 &  3 \\
$\chi_{34}$ &  18154500 &  1540 & -273 & 213 & -30 & -28 & 20 &  0 & 10 &-17 & 1 &  0 &-4 & 0 & -3 &
					 -3 &    -3 &    0 &  -1 &  -1 &  -1 &  2 &  0 &  0 \\ 
$\chi_{35}$ &  21326760 &   168 &    0 &-135 &-108 &-168 &  0 & 10 & 12 &  0 &-3 &  0 & 0 & 0 &  0 &
					  0 &     0 &   -2 &   0 &   0 &   3 &  0 &  0 &  0 \\ 
$\chi_{36}$ &  21326760 &   168 &    0 &-135 &-108 &-168 &  0 & 10 & 12 &  0 &-3 &  0 & 0 & 0 &  0 &
					  0 &     0 &   -2 &   0 &   0 &   3 &  0 &  0 &  0 \\ 
$\chi_{37}$ &  28861000 &   840 & 1078 &-110 & 160 &  56 &  0 &  0 &  0 &  6 &-6 &  0 & 0 & 0 &-29 &
					 -2 &    -2 &    0 &   2 &   2 &   2 &  0 & -1 &  0 \\
$\chi_{38}$ &  30507008 &     0 &  896 &-184 &  32 &   0 &  0 &  8 &  0 &  0 & 0 &  0 & 0 & 0 & 32 &
					  5 &    -4 &    0 &   0 &   0 &   0 &  0 & -1 &  0 \\ 
$\chi_{39}$ &  40199250 &  3410 &  -78 &   3 & 165 & -62 & 10 &  0 &  5 &  2 &-1 & -7 &-6 & 2 &  3 &
					  3 &     3 &    0 &  -2 &  -2 &   1 &  1 &  0 &  1 \\
$\chi_{40}$ &  44330496 &  3584 &  168 &   6 &-156 &   0 &  0 & -4 & -4 &  8 & 2 &  0 & 0 & 0 &  6 &
					  6 &    -3 &    4 &   0 &   0 &   0 &  0 &  2 &  0 \\
$\chi_{41}$ &  51684750 &  2190 &    0 & 108 & 135 &-162 &-10 &  0 & 15 &  0 &12 & -9 & 6 &-2 &  0 &
					  0 &     0 &    0 &   0 &   0 &   0 & -1 &  0 & -1 \\
$\chi_{42}$ &  72925515 & -2997 &    0 &   0 &   0 &  27 & 51 & 15 &  0 &  0 & 0 & -9 & 3 & 3 &  0 &
					  0 &     0 &    3 &   0 &   0 &   0 &  0 &  0 & -1 \\
$\chi_{43}$ &  76271625 & -2295 &  729 &   0 &   0 & 153 &-15 &  0 &  0 &  9 & 0 &-11 &-7 &-3 &  0 &
					  0 &     0 &    0 &  -3 &  -3 &   0 &  0 &  1 &  1 \\
$\chi_{44}$ &  77376000 &  2560 & 1560 & -60 & -60 &   0 &  0 &  0 &-20 & -8 & 4 &  2 & 0 & 0 & -6 &
					  0 &     0 &   -3 &   3 &   3 &   0 &  0 &  2 &  1 \\ 
$\chi_{45}$ &  81153009 &  -783 & -729 &   0 &   0 & 225 &  9 &  9 &  0 & -9 & 0 & -7 & 1 &-3 &  0 &
					  0 &     0 &   -3 &   3 &   3 &   0 &  0 &  2 &  1 \\
$\chi_{46}$ &  91171899 &   315 &    0 & 243 &   0 & -21 &-45 & 24 &  0 &  0 &-9 &  0 & 3 & 3 &  0 &
					  0 &     0 &    0 &   0 &   0 &  -3 &  0 &  0 &  0 \\ 
$\chi_{47}$ & 111321000 &  3240 &-1728 &-216 &   0 & 216 &  0 &  0 &  0 &  0 & 0 &  7 & 0 & 0 &  0 &
					  0 &     0 &    0 &   0 &   0 &   0 &  0 & -2 & -1 \\
$\chi_{48}$ & 190373976 & -3240 &    0 &   0 &   0 &-216 &  0 &-24 &  0 &  0 & 0 &  9 & 0 & 0 &  0 &
					  0 &     0 &    0 &   0 &   0 &   0 &  0 &  0 &  1 \\
\hline
\end{tabular}
\caption{Character table of $Th$, Part II.}
\label{char2}
\end{table}
\end{center}
\end{landscape}

\newpage
\begin{landscape}
\begin{center}
\begin{table}[htp]
\small
\setlength{\tabcolsep}{2.5pt}
\begin{tabular}{|r|rrrrrrrrrrrrrrrrrrrrrrrr|}
\hline
 & 15A & 15B & 18A & 18B & 19A & 20A & 21A & 24A & 24B & 24C & 24D & 27A & 27B & 27C & 28A & 30A & 30B & 31A & 31B & 36A & 36B & 36C & 39A & 39B \\
 \hline
$\chi_{1}$  &       1 &  1 &  1 &  1 &  1 &  1 &  1 &  1 &  1 &  1 &  1 &  1 &  1 &  1 &  1 &  1 &   1 &  1 &  1 &  1 &  1 &  1 &  1 &  1 \\
$\chi_{2}$  &       1 &  1 &  1 & -2 &  1 &  0 &  0 &  0 &  0 &  0 &  0 &  2 & -1 & -1 &  1 & -1 & -1 &  0 &  0 & -1 &  -1 & -1 &  1 &  1 \\
$\chi_{3}$  &     1 &  1 &  0 &  0 &  0 &  0 &  1 &  0 &  0 & -1 & -1 & -2 &  1 &  1 & -1 & -1 & -1 &  0 &  0 &  0 &  0 &  0 & -1 &  -1 \\
$\chi_{4}$  &       0 &  0 &  0 &  0 &  1 &  0 &  1 &  $D$ & $-D$ &  0 &  0 &  0 &  0 &  0 &  1 &  0 &  0 & -1 & -1 &  2 & $H$ & $\overline{H}$ &  -1 &  -1 \\
$\chi_{5}$  &       0 &  0 &  0 &  0 &  1 &  0 &  1 & $-D$ &  $D$ &  0 &  0 &  0 &  0 &  0 &  1 &  0 &  0 & -1 & -1  & 2 & $\overline{H}$ & $H$ &  -1 & -1 \\
$\chi_{6}$  &       0 &  0 & -2 &  1 &  0 & -1 &  0 & -1 & -1 &  0 &  0 &  1 &  1 &  1 &  1 &  0 &  0 &  0 &  0 &  0 &  0 &  0 &  0 &  0 \\
$\chi_{7}$  &       0 &  0 &  2 &  2 &  0 &  0 & -1 &  0 &  0 & -1 & -1 &  2 & -1 & -1 & -1 &  0 &  0 & -1 & -1 &  0 &  0 &  0 &  0 &  0 \\
$\chi_{8}$  &        0 &  0 & -3 &  0 &  0 &  0 &  0 &  0 &  0 &  0 &  0 & -1 & -1 & -1 &  0 &  2 &  2 &  0 &  0 & -1 & -1 & -1 &  0 &  0 \\
$\chi_{9}$  &       $C$ & $\overline{C}$ &   0 &  0 &  1 &  1 &  0 &  0 &  0 & -1 & -1 &  0 &  0 &  0 &  0 & -$C$ & $-\overline{C}$ &   1 &  1  & 0 &  0 &  0 &  0 &  0 \\
$\chi_{10}$ &     $\overline{C}$ &   $C$ &   0 &  0 &  1 &  1 &  0 &  0 &  0 & -1 & -1 &  0 &  0 &  0 &  0 & $-\overline{C}$ &  $-C$ &  1 &  1 &  0 &  0 &  0 &  0 &  0 \\
$\chi_{11}$ &      0 &  0 & -1 & -1 &  0 &  0 & -1 & -1 & -1 &  1 &  1 &  1 &  1 &  1 & -1 &  0 &  0 &  0 &  0 &  1 &  1 &  1 & -1 & -1 \\
$\chi_{12}$ &      0 &  0 &  0 &  0 & -1 &  2 &  0 &  0 &  0 &  0 &  0 &  0 &  0 &  0 &  1 &  0 &  0 &  $G$ &  $\overline{G}$ &  0 &  0 &  0 &  0 &  0 \\
$\chi_{13}$ &      0 &  0 &  0 &  0 & -1 &  2 &  0 &  0 &  0 &  0 &  0 &  0 &  0 &  0 &  1 &  0 &  0 & $\overline{G}$ &  $G$  &  0 &  0 &  0 &  0 &  0 \\
$\chi_{14}$ &      0 &  0 &  0 &  0 &  0 &  1 &  0 & -1 & -1 &  0 &  0 &  0 &  0 &  0 &  0 &  0 &  0 &  0 &  0 &  0 &  0 &  0 &  $I$ & $\overline{I}$ \\
$\chi_{15}$ &      0 &  0 &  0 &  0 &  0 &  1 &  0 & -1 & -1 &  0 &  0 &  0 &  0 &  0 &  0 &  0 &  0 &  0 &  0 &  0 &  0 &  0 & $\overline{I}$ & $I$ \\
$\chi_{16}$ &      0 &  0 &  3 &  0 &  0 &  0 & -1 &  0 &  0 &  1 &  1 & -1 & -1 & -1 &  0 &  0 &  0 &  0 &  0 &  1 &  1 &  1 &  0 &  0 \\
$\chi_{17}$ &     -1 & -1 &  0 &  0 &  0 &  0 &  0 &  0 &  0 &  $E$ & $-E$ &  0 &  0 &  0 &  0 & -1 & -1 &  1 &  1 &  0 &  0 &  0 &  0 &  0 \\
$\chi_{18}$ &     -1 & -1 &  0 &  0 &  0 &  0 &  0 &  0 &  0 & $-E$ &  $E$ &  0 &  0 &  0 &  0 & -1 & -1 &  1 &  1 &  0 &  0 &  0 &  0 &  0 \\
$\chi_{19}$ &     -1 & -1 &  1 &  1 &  0 &  0 &  1 &  0 &  0 &  0 &  0 & -1 & -1 & -1 &  1 & -1 & -1 &  0 &  0 &  1 &  1 &  1 &  0 &  0 \\
$\chi_{20}$ &     -1 & -1 &  2 & -1 &  0 &  0 &  1 &  0 &  0 &  0 &  0 &  0 &  0 &  0 & -1 &  1 &  1 &  0 &  0 &  0 &  0 &  0 &  0 &  0 \\
$\chi_{21}$ &     -1 & -1 & -3 &  0 &  0 &  1 &  0 &  1 &  1 &  1 &  1 &  0 &  0 &  0 &  0 & -1 & -1 &  0 &  0 & -1 & -1 & -1 &  0 &  0 \\
$\chi_{22}$ &      0 &  0 &  0 &  0 & -1 &  0 &  1 &  0 &  0 &  0 &  0 &  1 &  $F$ & $\overline{F}$ &  0 &  0 &  0 &  1 &  1 &  0 &  0 &  0 & -1 & -1 \\
$\chi_{23}$ &      0 &  0 &  0 &  0 & -1 &  0 &  1 &  0 &  0 &  0 &  0 &  1 & $\overline{F}$ &  $F$ &  0 &  0 &  0 &  1 &  1 &  0 &  0 &  0 & -1 & -1 \\
$\chi_{24}$ &      0 &  0 &  3 &  0 &  0 &  0 & -1 &  0 &  0 &  0 &  0 & -2 &  1 &  1 &  1 &  0 &  0 &  0 &  0 & -1 & -1 & -1 &  1 &  1 \\
\hline
\end{tabular}
\caption{Character table of $Th$, Part III.}
\label{char3}
\end{table}
\end{center}
\end{landscape}

\newpage
\begin{landscape}
\begin{center}
\begin{table}[htp]
\small
\setlength{\tabcolsep}{2.5pt}
\begin{tabular}{|r|rrrrrrrrrrrrrrrrrrrrrrrr|}
\hline
 & 15A & 15B & 18A & 18B & 19A & 20A & 21A & 24A & 24B & 24C & 24D & 27A & 27B & 27C & 28A & 30A & 30B & 31A & 31B & 36A & 36B & 36C & 39A & 39B \\
 \hline
$\chi_{24}$ &      0 &  0 &  3 &  0 &  0 &  0 & -1 &  0 &  0 &  0 &  0 & -2 &  1 &  1 &  1 &  0 &  0 &  0 &  0 & -1 & -1 & -1 &  1 &  1 \\
$\chi_{25}$ &      0 &  0 &  0 &  0 & -1 & -1 &  1 & -1 & -1 &  0 &  0 &  0 &  0 &  0 &  2 &  0 &  0 &  0 &  0 &  0 &  0 &  0 &  1 &  1 \\
$\chi_{26}$ &      0 &  0 &  0 &  0 & -1 &  0 &  0 &  1 &  1 & -1 & -1 &  1 &  1 &  1 &  0 &  0 &  0 &  0 &  0 &  0 &  0 &  0 &  0 &  0 \\
$\chi_{27}$ &      0 &  0 &  0 &  0 &  0 &  0 & -1 & $-D$ &  $D$ &  0 &  0 &  0 &  0 &  0 &  0 &  0 &  0 &  1 &  1 &  2 &  $H$ & $\overline{H}$ &  0 &  0 \\
$\chi_{28}$ &      0 &  0 &  0 &  0 &  0 &  0 & -1 &  $D$ & $-D$ &  0 &  0 &  0 &  0 &  0 &  0 &  0 &  0 &  1 &  1 &  2 & $\overline{H}$ &   $H$ &  0 &  0 \\
$\chi_{29}$ &      0 &  0 &  0 &  0 &  1 &  0 &  0 &  0 &  0 &  0 &  0 &  0 &  0 &  0 &  1 &  0 &  0 &  0 &  0 & -2 & $-H$ & $-\overline{H}$ &  -1 & -1 \\
$\chi_{30}$ &      0 &  0 &  0 &  0 &  1 &  0 &  0 &  0 &  0 &  0 &  0 &  0 &  0 &  0 &  1 &  0 &  0 &  0 &  0 & -2 & $-\overline{H}$ &  $-H$ & -1 & -1 \\
$\chi_{31}$ &      0 &  0 & -1 & -1 &  0 &  0 &  0 &  0 &  0 & -1 & -1 &  0 &  0 &  0 &  0 &  0 &  0 &  0 &  0 &  1 &  1 &  1 &  1 &  1 \\
$\chi_{32}$ &      0 &  0 &  0 &  0 &  0 & -1 &  1 &  1 &  1 &  0 &  0 &  0 &  0 &  0 & -1 &  0 &  0 &  0  & 0 &  0 &  0 &  0 &  0 &  0 \\
$\chi_{33}$ &      1 &  1 &  0 &  0 &  0 &  1 &  0 &  0 &  0 &  0 &  0 &  0 &  0 &  0 & -1 & -1 & -1 &  0 &  0 &  0 &  0 &  0 &  0 &  0 \\
$\chi_{34}$ &      0 &  0 &  1 &  1 &  0 &  0 &  0 & -1 & -1 &  0 &  0 &  0 &  0 &  0 &  0 &  0 &  0 &  1 &  1 & -1 & -1 & -1 &  0 &  0 \\
$\chi_{35}$ &      $C$ & $\overline{C}$ &   0 &  0 &  1 &  0 &  0 &  0 &  0 &  0 &  0 &  0 &  0 &  0 &  0 &  $C$ & $\overline{C}$ &  0 &  0 &  0 &  0 &  0 &  0 &  0 \\
$\chi_{36}$ &     $\overline{C}$ &   $C$ &  0 &  0 &  1 &  0 &  0 &  0 &  0 &  0 &  0 &  0 &  0 &  0 &  0 & $\overline{C}$ &  $C$ &  0 &  0 &  0 &  0 &  0 &  0 &  0 \\
$\chi_{37}$ &      0 &  0 &  3 &  0 &  0 &  0 &  0 &  0 &  0 &  0 &  0 &  1 &  1 &  1 &  0 &  0 &  0 &  0 &  0 & -1 & -1 & -1 & -1 & -1 \\
$\chi_{38}$ &      2 &  2 &  0 &  0 &  0 &  0 &  0 &  0 &  0 &  0 &  0 & -1 & -1 & -1 &  0 &  0 &  0 &  1 &  1 &  0 &  0 &  0 & -1 & -1 \\
$\chi_{39}$ &      0 &  0 & -1 & -1 &  0 &  0 & -1 &  0 &  0 & -1 & -1 &  0 &  0 &  0 &  1 &  0 &  0 &  0 &  0 &  1 &  1 &  1 &  0 &  0 \\
$\chi_{40}$ &     -1 & -1 &  2 & -1 &  0 &  0 &  0 &  0 &  0 &  0 &  0 &  0 &  0 &  0 &  0 &  1 &  1 &  0 &  0 &  0 &  0 &  0 & -1 & -1 \\
$\chi_{41}$ &      0 &  0 &  0 &  0 &  0 &  0 &  0 &  0 &  0 &  1 &  1 &  0 &  0 &  0 & -1 &  0 &  0 &  0 &  0 &  0 &  0 &  0 &  0 &  0 \\
$\chi_{42}$ &      0 &  0 &  0 &  0 &  0 &  1 &  0 &  0 &  0 &  0 &  0 &  0 &  0 &  0 & -1 &  0 &  0 & -1 & -1 &  0 &  0 &  0 &  0 &  0 \\
$\chi_{43}$ &      0 &  0 &  0 &  0 &  1 &  0 &  1 & -1 & -1 &  0 &  0 &  0 &  0 &  0 & -1 &  0 &  0 &  0 &  0 &  0 &  0 &  0 &  1 &  1 \\
$\chi_{44}$ &      0 &  0 & -2 &  1 &  1 &  0 & -1 &  0 &  0 &  0 &  0 &  0 &  0 &  0 &  0 &  0 &  0 &  0 &  0 &  0 &  0 &  0 &  0 &  0 \\
$\chi_{45}$ &      0 &  0 &  0 &  0 &  0 & -1 & -1 &  1 &  1 &  0 &  0 &  0 &  0 &  0 &  1 &  0 &  0 &  0 &  0 &  0 &  0 &  0 & -1 & -1 \\
$\chi_{46}$ &      0 &  0 &  0 &  0 &  0 &  0 &  0 &  0 &  0 &  0 &  0 &  0 &  0 &  0 &  0 &  0 &  0 &  0 &  0 &  0 &  0 &  0 &  0 &  0 \\
$\chi_{47}$ &      0 &  0 &  0 &  0 &  0 &  0 &  1 &  0 &  0 &  0 &  0 &  0 &  0 &  0 & -1 &  0 &  0 &  0 &  0 &  0 &  0 &  0 &  1 &  1 \\
$\chi_{48}$ &      0 &  0 &  0 &  0 & -1 &  0 &  0 &  0 &  0 &  0 &  0 &  0 &  0 &  0 &  1 &  0 &  0 &  0 &  0 &  0 &  0 &  0 &  0 &  0 \\
\hline
\end{tabular}
\caption{Character table of $Th$, Part IV.}
\label{char4}
\end{table}
\end{center}
\end{landscape}
\begin{landscape}
\begin{table}[htp]
\begin{tabular}{ccccccccccc}
\hline
$[g]$            & 1A       &  2A   & 3A   & 3B   & 3C    & 4A     & 4B     & 5A     &  6A     & 6B    \\
\hline 
$v,h$            & 0,1      & 0,1   & 1,3  & 0,1  & 2,3   & 0,1    & 7,8    & 0,1    & 5,6     & 2,3   \\
\hline
$\kappa_{m,g}$   & $240_1$  & 0     & $-6_1+18_9$ & $6_1$ & 0 & $8_4$ & 0 & 0 & 0 & 0 \\
\hline
$N_{[g]}$        & 4        & 8     & 36   & 12   & 36    & 16     & 32     & 20     & 72      & 72 \\
\hline
 & \\
  & \\
\hline
$[g]$            & 6C       &  7A   & 8A   & 8B   & 9A    & 9B     & 9C     & 10A    &  12AB   & 12C   \\
\hline 
$v,h$            & 0,1      & 0,1   & 7,8  & 13,16& 0,1   & 0,1    & 1,3    & 0,1    & 7,12    & 0,1   \\
\hline
$\kappa_{m,g}$   & $0$  & $2_1$     & $0$ & $0$ & $6_9$ & $-3_9$ & 0 & 0 & $-1_4+3_{36}$ & $-1_4$ \\
\hline
$N_{[g]}$        & 24       & 28    & 64   & 128  & 36    & 36     & 108    & 40     & 144     & 48 \\
\hline
 & \\
  & \\
\hline
$[g]$            & 12D      &  13A  & 14A  & 15AB & 18A   & 18B    & 19A    & 20A    &  21A    & 24AB  \\
\hline 
$v,h$            & 19,24    & 0,1   & 0,1  & 1,3  & 0,1   & 2,3    & 0,1    & 7,8    & 1,3    & 19,24  \\
\hline
$\kappa_{m,g}$   & $0$  & $\left(\tfrac 13\right)_1$  & $0$ & $0$ & $0$ & $0$ & $\left(\tfrac 35\right)_1$ & 0 & $\left(\tfrac 18\right)_1-\left(\tfrac 38\right)_{9}$ & $0$ \\
\hline
$N_{[g]}$        & 288      & 52    & 56   & 180  & 72    & 216    & 76    & 160     & 252     & 576 \\
\hline
 & \\
  & \\
\hline
$[g]$            & 24CD     &  27A  & 27BC & 28A  & 30AB  & 31AB   & 36A    & 36BC   &  39AB    &   \\
\hline 
$v,h$            & 37,48    & 1,3   & 1,3  & 0,1  & 2,3   & 0,1    & 0,1    & 0,1    & 1,3    &   \\
\hline
$\kappa_{m,g}$   & $0$  & $-1_9+3_{81}$  & $\left(\frac 12\right)_9-\left(\frac 32\right)_{81}$ & $1_4$ & $0$ & $-\left(\tfrac 14\right)_1$ & $2_4-3_{36}$ & $-1_4$ & $-\left(\frac 37\right)_1+\left(\frac 97\right)_{9}$ &  \\
\hline
$N_{[g]}$        & 1152     & 324   & 324  & 112  & 360    & 124    & 144    & 144     & 468     &  \\
\hline
\end{tabular}
\caption{Multipliers and theta corrections and levels. The notation $(\kappa_{m,g})_m$ indicates the addition of the theta correction $\sum_m\kappa_{m,g}\vartheta(m\tau)$ in the definition of $\calF_{[g]}(\tau)$ in \eqref{calF}, see Table 5 in \cite{Harvey} and also \Cref{remark}.}
\label{mult}
\end{table}
\end{landscape}
\newpage
\renewcommand{\arraystretch}{1.8}
\begin{table}[htp]
\begin{tabular}{|r|l|c|c|}
\hline
$[g]$ &                  & Replicable function & Expression \\
\hline
1A    & $\t^{(0)}_{[g]}$ &   $\frac{1^8}{4^8}$ &  $\frac{248x^4 + 57472x^3 + 3735552x^2 + 79691776x + 536870912}{x^4 + 16x^3}$          \\
      & $\t^{(1)}_{[g]}$ &   $\frac{1^8}{4^8}$ & $x+8-86016x^{-1}-3407872x^{-2}-33554432x^{-3}$           \\
      & $\t_{[g]}$       &   $\frac{1^{24}}{2^{24}}$  & $x+272-2^{15}x^{-1} $          \\
      \hline
2A    & $\t^{(0)}_{[g]}$ &   $\frac{2^{24}}{1^{8}\cdot 4^{16}}$ & $-8 + 256x^{-1}$           \\
      & $\t^{(1)}_{[g]}$ &   $\frac{2^{24}}{1^{8}\cdot 4^{16}}$ & $x-8$           \\
      & $\t_{[g]}$       &   $\frac{1^{24}}{2^{24}}$            & $x+16$         \\
      \hline
3A    & $\t_{[g]}$       &  $\frac{2^2\cdot 9}{1\cdot 18^2}$    & $\frac{x^6 + 9x^5 - 126x^4 + 450x^3 - 675x^2 + 405x}{x^5 - 4x^4 + 9x^3 - 15x^2 + 18x - 9}$           \\  
      \hline
3B    & $\t^{(0)}_{[g]}$ &  $\frac{3^3\cdot 4}{1\cdot 12^3}$    & $\frac{5x^4 + 34x^3 - 240x^2 + 448x - 256}{x^4 - 2x^3}$          \\
      & $\t^{(1)}_{[g]}$ &  $\frac{3^3\cdot 4}{1\cdot 12^3}$    &   $x - 1 + 24x^{-1} + 32x^{-2} - 128x^{-3}$         \\
      & $\t_{[g]}$       &  $\frac{2^3\cdot 3^9}{1^3\cdot 6^9}$ & $x + 2 + 64x^{-1}$           \\ 
      \hline 
3C    & $\t_{[g]}$       &  $\frac{2^2\cdot 9}{1\cdot 18^2}$    & $\frac{x^7 - 9x^6 + 9x^5 + 171x^4 - 837x^3 + 1701x^2 - 1701x + 729}{x^6 - 4x^5 + 9x^4 - 15x^3 + 18x^2 - 9x}$           \\  
            \hline 
4A    & $\t^{(0)}_{[g]}$ &      0              &    8        \\
      & $\t^{(1)}_{[g]}$ & $\frac{1^8}{4^8}$   &    $x+8$    \\
      & $\t_{[g]}$       & $\frac{1^8}{4^8}$   &    $x+16$   \\  
            \hline 
4B    & $\t^{(0)}_{[g]}$ &      0              &     0      \\
      & $\t^{(1)}_{[g]}$ & $\frac{2^{12}}{4^{12}}$  &  $x$          \\
      & $\t_{[g]}$       & $\frac{2^{12}}{4^{12}}$  &  $x$          \\ 
      \hline 
5A    & $\t_{[g]}$       &  $\frac{1^3\cdot 5}{2\cdot 10^3}$    & $\frac{x^3 + 10x^2 + 36x + 80}{x^2 + 9x + 20}$   \\  
            \hline
\end{tabular}
\caption{Relations to replicable functions I. In the table's third column we use the short hand $d^r$ for the expression $\eta(d\tau)^r$. Thus, for example the expression $\frac{1^8}{4^8}$ stands for $\eta(\tau)^8\eta(4\tau)^{-8}$. The rational function in the fourth column means that the replicable function from the third column plugged into it gives the functions indicated in the second column.}
\label{repl}
\end{table}

\begin{table}[htp]
\begin{tabular}{|r|l|c|c|}
\hline
$[g]$ &                  & Replicable function & Expression \\
\hline
6A    & $\t_{[g]}$       &   $\frac{2^{2}\cdot 9}{1\cdot 18^{2}}$  & $\frac{x^4 - 18x^2 + 36x - 27}{x^3 - 3x^2 + 3x}$        \\
      \hline
6B    & $\t_{[g]}$       &   $\frac{2^{2}\cdot 9}{1\cdot 18^{2}}$  & $\frac{x^3 - 6x^2 + 9x}{x^2 - 3x + 3}$         \\
      \hline
6C    & $\t^{(0)}_{[g]}$ &  $\frac{4^4\cdot 6^2}{2^2\cdot 12^4}$    & $\frac{x + 3}{x - 1} $          \\
      & $\t^{(1)}_{[g]}$ &  $\frac{4^4\cdot 6^2}{2^2\cdot 12^4}$    &   $ x$        \\
      & $\t_{[g]}$       &  $\frac{2^3\cdot 3^9}{1^3\cdot 6^9}$ & $x-2$          \\ 
      \hline 
8A    & $\t^{(0)}_{[g]}$ &      0              &    0        \\
      & $\t^{(1)}_{[g]}$ & $\frac{2^4}{8^4}$   &    $x$    \\
      & $\t_{[g]}$       & $\frac{2^4}{8^4}$   &    $x$    \\
            \hline 
8B    & $\t^{(0)}_{[g]}$ &      0              &    0        \\
      & $\t^{(1)}_{[g]}$ & $\frac{4^6}{8^6}$   &    $x$    \\
      & $\t_{[g]}$       & $\frac{4^6}{8^6}$   &    $x$   \\  
            \hline 
9A    & $\t_{[g]}$       & $\frac{2^2\cdot 9}{1\cdot 18^2}$  &  $\frac{x^3 + 3x^2 - 15x + 27}{x^2 - x}$\\ 
      \hline 
9B    & $\t_{[g]}$       &  $\frac{2^2\cdot 9}{1\cdot 18^2}$    & $\frac{x^4 - 6x^3 + 12x^2 - 18x + 27}{x^3 - x^2}$   \\ 
\hline  
10A    & $\t^{(0)}_{[g]}$ &  $\frac{2^4\cdot 10^4}{1^3\cdot 4\cdot 5\cdot 20^3}$ &    $2-4x^{-1}$        \\
      & $\t^{(1)}_{[g]}$ & $\frac{1^2\cdot 10^5}{2\cdot 4\cdot 5^2\cdot 20^3}$   &    $x+2$    \\
      & $\t_{[g]}$       & $\frac{2^4\cdot 5^2}{1^2\cdot 10^4}$ &    $x-5x^{-1}$    \\ 
            \hline
12AB  & $\t^{(0)}_{[g]}$       &  $\frac{1\cdot 4\cdot 18}{2\cdot 9 \cdot 36}$    & $\frac{2x}{x-3}$   \\ 
\hline  
12C    & $\t^{(0)}_{[g]}$ &  0 &    -1        \\
      & $\t^{(1)}_{[g]}$ & $\frac{4^4\cdot 6^2}{2^2\cdot 12^4}$  &    $x$    \\
      & $\t_{[g]}$       & $\frac{4^4\cdot 6^2}{2^2\cdot 12^4}$ &    $x-1$    \\ 
            \hline
12D    & $\t^{(0)}_{[g]}$ &  0 &    0        \\
      & $\t^{(1)}_{[g]}$ & $\frac{6^4}{12^4}$  &    $x$    \\
      & $\t_{[g]}$       & $\frac{6^4}{12^4}$ &    $x$    \\ 
            \hline
\end{tabular}
\caption{Relations to replicable functions II.}
\label{eplii}
\end{table}

\begin{table}[htp]
\begin{tabular}{|r|l|c|c|}
\hline
$[g]$ &                  & Replicable function & Expression \\
\hline
14A    & $\t^{(0)}_{[g]}$ &  $\frac{2^3\cdot 14^3}{1\cdot 4^2\cdot 7\cdot 28^2}$    & $-1+4x^{-1} $          \\
      & $\t^{(1)}_{[g]}$ &  $\frac{2^3\cdot 14^3}{1\cdot 4^2\cdot 7\cdot 28^2}$    &   $ x-1$        \\
      & $\t_{[g]}$       &  $\frac{1^3\cdot 7^3}{2^3\cdot 14^3}$ & $x+2$          \\ 
      \hline 
18A    & $\t^{(0)}_{[g]}$ & $\frac{4\cdot 9}{1\cdot 36}$  &    $\frac{x-1}{x+1}$        \\
      & $\t^{(1)}_{[g]}$ & $\frac{2\cdot 12^4\cdot 18}{4^2\cdot 6^2\cdot 36^2}$   &    $x$    \\
      & $\t_{[g]}$       & $\frac{2^2\cdot 9}{1\cdot 18^2}$   &    $x-3x^{-1}$    \\
            \hline 
20A    & $\t^{(0)}_{[g]}$ &      0              &    0        \\
      & $\t^{(1)}_{[g]}$ & $\frac{2^2\cdot 10^2}{4^2\cdot 20^2}$   &    $x$    \\
      & $\t_{[g]}$       & $\frac{2^2\cdot 10^2}{4^2\cdot 20^2}$   &    $x$   \\  
            \hline 
24AB    & $\t_{[g]}$       & 0  &  0\\ 
      \hline 
24CD   & $\t^{(0)}_{[g]}$ &  0 &    0        \\
      & $\t^{(1)}_{[g]}$ & $\frac{12^2}{24^2}$   &    $x$    \\
      & $\t_{[g]}$       & $\frac{12^2}{24^2}$ &    $x$    \\ 
            \hline
28A  & $\t^{(0)}_{[g]}$       &  0    & 1   \\ 
     & $\t^{(1)}_{[g]}$ & $\frac{1\cdot 7}{4\cdot 28}$   &    $x+1$    \\
      & $\t_{[g]}$       & $\frac{1\cdot 7}{4\cdot 28}$ &    $x+2$    \\ 
\hline  
36A    & $\t^{(0)}_{[g]}$ &  $\frac{2^5\cdot 3\cdot 12\cdot 18}{1^2\cdot 4^2\cdot 6^2\cdot 9\cdot 36}$ &    $-1+6x^{-1}$        \\
      & $\t^{(1)}_{[g]}$ & $\frac{2\cdot 12^4\cdot 18}{4^2\cdot 6^2\cdot 36^2}$  &    $x$    \\
      & $\t_{[g]}$       &  &    $\t^{(0)}_{[g]}+t^{(1)}_{[g]}$    \\ 
            \hline
36BC    & $\t^{(0)}_{[g]}$ &  0 &    -1        \\
      & $\t^{(1)}_{[g]}$ & $\frac{2\cdot 12^4\cdot 18}{4^2\cdot 6^2\cdot 36^2}$  &    $x$    \\
      & $\t_{[g]}$       & $\frac{1\cdot 12\cdot 18^3}{4\cdot 6\cdot 9\cdot 36^2}$ &    $x$    \\ 
            \hline
\end{tabular}
\caption{Relations to replicable functions III.}
\label{repliii}
\end{table}

\section{Congruences}\label{AppCongruences}
Here we give the linear relations and congruences for the McKay--Thompson series of Thompson moonshine. In the relations below the symbol $[g]$ represents the McKay--Thompson series $\mathcal T_{[g]},$ and the symbol $[\vartheta_{m^2}]$ represents $\vartheta(m^2 \tau).$ \\

\subsection{Linear Relations}\label{TblLinearRelations}
\begin{align*}
0&=3[9A]-[3A]-[3C]-[3B]\\
&=[9B]-[9A]+9[\vartheta_9]\\
&=3[18A]-[6A]-[6B]-[6C]\\
&=[27BC]-[27A]-3/2[\vartheta_9]-9/2[\vartheta_{81}]\\
&=[36BC]-[36A]+3[\vartheta_4]+3[\vartheta_{36}]
\end{align*}\\

\subsection{Congruences}\label{secCongruences}\mbox{}\\

{\bf p=31:}
\begin{align*}
0&\equiv [1A]-[31AB] &\pmod{31}
\end{align*}

{\bf p=19:}
\begin{align*}
0&\equiv[1A]-[19A] & \pmod{19}
\end{align*}

{\bf p=13:}
\begin{align*}
0&\equiv[1A] -[13A]& \pmod{13}\\
&\equiv[3A]-[39AB] & \pmod{13}
\end{align*}

{\bf p=7:}
\begin{align*}
0&\equiv[1A]-[7A] && \pmod{7^2}\\
&\equiv[2A]-[14A] && \pmod{7}\\
&\equiv[3A]-[21A] && \pmod{7}\\
&\equiv[4A]-[28A] && \pmod{7}
\end{align*}

{\bf p=5:}
\begin{align*}
0&\equiv [1A]-[5A] &&\pmod{5^2}\\
&\equiv[2A]-[10A] &&\pmod{5}\\
&\equiv[3C]-[15AB] &&\pmod{5}\\
&\equiv[4B]-[20A] &&\pmod{5}\\
&\equiv[6A]-[30AB] &&\pmod{5}
\end{align*}

{\bf p=3:}
\begin{align*}
0&\equiv [1A] - [3A] +9 [\vartheta_9] && \pmod{3^3}\\
&\equiv 28[1A] - 27 [3A] - [3B] && \pmod{3^7}\\
&\equiv 53[1A] -27 [3A] -53[3B] + 27[3C] + 3^6 [\vartheta_9]&& \pmod{3^8}\\
&\equiv [2A] - [6A] && \pmod{3}\\
&\equiv 2[2A] - [6A] - [6B] && \pmod{3^2}\\
&\equiv 2[2A] -3[6A] + [6C] && \pmod{3^3}\\
&\equiv 760[1A] - 864[3A] +212 [3B] +621 [3C] - 3^6 [9C]\\
& \ \ \ \ \ \ \ \ - 49\cdot3^6 [\vartheta_9] - 3^9 [\vartheta_{81}] && \pmod{3^{10}}\\
&\equiv [4A] - [12AB] && \pmod{3}\\
&\equiv 4[4A] - 3[12AB] - [12C] - 9[\vartheta_4] - 18[\vartheta_{36}] && \pmod{3^3}\\
&\equiv [4B] - [12D] && \pmod{3}\\
&\equiv [5A] - [15AB] &&\pmod{3}\\
&\equiv 16[2A] - 15[6A] +8[6C] - 9[18B] && \pmod{3^4}\\
&\equiv [7A] - [21A] && \pmod{3}\\
&\equiv [8A] - [24AB] && \pmod{3}\\
&\equiv [8B] - [24CD] && \pmod{3}\\
&\equiv [3C] +2[9C] - 3[27A] - 21[\vartheta_9] && \pmod{3^3}\\
&\equiv [10A] - [30AB] && \pmod{3}\\
&\equiv [12C] - [36A] && \pmod{3}\\
&\equiv [13A] - [39AB] && \pmod{3}
\end{align*}

{\bf p=2:}
\begin{align*}
0 &\equiv [1A] - [2A] - 2^8 [\vartheta_4] && \pmod{2^{12}}\\
&\equiv [1A] + 15 [2A] - 16 [4A] && \pmod{2^{13}}\\
&\equiv 11[1A]+5[2A]-144[4A]+2^7[4B] -3\cdot2^9[\vartheta_4] && \pmod{2^{16}}\\
&\equiv [3C] - [6A] && \pmod{2^3}\\
&\equiv [3A] - [6B] - 8 [\vartheta_4] - 8 [\vartheta_36] && \pmod{2^5}\\
&\equiv [3B] - [6C] +4[\vartheta_1] && \pmod{2^3}\\
&\equiv [1A] -17 [2A] -16 [4A] + -96 [4B] + 2^7 [8A] -2^8[\vartheta_4] && \pmod{2^{13}}\\
&\equiv 3[1A] + 13 [2A] -112 [4A] + 480 [4B] -7\cdot2^7 [8A] \\
&\ \ \ \ \ \ \ \ + 2^9 [8B] + 2^8 [\vartheta_4] && \pmod{2^{15}}
\end{align*}
\begin{align*}
0&\equiv [5A] - [10A] +4 [\vartheta_1] && \pmod{2^3}\\
&\equiv [3A] -5 [6B] + 4 [12AB] + 80 [\vartheta_4] + 16 [\vartheta_{36}] && \pmod{2^7}\\
&\equiv 5[3B] + [6C] + 2 [12C] + 8 [\vartheta_1] && \pmod{2^4}\\
&\equiv 3 [3C] -5 [6A] + 2 [12D] && \pmod{2^5}\\
&\equiv [7A] - [14A] +4 [\vartheta_1] && \pmod{2^3}\\
&\equiv [9C] - [18B] && \pmod{2^2}\\
&\equiv 3 [5A] + 3 [10A] + 2 [20A] && \pmod{2^4}\\
&\equiv 3 [3A] + [24AB] + 5 [\vartheta_4] + [\vartheta_{36}] && \pmod{2^3}\\
&\equiv [12D] - [24CD] && \pmod{2^3}\\
&\equiv 5 [7A] + [14A] + 2 [28A] && \pmod{2^4}\\
&\equiv [15AB] + [30AB] && \pmod{2^2}\\
&\equiv [3C] + [6A] + 2 [12C] + 2 [24AB] + 2 [36A] + 6 [\vartheta_4] + 8 [\vartheta_9] + 6 [\vartheta_36] && \pmod{2^4}
\end{align*}\\

\noindent Additionally, for each $g$ of odd order we have the congruence
\[0\equiv [g]-\alpha_{[g]}(0)\pmod{2},\] 
and of course 
\[1\equiv [\theta_{m^2}]\pmod{2}.\] 

\newpage

\end{document}